\documentclass[11pt,preprint]{article}

\usepackage{geometry}
 \geometry{
 letterpaper,
 total={160mm,220mm},
 left=25mm,
 top=25mm,
 }

\usepackage{authblk}

\usepackage{framed}
\usepackage{amsthm}

\usepackage{mathtools}
\usepackage{psfrag}
\usepackage{amssymb}
\usepackage{amsmath}
\usepackage{graphicx}
\usepackage{xcolor}
\usepackage[color=green!40]{todonotes}
\usepackage{enumitem}
\usepackage{pifont}

\usepackage{hyperref}

\newtheorem{theorem}{Theorem}[section]

\newtheorem{prob}[theorem]{Problem}
\newtheorem{remark}[theorem]{Remark}
\newtheorem*{vis}{Visibility condition}
 
\newtheorem{alg}{Algorithm}

\theoremstyle{definition}
\newtheorem{definition}[theorem]{Definition}

\numberwithin{equation}{section}
\numberwithin{figure}{section}
\numberwithin{theorem}{section}

\DeclareMathOperator{\sinc}{sinc}

\newcommand{\Hs}{\mathbb X}
\newcommand{\Ds}{\mathbb Y}
\newcommand{\mB}{\mathrm B}

\newcommand{\R}{\mathbb R}

\newcommand\edot{\,\cdot\,}

\newcommand{\Fo}{\mathcal  F}
\newcommand{\Ko}{\mathbf K}
\newcommand{\Lo}{\mathbf L}
\newcommand{\No}{\mathbf N}
\newcommand{\Io}{\mathbf I}
\newcommand{\To}{\mathbf T}

\newcommand{\supp}{\mbox{supp}}

\newcommand{\Hr}{ \mathcal H}
\newcommand{\Om}{\Omega}
\newcommand{\cT}{ \mathbb T}

\newcommand{\eps}{\epsilon}
\newcommand{\coloneqq}{:=}
\newcommand{\sg}{\sigma}

\newcommand\abs[1]{\left\vert#1\right\vert}
\newcommand\sabs[1]{\vert#1\vert}
\newcommand\norm[1]{\left\Vert#1\right\Vert}

\newcommand\set[1]{\left\{#1\right\}}
\newcommand\kl[1]{\left(#1\right)}

\newcommand{\fnum}{{\tt f}}

\newcommand{\gnum}{{\tt g}}
\newcommand{\Lnum}{\boldsymbol{\tt{L}}}
\newcommand{\Wnum}{\boldsymbol{\tt{W}}}
\newcommand{\Num}{\boldsymbol{\tt{R}}}
\newcommand{\Dnum}{\boldsymbol{\tt{D}}}
\newcommand{\interior}{\operatorname{Int}}

\newcommand{\WF}{ \mbox{WF}}
\newcommand{\mR}{ \mathcal R}
\newcommand{\mD}{ \mathcal D}
\newcommand{\pd}{{\partial}}

\numberwithin{theorem}{section}
\numberwithin{figure}{section}
\numberwithin{equation}{section}
\setlength{\parskip}{0.5em} \setlength{\parindent}{0em}

\title{Analysis of Iterative Methods in Photoacoustic Tomography\\ with Variable Sound Speed}

\author{Markus Haltmeier} 
\affil{Department of Mathematics, University of Innsbruck\\
Technikestra{\ss}e 13, A-6020 Innsbruck, Austria\\
E-mail: {\tt markus.haltmeier@uibk.ac.at}}

\author{Linh V. Nguyen} 
\affil{Department of Mathematics, University of Idaho\\
875 Perimeter Dr, Moscow, ID 83844, US\\
E-Mail: {\tt lnguyen@uidaho.edu}}

\date{}

\begin{document}
\maketitle

\begin{abstract}
In this article, we revisit iterative methods for solving the inverse problem of
photoacoustic tomography in free space. Recently, there have been interesting developments on
explicit formulations of the adjoint operator, demonstrating that iterative methods is an attractive
choice for photoacoustic image reconstruction. In this work, we propose several modifications of current formulations
 of the adjoint operator which help  speed up the convergence  and yield improved error estimates. We establish
  a stability analysis and show that, with our choices of the adjoint operator, the iterative methods  can achieve a linear rate of convergence, in the $L^2$-norm as well as in the $H^1$-norm.
  In addition, we analyze the normal operator from the
  microlocal analysis point of view. This gives insight into the convergence speed of the iterative methods and choosing proper weights for the mapping spaces. Finally, we present numerical results using various iterative reconstruction methods for full as well as limited view data. Our results demonstrate that Nesterov's fast gradient and the CG  methods converge faster than Landweber's and iterative time reversal methods in the visible as well as the invisible case.

\bigskip\noindent\textbf{Keywords:}
Photoacoustic tomography, variable sound speed, iterative regularization, adjoint operator,  Landweber 's method, Nesterov's method, CG method, visibility  condition, invisibility condition, image reconstruction.

\bigskip\noindent\textbf{AMS Subject Classification:}
35R30;
92C55;  	
65F10;
35A18;
74J05.
\end{abstract}

\section{Introduction}

Photoacoustic tomography (PAT) is an novel coupled-physics method for non-invasive imaging. It combines the high contrast of optical imaging  with the good resolution of ultrasound tomography. The biological object of interest is scanned with a laser light pulse. The photoelastic effect induces an acoustic pressure wave propagating in space. One measures the  pressure on an observation surface.  The aim of PAT is to recover the initial pressure inside the tissue from the measured data. This quantity contains helpful internal information of the object and is the image to be reconstructed.

The mathematical model for PAT is the acoustic wave equation
\begin{eqnarray} \label{E:PAT}
\left\{\begin{array}{l} c^{-2}(x) \, p_{tt}(x,t) -  \Delta p(x,t) =0, \quad (x,t) \in \R^d \times \R_+, \\[6 pt] p(x,0) =f(x), \quad p_t(x,0) =0, \quad x \in \R^d, \end{array} \right.
\end{eqnarray}
where $c \colon \R^d \to \R$ is the sound speed and $f \colon \R^d \to \R$ the initial pressure. Let us denote by $S$ the observation surface. We will assume that $S$ is a closed subset of $\partial \Om$ with nonempty interior $\interior(S)$. Here, $\Om$ is an open subset of $\R^d$ that contains the support of $f$. The mathematical problem of PAT is to invert the map $\Lo\colon f \mapsto g := p|_{S \times (0,T)}$. We will refer to this problem as the inverse source problem of PAT.

For  our conveniences, let us fix several geometric conventions. We will always assume that the sound speed $c$ is smooth and bounded from below by a positive constant. The space $\R^d$ is considered as a Riemannian manifold with the metric $c^{-2}(x) \, dx^2$ and $\Om$ is assumed to be strictly convex with respect to this metric. Then, all the geodesic rays originating inside $\Om$ intersect the boundary $\pd \Om$ at most once. We say that the speed $c$ is nontrapping if all such geodesic rays intersect with $\pd \Om$; otherwise, the  speed $c$ is called trapping. We will denote by $\cT^* \Om$ the cotangent bundle of $\Om$. It can be identified with $\Om \times  \R^d$. Also, $\cT^* \Om \setminus 0$ is the cotangent bundle of $\Om$ minus the zero section, which can be identified with $\Om \times (\R^d \setminus \set{0})$. A set $V \subset \cT^* \Om \setminus 0$ is said to be conic if $(x,\xi) \in V$ implies $(x,\alpha \xi) \in V$ for all $\alpha>0$.

Let us assume that $\supp(f) \subset \Om_0$, where $\Om_0 \Subset \Om$. Then, $\Lo= \Lo_+ + \Lo_-$, where $\Lo_\pm$ are Fourier integral operators (FIOs) of order zero (see \cite[Proposition 3]{US}). This fact, in particular, implies that $\Lo$ is a well-defined linear bounded operator from $H^{s_1}(\Om_0)$ to $H^{s_2}(S \times [0,T])$ for  all $s_1 \geq s_2$. In this article, we will identify the correct mapping spaces for $\Lo$ in order to stabilize the inverse problem of PAT and design proper algorithms. For the sake of simplicity, we will assume that $\Lo$ is injective. Necessary and sufficient conditions for this assumption to hold can be found in \cite[Proposition 2]{US}.

An essential feature of an inverse problem is its well-posedness or ill-posedness (see \cite{engl1996regularization} and Section~\ref{S:IterativeH}).
The inversion of a linear operator $\To$ is called well-posed if the ratio $\|\To x\|/\|x\|$ is bounded from below by a positive constant and ill-posed otherwise. The inverse problem of PAT can be either well-posed or ill-posed, as can be seen in the following two scenarios:
\begin{enumerate}[label=(\roman*)]
\item There is a closed subset $S_0 \subset \partial \Om$ such that $S_0 \subset \interior(S)$ and the following condition holds: for any element $(x,\xi) \in \cT^*\Om_0 \setminus 0$, at least one of the unit speed geodesic rays originating from $x$ at time $t=0$ along the  direction of $\pm \xi$ intersects with $S_0$ at a time $t<T$. This is the so-called {\bf visibility condition} \cite{LQ00,XWAK08,HKN,nguyen2011singularities,US}.
\item There is an open conic set $V \subset \cT^*\Om_0 \setminus 0$  such that for all $(x,\xi) \in V$ none of the unit speed geodesic rays originating from $x$ at time $t=0$ along the direction of $\pm \xi$ intersects with $S$ at a time $t \leq T$. This is called the \emph{\bf invisibility condition}.
\end{enumerate}

The visibility and invisibility conditions are almost, but not exactly, complementary. Under the visibility condition, it is shown in \cite[Theorem 3]{US} that the inversion of $\Lo\colon H^1_0(\Om_0) \to H^1(S \times [0,T])$ is well-posed. In a similar manner, one can the same result for $\Lo\colon L^2(\Om_0) \to L^2(S \times [0,T])$. On the other hand, when the invisibility condition holds, the inversion of $\Lo\colon H^{s_1}(\Om_0) \to H^{s_2} (S \times [0,T])$ is ill-posed for all $s_1,s_2$ (see \cite{nguyen2011singularities}). In this article, we solve the inverse source problem of PAT for both well-posed and ill-posed settings by iterative (regularization) methods. They  include Landweber's, Nesterov's and the conjugate gradient (CG)  methods.  These iterative methods are theoretically convergent to the exact solution in the absence  of noise. However, the convergence speed as well as error estimates with respect to the noise level are significantly different for the ill-posed and the well-posed settings.

There exist several methods to solve  blem of PAT such as explicit
inversion formulas \cite{FPR,Kun07,FHR,Kun07,IPI,Halt2d,Halt-Inv,natterer2012photo,Pal-Uniform}, series solutions \cite{kunyansky2007series,agranovsky2007uniqueness}, time reversal \cite{FPR,HKN,HTRE,US}, and quasi-reversibility \cite{clason2008quasi}. Reviews on these
methods can be found in \cite{HKN,kuchment2014radon,kuchment2008mathematics,RosNtzRaz13}.\footnote{Other setups of PAT that use integrating detectors have been studied in, e.g., \cite{haltmeier2004thermo,paltauf2005thermo,bur6086thermo,zangerl2009circular}. However, we do not consider these setups in the present article.} To the best of our knowledge, the series solutions, time reversal method, and quasi-reversibility methods only apply to the case when $S$ is a closed surface.  Inversion formulas only work for certain closed or flat observation surfaces. 

Let us mention that algebraic reconstruction methods have been frequently used for photoacoustic tomography and achieve high quality images (see, e.g., \cite{DeaBueNtzRaz12,huang2013full,modgil2010image,wang2011imaging,wang2011photoacoustic,wang2012investigation}). For example, in \cite{huang2013full}, the full-wave iterative image reconstruction for photoacoustic tomography with inhomogeneous media was successfully implement and tested with synthetic as well as experimental data. The majority of works on the algebraic methods employ the discretise-then-adjoint approach. In this article we, instead, follow the spirit of \cite{belhachmi2016direct,arridge2016adjoint}, where the adjoint operator was explicitly described in the continuous form. Our goal is to systematically analyze the convergence behavior of iterative methods in a continuous framework.

Our approach  is most closely related to  \cite{belhachmi2016direct}, where Landweber's method was proposed to solve the inverse source problem for PAT. However, we make several changes in order to preserve the well-posedness (when it holds) of the problem and speeds up the convergence speed. For example, instead of considering $\Lo$ as a mapping from $H^1_0(\Om_0)$ to $L^2(S \times [0,T])$, we consider $\Lo$  as a mapping between two Sobolev spaces of the same order  (see Section~\ref{S:Adjoint}). This change not only  preserves the well-posedness of the inverse problem under the visibility condition but also  makes it simpler to compute the adjoint operator and further speed up the convergence. Additionally, by introducing the weighted norm on the image space, we supply the flexibility to the iterative method.  Our choices of  mapping spaces are most similar to those in \cite{arridge2016adjoint}, where  acoustic measurements on an open set are considered (see also Section~\ref{S:Open}, where we derive theoretical results for this setup).  Our adjoint operator is slightly different from that in \cite{arridge2016adjoint} (although they agree on the $C_0^\infty$ framework). Moreover, in addition to the $L^2$-type product (as considered in \cite{arridge2016adjoint}), we also consider the adjoint in the $H^1$-type product. The analysis for this additional case makes it easier to compare the proposed iterative methods with the well-known Neumann series (i.e., iterative time-reversal) method proposed in \cite{US}.

Let us note that our established  iterative algorithms converge linearly for the partial data problem under the visibility condition. This convergence rate, to the best of our knowledge, has not been obtained by any previous  method. Comparable  results, for  a different setup of PAT where the acoustic wave is contained in a bounded domain, have recently been obtained in \cite{Acosta,nguyen2016dissipative,StefanovYang2}.

The article is organized as follows. In Section~\ref{S:Iterative}, we briefly review several iterative methods that  will be used for solving the inverse  source problem of PAT.  As we will see, the knowledge of the adjoint operator is crucial for those iterative methods. In Section~\ref{S:Adjoint} we derive and analyze the adjoint operator $\Lo^*$ of $\Lo$.  We will revisit the PAT with open observation domain in Subsection~\ref{S:Open}.  In Section~\ref{S:Num}, we  describe our numerical implementations and present  results in various scenarios including the full and partial data cases. 

\section{Iterative methods for solving linear equations} \label{S:Iterative}
In this section, we briefly review several common iterative methods to solve linear equations in Hilbert spaces; and how to apply them to the inverse source problem of PAT. 

\subsection{Iterative methods in Hilbert spaces} \label{S:IterativeH} 

Let $\To\colon X \to Y$ be a linear operator mapping between two Hilbert spaces $X$ and $Y$.  We denote by $\mR(\To)$ its range. Assuming that $\To$ is injective, we are interested in inverting $\To$. That is, we want to solve the following problem:
\begin{prob}\label{P:exact} Given $y \in \mR(\To)$, find the solution $x^* \in X$ of the equation $\To x = y$.
\end{prob}

Problem~\ref{P:exact} is said to be well-posed if the inverse $\To^{-1} \colon \mR(\To) \to X$ is bounded and ill-posed  otherwise. It can be seen that Problem~\ref{P:exact} is well-posed if and only if $\To$ is bounded from below, i.e., 
\begin{equation*}
	b  \coloneqq
	\inf\limits_{x \neq 0} \frac{\|\To x\|_{Y}}{\|x\|_{X}}
	= \sqrt{\inf\limits_{x \neq 0}
	\frac{\left< \To^* \To x, x \right>_{X}}{\|x\|^2_{X}} }
 \,	>0.
\end{equation*}

There are several methods to solve Problems~\ref{P:exact}. In this article, we will make use of three methods: Landweber's (see, e.g., \cite[Chapter 6]{engl1996regularization}), Nesterov's (see  \cite{nesterov1983method,nesterov2004introductory}), and the conjugate gradient (CG)  methods (see \cite{hestenes1952methods,hayes1952iterative,daniel1967conjugate,kammerer1972convergence,hanke1995conjugate,fortuna1979some,nevanlinna2012convergence,ernst2000minimal,axelsson2002rate,glowinski2004iterative,mardal2011preconditioning}). As we will see, the knowledge of the adjoint operator is essential for all of these iterative methods.

\begin{itemize}
\item {\bf Landweber's method.} The Landweber's method is simply  the gradient descent method for  minimizing the residual functional $\frac{1}{2} \|\To x- y\|^2$.
 It reads as follows:
\begin{equation*}  \label{E:iterate} x_{k+1}= x_k - \gamma \To^* ( \To x_k - y), \quad k \geq 0,\end{equation*}
where $0 < \gamma  < 2 / \norm{\To}^2$ is a fix relaxation. If the problem is well-posed, the Landweber's method converges linearly. Namely, $\|x_{k}- x^*\|_X \leq  \|\Io - \gamma \To^* \To \|^k \, \|x_0 -x^*\|_X.$

\item Nesterov's fast gradient method. Let $L \geq \|\To\|$ and $\mu \leq b$. The Nesterov's algorithm reads as follows (see \cite[page 80]{nesterov2004introductory}, and also the original paper \cite{nesterov1983method}) :
\begin{enumerate}
\item Initialization: $x_0=z_0$, $\alpha_0 \in [\sqrt{\mu/L},1)$, and $q= \frac{\mu}{L}$.
\item While (not stop) do
\begin{itemize}
\item $x_{k+1} = z_k - \frac{1}{L}  \To^* (\To z_k -y)$
\item compute $\alpha_{k+1} \in (0,1)$ from the equation $\alpha_{k+1}= (1- \alpha_{k+1}) \alpha_k^2 + q \alpha_{k+1}$ 
\item set $\beta_k = \frac{\alpha_k(1- \alpha_k)}{\alpha_k^2 + \alpha_{k+1}}$
\item $z_{k+1}= x_{k+1} + \beta_k (x_{k+1} -x_k)$
\end{itemize}
\end{enumerate}

\noindent Then \cite[Theorem 2.2.3]{nesterov2004introductory} gives  
$$\|\To x_k - y\|^2_Y \leq \min \left \{\left(1- \sqrt{\frac{\mu}{L}} \right)^k, \frac{4L}{(2 \sqrt{L} + k \sqrt{\gamma})^2} \right\} 
\bigl( 
\|\To x_0 - y\|_Y^2 + \gamma \|x_0 -x^*\|^2_X \bigr),$$ where $\gamma = \frac{\alpha_0(\alpha_0 L - \mu)}{1- \alpha_0}$. If the problem is well-posed and $\mu>0$, $x_k$ converges linearly to $x^*$ with the estimate 
$$\|x_k - x^* \|_X \leq \frac{1}{b} \left(1- \sqrt{\frac{\mu}{L}} \right)^k 
\bigl( \|\To x_0 - y\|_Y + \gamma \|x_0 -x^*\|_X \bigr) \,.$$
If $\mu \approx b$, this convergence rate is better than that of Landweber's method (see \cite{nesterov2004introductory}). When the problem is ill-posed ($b=0$) or $b$ is unknown, one may choose $\mu= 0$ (then the calculation of $\beta_k$ can be simplified, e.g., \cite{beck2009fast}). Although the resulting algorithm is not proven to have a linear convergence rate, it still converges faster than the Landweber's method in our numerical simulations (see Section~\ref{S:Num}).

\item{\bf Conjugate gradient (CG) method.} We will use the CG method to solve the normal equation $\To^* \To x = \To^* x$ (see, e.g., \cite[Algorithm 2.3]{hanke1995conjugate}):
\begin{enumerate}
\item Initialization: $k=0$, $r_0= y - \To x_0$, $d_0=\To^* r_0$
\item While (not stop) do
\begin{itemize}
\item $\alpha_k =\| \To^* r_k\|_X^2/ \|\To d_k\|_Y^2$
\item $x_{k+1} = x_k  + \alpha_k \, d_k$
\item $r_{k+1} =  r_k -  \alpha_k \, \To d_k$
\item $\beta_k = \|\To^* r_{k+1} \|_X^2/ \|\To^* r_k\|_X^2$
\item $d_{k+1} = \To^* r_{k+1} + \beta_k \, d_k$
\end{itemize}

\end{enumerate}
The iterates $x_k$ converges to the solution $x^*$ of Problem~\ref{P:exact}.  When the problem is well-posed, the CG method converges linearly with better convergence rate than Landweber's method. Namely, (see, e.g., \cite{daniel1967conjugate})
$$\|x_{k}- x\|_X \leq 2\,  \frac{\|\To\|}{b} \, \Big(\frac{\|\To\| -b}{\|\To\|+b}\Big)^k\,\|x_0 -x\|_X.$$
Moreover, assume that $\To^* \To = \alpha \Io + \Ko$ where $\Ko$ is a compact operator. Then, the CG method is known to converge superlinearly (see, e.g., \cite{herzog2015superlinear}).

\end{itemize}

\subsection{Iterative methods for PAT}  In this article, we will study the above three iterative methods for the inverse  source problem of PAT. To that end, we need to establish the proper form of the adjoint operator (or equivalently, the mapping spaces for $\Lo$), which is done in Section~\ref{S:Adjoint}. Our goal for the adjoint operator is two-fold: it should be relatively simple to implement and
to speed up the convergence. In particular, with our choice of mapping spaces for $\Lo$, the inverse problem of PAT is well-posed under the visibility condition. Therefore, the linear convergence for the Landweber's and CG method is guaranteed for the exact problem (the same for the Nesterov's method if $\mu>0$). This convergence rate, for partial data problem of PAT, has not been obtained before for any other methods. In Section~\ref{S:Num}, we will implement the iterative methods for PAT. We will also compare them with the iterative time reversal method proposed in \cite{US} (see also \cite{QiaSteUhlZha11}).

\section{The adjoint operator for PAT} \label{S:Adjoint}

Let us recall that $\Lo\colon f \mapsto g:=p|_{S \times (0,T)}$, where $p$ is defined by the acoustic wave equation (\ref{E:PAT})
and $S$ is a closed subset of $\pd \Om$ with nonempty interior. Our goal is to invert $\Lo$ using the iterative methods introduced in the previous section.
 It is crucial to analyze the adjoint operator $\Lo^*$ of $\Lo$. To that end, we first need to identify the correct mapping spaces
 for $\Lo$. We, indeed, will consider two realizations, $\Lo_0$ and $\Lo_1$, of $\Lo$ corresponding
 to two different choices of the mapping spaces. To make the presentation rigorous, we will work with several Sobolev spaces and their dual. In particular, we will need to deal with the spaces $L^2(O)$, $H^1(O)$, $H_0^1(O)$, and $H^{-1}(O)$ (the dual of $H_0^1(O)$), where $O$ is an open subset in $\R^d$. The reader is referred to \cite{lions2012non, Evb} for the definition and properties of these spaces. As usual, we will identify the space $L^2(O)$ with its dual $(L^2(O))'$ (see, e.g., \cite[page 31]{lions2012non} or \cite[page 299]{Evb}).

We first recall our assumption $\supp(f) \subset \Om_0$, where $\Om_0 \Subset \Om$. Let us denote
\begin{eqnarray*}
\Hs_0 &\coloneqq& \{ f \in L^2(\R^d) \colon \supp(f) \subset \overline{\Om}_0 \}, \\
\Hs_1 & \coloneqq& \{ f \in H^1(\R^d) \colon \supp(f) \subset \overline{\Om}_0 \}.
\end{eqnarray*}
Then, $\Hs_0$ and $\Hs_1$ are Hilbert spaces with the respective norms
\begin{align*}
	\|f\|_{\Hs_0}  &= \|c^{-1} f\|_{L^2(\Om_0)} \,,\\
	\|f\|_{\Hs_1} & =  \|\nabla f\|_{L^2(\Om_0)}\,.
\end{align*}
We note that $\Hs_0 \simeq L^2(\Om_0)$ and $\Hs_1 \simeq H_0^1(\Om_0)$ (this second equivalence comes from the Poincar\'e inequality, see, e.g., \cite[Corollary 9.19]{brezis2010functional}). The above chosen norms are convenient for our later purposes.

For the image space, we fix a nonnegative function $\chi \in L^\infty(\partial \Om \times [0,T])$ such that $\supp(\chi) = \Gamma:=S \times [0,T]$ and denote
\begin{eqnarray*}
    \Ds_0 &:=& \left\{g \colon \|g\|_{\Ds_0} \coloneqq \|\sqrt{\chi} \, g\|_{L^2(\Gamma)}< \infty \right\}, \\
    \Ds_1 &:=& \left\{g \colon  g(\edot,0) \equiv 0,~ \|g\|_{\Ds_1}  \coloneqq  \|g_t\|_{\Ds_0} < \infty \right \}.
 \end{eqnarray*}
Let $H^i(\Gamma)$ be the standard Sobolev space of order $i$ on $\Gamma$. Noticing that $\Lo$ is a bounded map from $\Hs_i \to H^i(\Gamma)$  (which follows from \cite[Proposition 3]{US}) and $H^i(\Gamma) \subset \Ds_i$, we obtain

\begin{theorem} For $i=0,1$, $\Lo_i := \Lo|_{\Hs_i}$ is a bounded map from $\Hs_i$ to $\Ds_i$.
\end{theorem}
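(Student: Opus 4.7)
The plan is to factor the boundedness claim through the intermediate spaces $H^i(\Gamma)$, exactly as hinted in the sentence just before the theorem. That is, I would write the map as the composition
\begin{equation*}
\Hs_i \xrightarrow{\;\Lo\;} H^i(\Gamma) \xhookrightarrow{\;\iota\;} \Ds_i,
\end{equation*}
invoke \cite[Proposition 3]{US} for boundedness of the first arrow, and then verify by hand that the canonical inclusion $\iota$ is continuous (and in the case $i=1$ actually lands in the subspace cut out by the boundary condition $g(\edot,0)\equiv 0$). The first arrow is essentially a standard trace/energy estimate for the wave equation written on the space-time cylinder $\pd\Om\times[0,T]$, already carried out in the cited paper; I would simply quote it and not redo the microlocal/energy work.

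For $i=0$, continuity of $\iota$ is immediate from $\chi\in L^\infty(\pd\Om\times[0,T])$: for $g\in L^2(\Gamma)$,
\begin{equation*}
\|g\|_{\Ds_0}=\|\sqrt{\chi}\,g\|_{L^2(\Gamma)}\le \|\chi\|_{L^\infty(\Gamma)}^{1/2}\,\|g\|_{L^2(\Gamma)},
\end{equation*}
so composing with $\|\Lo f\|_{L^2(\Gamma)}\lesssim \|f\|_{\Hs_0}$ gives the desired estimate.

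For $i=1$ there are two things to check. First, the vanishing trace $g(\edot,0)\equiv 0$: since $\supp(f)\subset\overline{\Om}_0\Subset\Om$, the initial datum $p(\edot,0)=f$ vanishes in a neighborhood of $\pd\Om\supset S$, so $\Lo f$ restricted to the initial time slice of $\Gamma$ is identically zero. Second, the norm bound: for $g\in H^1(\Gamma)$ one has $g_t\in L^2(\Gamma)$ and
\begin{equation*}
\|g\|_{\Ds_1}=\|g_t\|_{\Ds_0}\le \|\chi\|_{L^\infty(\Gamma)}^{1/2}\,\|g_t\|_{L^2(\Gamma)}\le \|\chi\|_{L^\infty(\Gamma)}^{1/2}\,\|g\|_{H^1(\Gamma)},
\end{equation*}
so again composing with the $\Hs_1\to H^1(\Gamma)$ estimate from \cite[Proposition 3]{US} finishes the argument.

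The only step that is not routine is the boundedness $\Lo\colon \Hs_i\to H^i(\Gamma)$, and this is precisely what is borrowed from \cite{US}: it is a consequence of $\Lo=\Lo_++\Lo_-$ being an FIO of order zero, together with the fact that pulling back by the chart for $\Gamma$ only changes the Sobolev norm by a bounded factor. Everything else reduces to the trivial $L^\infty$ multiplier estimate for $\sqrt{\chi}$ and the elementary observation about the support of $f$, so no genuine obstacle is expected.
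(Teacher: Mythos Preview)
Your proposal is correct and follows exactly the approach the paper indicates in the sentence immediately preceding the theorem: factor $\Lo_i$ as $\Hs_i\to H^i(\Gamma)\hookrightarrow \Ds_i$, invoke \cite[Proposition~3]{US} for the first arrow, and use the trivial $L^\infty$ multiplier bound for $\sqrt{\chi}$ for the second. You are in fact slightly more careful than the paper, which writes ``$H^i(\Gamma)\subset\Ds_i$'' without commenting on the initial-trace condition in $\Ds_1$; your observation that $\supp(f)\Subset\Om$ forces $\Lo f(\edot,0)\equiv 0$ is the correct way to fill that gap.
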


We now consider $\chi \, g$ as a function on $\pd \Om \times [0,T]$ which vanishes on $(\pd \Om \setminus S) \times [0,T]$. The next theorem gives us an explicit formulation of $\Lo_i^*$.

\begin{theorem} \label{T:adjoint} Let $g \in C^\infty(\Gamma)$. 
\begin{enumerate}[leftmargin=4em,label=(\alph*)]
\item Consider the wave equation
\begin{eqnarray} \label{E:adjoint}
\left\{\begin{array}{l} c^{-2}(x) \, q_{tt}(x,t) - \, \Delta q(x,t) =0,
\quad (x,t) \in (\R^d \setminus \partial \Om) \times (0,T), \\[6 pt] q(x,T) =0, \quad q_t(x,T) =0, \quad x \in \R^d, \\[6 pt]
\big[ q \big](y,t) =0, \Big[\frac{\partial q}{\partial \nu} \Big](y,t) =\chi(y,t)  \, g(y,t),  \quad (y,t) \in \partial \Om \times [0,T]. \end{array} \right.
\end{eqnarray}
Then $$\Lo^*_0 g = q_t(\edot ,0)|_{\Om_0}.$$
Here and elsewhere, $[\edot]$ denotes the jump of a function across the boundary $\partial \Om$. That is $$[q] =q_+|_{\partial \Om} - q_-|_{\partial \Om} \quad  \mbox{ and } \quad \left[\frac{ \partial q}{\partial \nu}\right] =\frac{ \partial q}{\partial \nu}\Big|_{\partial \Om} - \frac{ \partial q}{\partial \nu} \Big |_{\partial \Om},$$
where $q_+ \coloneqq q|_{\R^d \setminus \overline \Om}$ and $q_- \coloneqq q|_{\Om}$ have well-defined traces on $\partial \Om$.
\item Assume further that $\chi$ is independent of $t$ (i.e., $\chi(y,t) =\chi(y)$). We define 
\[\bar g(x,t) := g(x,t) - g(x,T) \,, \]
and consider the wave equation
\begin{eqnarray} \label{E:adjoints}
\left\{\begin{array}{l} c^{-2}(x) \, \bar q_{tt}(x,t) - \, \Delta \bar q(x,t) =0,
\quad (x,t) \in (\R^d \setminus \partial \Om) \times (0,T), \\[6 pt] \bar q(x,T) =0, \quad \bar q_t(x,T) =0, \quad x \in \R^d, \\[6 pt]
\big[ \bar q \big](y,t) =0, \Big[\frac{\partial \bar q}{\partial \nu} \Big](y,t) =\chi(y,t)  \, \bar g(y,t),  \quad (y,t) \in \partial \Om \times [0,T]. \end{array} \right.
\end{eqnarray}
Then,
$$\Lo^*_1 g = \Pi [\bar q_t(\edot ,0)],$$
where $\Pi$ is the projection from $H^1(\Om_0)$ onto $\Hs_1 =H_0^1(\Om)$. \end{enumerate}
\end{theorem}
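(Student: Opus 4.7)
The plan is to prove both formulas by pairing the wave equation satisfied by $p = \Lo f$ against a test function built from $q$ or $\bar q$, and then reducing the resulting identity via integration by parts in $t$ and in $x$. The key mechanism is that the test function has a jump in its normal derivative across $\partial\Om$, so every spatial integration by parts is performed on $\Om$ and on $\R^d \setminus \overline{\Om}$ separately and summed; the jump conditions $[q] = 0$, $[\partial_\nu q] = \chi g$ (respectively $[\bar q] = 0$, $[\partial_\nu \bar q] = \chi \bar g$) then generate a single boundary contribution on $\partial \Om$ that matches the target pairing on $\Gamma$.

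For part (a), I multiply $c^{-2} p_{tt} - \Delta p = 0$ by $q$ and integrate over $\R^d \times [0,T]$. Two integrations by parts in $t$, using $p(0)=f$, $p_t(0)=0$, $q(T)=q_t(T)=0$, move both temporal derivatives from $p$ onto $q$ and produce the volume term $\int_{\R^d} c^{-2} f\, q_t(\cdot,0)\, dx$. For the Laplacian I apply Green's second identity on each side of $\partial\Om$; the condition $[q]=0$ kills one boundary term, while $[\partial_\nu q] = \chi g$ produces $\int_\Gamma \chi p g$. The remaining bulk term $\int p\,(c^{-2} q_{tt} - \Delta q)$ vanishes because $q$ solves the wave equation on each side of $\partial\Om$. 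What falls out is the identity $\int_\Gamma \chi p g \, dS\, dt = \int_{\Om_0} c^{-2} f \, q_t(\cdot,0)\, dx$, which is precisely $\langle \Lo_0 f, g\rangle_{\Ds_0} = \langle f, q_t(\cdot,0)|_{\Om_0}\rangle_{\Hs_0}$.

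For part (b) the $H^1$-type inner products dictate pairing with a test function one derivative higher. I introduce the energy
\[
E(t) \coloneqq \int_{\R^d}\bigl(c^{-2}\, p_t \, \bar q_{tt} + \nabla p \cdot \nabla \bar q_t\bigr)\, dx .
\]
Using $p(0) = f$ and $p_t(0) = 0$ gives $E(0) = \int_{\R^d} \nabla f \cdot \nabla \bar q_t(\cdot, 0)\,dx$. At the final time, $\bar q_t(T)=0$ is part of the data and $\bar q_{tt}(T) = c^2 \Delta \bar q(T) = 0$ follows from the wave equation together with $\bar q(T)=0$, so $E(T)=0$. Differentiating $E$ in $t$, substituting $c^{-2} p_{tt} = \Delta p$ and $c^{-2} \bar q_{ttt} = \Delta \bar q_t$ (both pointwise off $\partial\Om$, which has measure zero), and applying Green's first identity on each side of $\partial\Om$ with the jump data $[\bar q_t] = [\bar q_{tt}] = 0$, $[\partial_\nu \bar q_t] = \chi g_t$, $[\partial_\nu \bar q_{tt}] = \chi g_{tt}$, one finds that all the interior bulk contributions and the boundary terms containing $g_{tt}$ cancel, leaving
\[
\frac{dE}{dt} = -\int_{\partial \Om} \chi \, p_t \, g_t \, dS .
\]
Integrating in $t$ and using $E(T)=0$ yields $\int_{\Om_0}\nabla f \cdot \nabla \bar q_t(\cdot,0)\,dx = \int_\Gamma \chi p_t g_t \, dS\,dt$. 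Since $\Pi$ is the orthogonal projection of $H^1(\Om_0)$ onto $\Hs_1 = H^1_0(\Om_0)$ with respect to the inner product $\int \nabla u \cdot \nabla v$, the left-hand side equals $\langle f, \Pi[\bar q_t(\cdot,0)]\rangle_{\Hs_1}$, and therefore $\Lo_1^* g = \Pi[\bar q_t(\cdot,0)]$.

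The main obstacle is the careful bookkeeping of jump contributions in part (b): two successive applications of Green's identity create boundary terms involving $[\partial_\nu \bar q_{tt}] = \chi g_{tt}$, and one must verify that these cancel against one another so that only the desired $\int_\Gamma \chi p_t g_t$ survives. A subsidiary technical point is that the formulas as written use pointwise traces of $\bar q_t$ and $\bar q_{tt}$ on $\partial\Om$ and at $t = 0$, which are justified for smooth $g \in C^\infty(\Gamma)$ by the single-layer regularity of $\bar q$; the extension to general $g \in \Ds_i$ then proceeds by a density argument once boundedness of $\Lo_i^*$ is in hand.
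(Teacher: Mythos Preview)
Your argument for part (a) is essentially the paper's: both reduce to pairing the forward solution $p$ with the adjoint solution $q$ and integrating by parts in $t$ and $x$, using the jump conditions to produce the boundary term $\int_\Gamma \chi\, p\, g$. The paper starts from the weak formulation of \eqref{E:adjoint} and substitutes $v=p$, while you start from $c^{-2}p_{tt}-\Delta p=0$ and multiply by $q$; these are the same computation read from opposite ends.

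For part (b) your route is genuinely different from the paper's. The paper re-uses the identity from part (a) with the test function $v=p_{tt}$, then invokes $c^{-2}p_{tt}(\cdot,0)=\Delta f$ to convert the spatial pairing into $(\bar q_t(\cdot,0),\Delta f)$, and finally integrates by parts once in $t$ and once in $x$ (the latter bringing in the harmonic extension $\phi$). Your approach instead builds the mixed energy $E(t)=\int (c^{-2}p_t\bar q_{tt}+\nabla p\cdot\nabla \bar q_t)$ and computes $E'(t)$ directly. Your method is self-contained and arguably cleaner; the paper's method has the virtue of recycling the machinery from (a). One small remark: with the natural choice of Green's first identity (moving $\nabla$ from $\Delta p$ onto $\bar q_{tt}$, and from $\Delta \bar q_t$ onto $p_t$), no boundary terms involving $g_{tt}$ ever appear---the only surviving boundary contribution is $-\int_{\partial\Om}\chi\, p_t\, g_t$---so your caution about cancelling $g_{tt}$ terms is unnecessary, though not incorrect.

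The one place where the paper is more careful than your outline is in establishing $\bar q_t(\cdot,0)|_{\Om_0}\in H^1(\Om_0)$, which is needed for $\Pi[\bar q_t(\cdot,0)]$ to make sense. The paper deduces this from $\Delta \bar q_t(\cdot,0)\in H^{-1}(\Om')$ together with $\bar q_t(\cdot,0)\in L^2(\Om')$ and an elliptic regularity result from Lions--Magenes. Your appeal to ``single-layer regularity'' is correct in spirit but should be fleshed out if you want a complete proof.
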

Let us mention that the projection operator $\Pi$ above is given by $\Pi(f) = f -\phi.$ Here, $\phi$ is the harmonic extension of $f|_{\partial \Om_0}$ to $\overline \Om_0$. That is, $\Delta \phi =0$ in $\Om_0$ and $\phi|_{\partial \Om_0} = f|_{\partial \Om_0}$. The proof of Theorem~\ref{T:adjoint} is similar to that \cite[Theorem 1.5]{belhachmi2016direct}. However, to make our presentation rigorous, we have to employ several  results from functional analysis and distribution theory.

\begin{proof}
Let us make use of the weak formulation for (\ref{E:adjoint}) (see Appendix~\ref{A:weak_soln}). Then, for any $v \in C^\infty(\R^n \times \overline \R)$ such that $v(\edot, t) \in C_0^\infty(\R^d)$ for all $t \in \overline \R$, we have
 \begin{multline*} \int_0^T \big(c^{-2}  \, q_{tt}(\edot,t)~ , ~v(\edot,t) \big) \, dt + \int_0^T \int_{\R^d} \nabla q(x,t) \, \nabla v(x,t) \,dx \, dt =
\\ - \int_0^T \int_{\partial \Om} \chi(y,t) \, g(y,t) \, v(y,t) \, dy \, dt.
\end{multline*}
Here, on the first term of the left hand, $(\edot, \edot)$ is the action of a distribution on a test function. Taking integration by parts with respect to $t$ for the first term and with respect to $x$ for the second term of the left hand side, we obtain
\begin{multline}\label{E:eq} - \big(c^{-2} \, q_t(\edot,0) , v(\edot,0) \big)+ \big(c^{-2} \, q(\edot,0) , v_t (\edot,0) \big)  \\ + \int_0^T \int_{\R^d}  q(x,t) \, \big[ c^{-2}(x) \, v_{tt} (x,t) -\Delta v(x,t) \big] \, dx \, dt  \\ = - \int_0^T \int_{\partial \Om} \chi(y,t) \, g(y,t) \, v(y,t) \, dy \, dt.\end{multline}

\noindent{\bf  (a)} Let $f \in C_0^\infty(\R^d)$ and $p$ be the solution of  (\ref{E:PAT}). Choosing $v = p$ in (\ref{E:eq}), we obtain
\begin{equation*}\big(c^{-2}  \, q_t(\edot,0) ,  f \big) = \int_0^T \int_{\partial \Om}  \chi(y,t) \,  g(y,t) \, \Lo(f)(y,t) \, dy \, dt.\end{equation*} 
For any $f \in C_0^\infty(\Om_0)$, the right hand side is bounded by $C\, \|f\|_{L^2(\Om_0)}$ (since $\Lo: L^2(\Om_0) \to L^2(\partial \Om \times [0,T])$ is bounded). Therefore, $c^{-2} \, q_t(\edot,0) \in (L^2(\Om_0))' = L^2(\Om_0)$. Moreover, by the definition of the product in $\Hs_0$ and $\Ds_0$,  we arrive to $\left<q_t(\edot,0) , f \right>_{\Hs_0} = \left< g , \Lo(f)\right>_{\Ds_0}.$
Since this is true for all $f \in C_0^\infty(\Om_0)$, we obtain
$\Lo_0^* g = q_t(\edot,0)|_{\Om_0}.$
This finishes the proof of (a). \\

\noindent{\bf (b)} Let $\bar q$ be the solution of (\ref{E:adjoints}). Then, $\bar q$ satisfies (\ref{E:eq}) with $g$ being replaced by $\bar g$. Assuming $f \in C_0^\infty(\Om)$ and picking $v=p_{tt}$, we obtain
$$\left(c^{-2} \, \bar q_t(\edot,0), \, p_{tt}(\edot,0)\right) = \int_0^T \int_{\partial \Om}  \chi(y,t) \, \bar g(y,t) \, \partial_t^2 \, \Lo(f)(y,t) \, dy \, dt.$$
Noting that $c^{-2} (x) \, p_{tt}(x,0) = \Delta f(x)$, we arrive to
$$\big(\bar q_t(\edot,0), \Delta f \big) =  \int_0^T  \int_{\partial \Om} \chi(y,t) \, \bar g(y,t)  \, \partial_t^2 \, \Lo(f)(y,t) \, dy \, dt.$$
Let us now consider $\chi(y,t) = \chi(y)$. Taking integration by parts for the right hand side with respect to $t$, we get \footnote{Note that $\bar g(\edot,T)  \equiv 0$ and $\partial_t \Lo(f)(\edot, 0) \equiv 0$.}
\begin{equation*} \big( \bar q_t(\edot,0), \Delta f \big)  = - \int_0^T \int_{\partial \Om} \chi(y) \, \bar g_t(y,t) \, \partial_t  \Lo(f)(y,t) \, dy \, dt. \end{equation*}
That is,
\begin{equation}\label{E:H1} \big( \bar q_t(\edot,0), \Delta f \big)  =  - \int_0^T \int_{\partial \Om} \chi(y) \, g_t(y,t) \, \partial_t  \Lo(f)(y,t) \, dy \, dt. \end{equation}
Let us prove that this equation implies $\bar q_t(\edot,0)|_{\Om_0} \in H^1(\Om_0)$. To that end, we fix $\Om'$ such that $\Om_0 \Subset \Om' \Subset \Om$. For any $f \in C_0^\infty(\Om')$, the left hand side equals $\big(\Delta \bar q_t(\edot,0), f \big)$ and the right hand side is bounded by $C \, \|f\|_{H_0^1(\Om')}$ (noting that $\Lo: H_0^1(\Om') \to H^1(\partial \Om \times [0,T])$ is bounded). We obtain $\Delta \bar q_t(\edot,0) \in H^{-1}(\Om')$. Moreover, by the same argument as in (a),  $\bar q_t(\edot,0) \in L^2(\Om')$. Let $\mu \in C^\infty_0(\Om')$ such that $\mu \equiv 1$ on $\Om_0$ and define $\Psi = \mu \, \bar q_t(\edot,0)$. Then, $- \Delta \Psi + \Psi \in H^{-1}(\Om')$ and $\Psi|_{\partial \Om'}=0$. Applying \cite[Theorem 9.1 (Chapter 2)]{lions2012non}, we obtain $\Psi \in H_0^1(\Om')$. Therefore $\bar q_t(\edot,0)|_{\Om_0} = \Psi|_{\Om_0} \in H^1(\Om_0)$.

Now assume $f \in C^\infty_0(\Om_0)$. Taking integration by parts of the left hand side with respect to $x$, we can write (\ref{E:H1}) in the form
\begin{eqnarray*} \int_{\Om_0} \nabla \big[ \bar q_t(x,0)\big]  \,\nabla f(x)  dx = \int_0^T \int_{\partial \Om} \chi(y) \, g_t(y,t) \, \partial_t  \Lo(f)(y,t) \, dy \, dt .\end{eqnarray*}
Let $\phi$ be the harmonic extension of $\bar q_t(\edot,0)|_{\partial \Om_0}$ to $\overline \Om_0$. Then, $\bar q_t(\edot,0) - \phi \in H^1_0(\Om_0)= \Hs_1$ (see, e.g., \cite[Section 2.9]{lions2012non}). Moreover, 
\begin{multline*} \int_{\Om_0} \nabla \big[ \bar q_t(x,0) - \phi(x) \big]  \,\nabla f(x)  dx = \int_{\Om_0} \nabla \big[ \bar q_t(x,0) \big]  \,\nabla f(x)  dx+ \int_{\Om_0} \Delta \phi(x) \,f(x)  dx \\ =  \int_{\Om_0} \nabla \big[ \bar q_t(x,0) \big]  \,\nabla f(x)  dx. \end{multline*}
We obtain
\begin{equation*}\int_{\Om_0} \nabla \big[\bar q_t(x,0) - \phi(x) \big]  \,\nabla f(x)  dx =\int_0^T  \int_{\partial \Om} \chi(y) \, g_t(y,t) \, \partial_t  \Lo(f)(y,t) \, dy \, dt \,.\end{equation*} 
That is, $\left< \Pi [\bar q_t(\edot,0)], f\right>_{\Hs_1} = \left< g, \Lo f\right>_{\Ds_1},$
which proves $\Lo_1^* g = \Pi[\bar q_t(\edot,0)].$ 
\end{proof}

\begin{remark} 
Let us make the following observations:
\begin{enumerate}[leftmargin=4em,label=(\alph*)]
\item Since $C^\infty(\Gamma)$ is dense in both $\Ds_0$ and $\Ds_1$, the adjoint operators $\Lo^*_0$ and $\Lo_1^*$ are uniquely determined by the formulas in Theorem~\ref{T:adjoint}.

\item Equation (\ref{E:adjoint}) can be reformulated as (see \ref{E:reformg}):
\begin{eqnarray} \label{E:reform}
\left\{\begin{array}{l} c^{-2}(x) \, q_{tt}(x,t) - \, \Delta q(x,t) = -  \delta_{\pd \Om}(x)\, \chi(x,t) \,g(x,t) ,~ (x,t) \in \R^d \times(0,T), \\[6 pt] q(x,T) =0, \quad q_t(x,T) =0, \quad x \in \R^d. \end{array} \right.
\end{eqnarray}
This formulation will be used to implement the adjoint operator 
in Section~\ref{S:Num}. 

\item Compared to $\Lo_0^*$, the expression for $\Lo_1^*$ involves an extra projection operator. In our numerical experiments, we will only use $\Lo_0^*$ since it is simpler to implement. However, the knowledge of $\Lo_1^*$ is helpful in designing iterative algorithms that converge in the $\Hs_1$ norm (which is equivalent to the $H^1$-norm). 

\item If, instead of $\Lo_0 \colon \Hs_0 \to \Ds_0$, we consider $\Lo \colon L^2(\Om_0) \to \Ds_0$, then $\Lo^* g = \frac{1}{c^2} q_t(\edot ,0)$. Our formulations of $\Lo^*$ are different from \cite{belhachmi2016direct},  where $\Lo^* g= -\Delta^{-1} (\frac{1}{c^2}
q_t(\edot ,0) )$. Our formulations make the inverse problem of PAT well-posed under the visibility condition (see Theorem~\ref{T:well-posed} below).
\end{enumerate}
\end{remark}

Let us recall the visibility condition described in the introduction:

\begin{vis}
There is a closed subset $S_0 \subset \partial \Om$ such that $S_0 \subset \interior(S)$ and $T_0 \leq T$ such that the following condition holds: for any element $(x,\xi) \in \cT^*\Om_0 \setminus 0$, at least one of the unit speed geodesic rays originating from $x$ at time $t=0$ along the directions $\pm \xi$ intersects with $S_0$ at a time $t < T_0$.
\end{vis}

Let us prove that with our choices of mapping spaces, the inverse problem of PAT is well-posed.
\begin{theorem} \label{T:well-posed} Assume that the visibility condition holds and $\chi >0$ on $S_0 \times [0,T_0]$.
For $i=0,1$, there is a constant $C>0$ such that for any $g= \Lo f$, we have
$$\|f\|_{\Hs_i} \leq C \|g\|_{\Ds_i}.$$
\end{theorem}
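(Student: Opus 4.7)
The plan is to split each bound into an unweighted stability on the visibly observed set $S_0 \times [0,T_0]$ and a trivial comparison coming from the positivity of $\chi$. Since $\chi$ is (essentially) bounded below by some $c_0 > 0$ on the compact set $S_0 \times [0,T_0]$,
\begin{equation*}
\|g\|_{\Ds_0}^2 \;\geq\; c_0 \,\|g\|_{L^2(S_0 \times [0,T_0])}^2
\qquad\text{and}\qquad
\|g\|_{\Ds_1}^2 \;=\; \|g_t\|_{\Ds_0}^2 \;\geq\; c_0 \,\|g_t\|_{L^2(S_0 \times [0,T_0])}^2,
\end{equation*}
so the theorem reduces to the two unweighted estimates
\begin{equation*}
\|f\|_{\Hs_0} \;\leq\; C \,\|\Lo f\|_{L^2(S_0 \times [0,T_0])}
\qquad\text{and}\qquad
\|f\|_{\Hs_1} \;\leq\; C \,\|\partial_t \Lo f\|_{L^2(S_0 \times [0,T_0])}.
\end{equation*}

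For $i=0$, I would simply invoke the $L^2$-analog of \cite[Theorem~3]{US} that the introduction notes can be proved ``in a similar manner'': under the visibility condition, $\|f\|_{L^2(\Om_0)} \leq C \|\Lo f\|_{L^2(S_0 \times [0,T_0])}$. Combined with the norm equivalence $\|\,\cdot\,\|_{\Hs_0} \simeq \|\,\cdot\,\|_{L^2(\Om_0)}$ (valid because $c$ is smooth and bounded below by a positive constant), this closes the case.

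For $i=1$, the heart of the matter is $\|f\|_{H^1_0(\Om_0)} \lesssim \|\partial_t \Lo f\|_{L^2(S_0 \times [0,T_0])}$, after which the Poincar\'e equivalence $\|f\|_{\Hs_1} \simeq \|f\|_{H^1_0(\Om_0)}$ finishes the job. I would derive it from the microlocal description \cite[Proposition~3]{US} that $\Lo = \Lo_+ + \Lo_-$, where $\Lo_\pm$ are Fourier integral operators of order zero whose canonical relations follow the forward/backward unit-speed geodesic flow; composing with $\partial_t$ produces an FIO of order $+1$ whose principal symbol is non-vanishing precisely on the visible cone. Under the visibility condition this cone exhausts $\cT^*\Om_0 \setminus 0$, so $\partial_t \circ \Lo$ is a globally elliptic FIO, hence bounded below from $H^1_0(\Om_0)$ into $L^2(S_0 \times [0,T_0])$ modulo a compact smoothing remainder; the remainder is absorbed using injectivity of $\Lo$ from \cite[Proposition~2]{US} via a standard Fredholm-alternative argument. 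The main obstacle is precisely this FIO step, in particular verifying ellipticity of $\partial_t \Lo$ on the full visible cone and executing the upgrade from ``modulo smoothing'' to a genuine lower bound. An alternative route, which sidesteps some of the principal-symbol bookkeeping, is to note that $w := p_t$ satisfies the acoustic wave equation with Cauchy data $(0,\,c^2\Delta f)$ and boundary trace $w|_{S \times [0,T]} = g_t$, and then combine an $L^2$-observability estimate for this ``momentum'' initial-value problem with the elliptic equivalence $\|\Delta u\|_{H^{-1}(\Om_0)} \simeq \|u\|_{H^1_0(\Om_0)}$.
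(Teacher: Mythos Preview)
Your reduction via positivity of $\chi$ and the $i=0$ case match the paper's (the paper first enlarges $S_0$ to a closed $S_1$ with $S_0\subset\interior(S_1)$ and $\chi>0$ on $S_1\times[0,T_0]$ before invoking \cite{US}, but for $i=0$ this is cosmetic). For $i=1$ your route genuinely differs. The paper does \emph{not} analyze $\partial_t\Lo$ as an FIO; instead it first quotes the estimate $\|f\|_{\Hs_1}\le C\|g\|_{H^1(S_1\times[0,T])}$ from \cite[Theorem~3]{US} and then shows that on the range of $\Lo$ the full $H^1$ norm of $g$ is controlled by $\|\partial_t g\|_{L^2}$: since $\WF(g)$ lies in the hyperbolic zone $\{c(y)|\eta|<|\tau|\}$, a sharp Fourier cutoff to $\{|\tau|\ge c_0|\eta|\}$ differs from $g$ by a smoothing (hence compact) operator in $f$, and on that zone $1+|\eta|^2+\tau^2\lesssim 1+\tau^2$; the compact remainder is absorbed via injectivity and \cite[Theorem~V.3.1]{Taylor}, and the residual $\|g\|_{L^2}$ term is dropped using $g(\cdot,0)=0$. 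Your proposal instead rebuilds the lower bound for $\partial_t\Lo$ from scratch through its normal operator. That is correct in outline, but it requires checking that the canonical relations of $\Lo_\pm$ are local graphs (no conjugate points between $\Om_0$ and $S_0$) so that $(\partial_t\Lo_\pm)^*(\partial_t\Lo_\pm)$ are genuine pseudodifferential operators; this is handled inside \cite{US} but is not addressed in your sketch. The paper's route buys modularity: all the FIO machinery is outsourced to \cite{US}, and the only new ingredient is the hyperbolic-zone frequency comparison on the range of $\Lo$. Your route is more direct but redoes that machinery. Your alternative via $w=p_t$ with Cauchy data $(0,c^2\Delta f)\in L^2\times H^{-1}$ is also viable and is in fact closest to the paper's footnote invoking \cite{lasiecka1986nonhomogeneous} at the shifted Sobolev scale, though in this free-space geometry the observability at that scale again routes through the argument of \cite{US}.
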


\begin{proof} Let us pick a closed subset $S_1$ of $S$ such that $\chi>0$ on $S_1 \times [0,T_0]$, $S_0 \subset \interior(S_1)$, and $S_1  \subset \interior(S)$. Following the lines of  \cite[Theorem 3]{US}, we obtain for $i=0,1$ \footnote{The result for $i=1$ is obtained in that reference. The result for $i=0$ is obtained similarly, one only needs to invoke \cite[Theorem~2.3]{lasiecka1986nonhomogeneous} instead of \cite[Theorem~2.1]{lasiecka1986nonhomogeneous}.}
\begin{equation} \label{E:USestimate} \|f\|_{\Hs_i} \leq C \|g\|_{H^i(S_1 \times [0,T])}.\end{equation}
Here and elsewhere, $H^i(S_1 \times [0,T])$ is the standard Sobolev space of order $i$ on $S_1 \times [0,T]$ and $C$ is a generic constant which may be different in one place from another.

Let us consider $i=0$. Noticing that $\|g\|_{H^0(S_1 \times [0,T])} \leq \|g\|_{\Ds_0}$, we obtain
$$\|f\|_{\Hs_0} \leq C \|g\|_{\Ds_0}.$$

Let us now consider $i=1$. From (\ref{E:USestimate}), we have
\begin{equation}\label{E:H11} \|f\|_{\Hs_1} \leq C \|g\|_{H^1(S_1 \times [0,T])}.\end{equation}
Let us now prove 
\begin{equation} \label{E:est1} \|f\|_{\Hs_1}  \leq C(\|\partial_t g\|_{L^2(S_1 \times [0,T])} + \|g\|_{L^2(S_1 \times [0,T])}).\end{equation}
By using a local chart for $\partial \Om$ if necessary, we can assume without loss of generality that $S_1 =  \R^{d-1}$ and $g=g(x',t)$ is a compactly supported function on $\R^d = \R^{d-1} \times \R$.
Let us denote by $\Hr$ the hyperbolic zone
$$\Hr = \{(x',t,\eta,\tau) \in \cT^* (\R^{d-1} \times \R): c(x') \, |\eta| <\tau\}.$$
Then $\WF(g) \subset \Hr$  (see, e.g., \cite[Proposition 3]{US}). Therefore, the Fourier transform $\hat{g}(\eta,\tau)= \mathcal{F}(g)(\eta,\tau)$ of $g$ decays faster than any powers of $|(\eta,\tau)|$ outside of the region $A:=\{(\eta,\tau): |\tau| \geq c_0 |\eta|\}$, where $c_0=\min_x c(x)$. We denote by $\chi_A$ the characteristic function of $A$ and define
$$\To(f) = \mathcal{F}^{-1}\big(\hat{g}(\eta,\tau) \chi_A(\eta,\tau)\big), \quad \Ko(f) = \mathcal{F}^{-1}\big(\hat{g}(\eta,\tau) \big(1- \chi_A(\eta,\tau)\big).$$
Then $\To$ and $\Ko$ are bounded operators from $\Hs_1$ to $H^1(\R^{d})$ and $\To + \Ko = \Lo$. Since $\mathcal{F}(\Ko f)(\eta,\tau)= \hat g(\eta,\tau) (1- \chi_A(\eta,\tau))$ decays faster than any powers of $|(\eta,\tau)|$, we obtain $\mR(\Ko) \subset H^s(\R^d)$ for any $s>0$. Therefore, $\Ko$ is a compact operator. Moreover, from (\ref{E:H11}), we obtain
$$\|f\|_{\Hs_1} \leq C \Big(\|\To f\|_{H^1(\R^d)} + \|\Ko f\|_{H^1(\R^d)} \Big) \leq  C \Big(\|(\To f, g)\|_{H^1(\R^d) \times L^2(\R^d)} + \|\Ko f\|_{H^1(\R^d)} \Big).$$
Since $f \to (\To f, g)$ is injective, applying \cite[Theorem V.3.1]{Taylor}, we obtain
$$\|f\|_{\Hs_1} \leq C  \, \|(\To f, g)\|_{H^1(\R^d) \times L^2(\R^d)}.$$
We note that
\begin{multline*} \|\To f\|^2_{H^1(\R^d)}= \int_{A} |\hat{g}(\eta,\tau)|^2 (1+ |\eta|^2 + |\tau|^2) d\eta d \tau \leq C \, \int_{\R^d} \, |\hat{g}(\eta,\tau)|^2 \, (1+ |\tau|^2) d\eta \, d \tau  \\ = C \big(\|g\|^2_{L^2(\R^d)} + \|g_t\|^2_{L^2(\R^d)} \big). \end{multline*} 
Therefore,
$$\|f\|_{\Hs_1}  \leq C(\|\partial_t g\|_{L^2(\R^d)} + \|g\|_{L^2(\R^d)}).$$
This finishes the proof of (\ref{E:est1}).

Keeping in mind that $g(\edot,0) \equiv 0$, we obtain from (\ref{E:est1})
$$\|f\|_{\Hs_1}  \leq C \, \|\partial_t g\|_{L^2(S_1 \times [0,T])}.$$
That is, 
$$ \|f\|_{\Hs_1} \leq C \, \|g\|_{\Ds_1},$$
which concludes our proof. \end{proof}

\begin{remark}
Let us recall that (see Section~\ref{S:IterativeH}) when the linear inverse problem is well-posed, Landweber's and the CG methods have a linear rate of convergence. Theorem~\ref{T:well-posed} shows that with our choices of mapping spaces, the inverse problem of PAT is well-posed under the visibility condition. Therefore, Landweber's and the CG methods converge linearly in either $L^2$-norm (i.e., $\Hs_0$-norm) or $H^1$-norm (i.e., $\Hs_1$-norm), depending on our choice of the adjoint operator in Theorem~\ref{T:adjoint}, if the visibility condition holds \footnote{The same conclusion holds for the Nesterov's method if $\mu>0$.}. This convergence rate has not been obtained before by any method. 
\end{remark}

\subsection{Microlocal analysis for the normal operator $\Lo^* \Lo$}

To better understand the nature of $\Lo^*  \Lo$, we will analyze it from the microlocal analysis point of view.
Let us recall that $r_\pm(x,\xi)$ is the (unit speed) geodesic rays originating from $x$ along direction of $\pm \xi$.
 We assume that $r_\pm(x,\xi)$ intersects the boundary $\partial \Om$ at a unique point $x_\pm = x_\pm(x,\xi)$.
 We denote by $\theta_\pm$ the angle between $r_\pm(x,\xi)$ and the normal vector of $\partial \Om$ at $x_\pm$.
 Our main result is the following theorem.

\begin{theorem}\label{T:Micro} Assume that $\chi \in C_0^\infty(\pd \Om \times [0,T])$ and $\Lo = \Lo_0$.
Then, the normal operator
$\No = \Lo^* \, \Lo$ is a pseudo-differential operator of order zero, whose principal symbol is
\begin{equation} \label{E:Symbol} \sigma_0(x,\xi) = \frac{1}{4} \left(\frac{c(x_+) \, \chi(x_+,t_+)}{\cos(\theta_+)} + \frac{c(x_-) \, \chi(x_-,t_-)}{\cos(\theta_-)} \right).\end{equation}
Here, $t_\pm$ is the geodesics distance between $x$ and $x_\pm$.
\end{theorem}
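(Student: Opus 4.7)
The plan is to exploit the FIO decomposition $\Lo = \Lo_+ + \Lo_-$ from \cite{US}, in which $\Lo_\pm$ are order-zero FIOs with canonical relations $C_\pm$ that transport a singularity at $(x,\xi) \in \cT^*\Om_0 \setminus 0$ to the boundary point $(x_\pm, t_\pm, \eta_\pm, \tau_\pm)$ where the $\pm\xi$-geodesic first meets $\pd\Om\times[0,T]$. Crucially, the dual time variable $\tau$ has opposite signs on $C_+$ and $C_-$, so these relations live in disjoint conic subsets of $\cT^*(\pd\Om\times[0,T])$. Expanding
\[
\No = \Lo_+^*\Lo_+ + \Lo_-^*\Lo_- + \Lo_+^*\Lo_- + \Lo_-^*\Lo_+,
\]
the two mixed terms have empty canonical relations off the zero section and are therefore smoothing, while by strict convexity of $\Om$ each $C_\pm$ is a local canonical diffeomorphism onto an open conic subset of $\cT^*\Gamma\setminus 0$; this makes each $\Lo_\pm^*\Lo_\pm$ a $\Psi$DO of order zero, and hence so is $\No$, with $\sigma_0(\No)=\sigma_0(\Lo_+^*\Lo_+)+\sigma_0(\Lo_-^*\Lo_-)$.

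The second step is the explicit principal symbol calculation. I would represent $\Lo_\pm$ microlocally as an oscillatory integral with phase $\varphi_\pm$ obtained by solving the eikonal equation for the half-wave propagator, and principal amplitude equal to $\tfrac{1}{2}$, the $\tfrac{1}{2}$ arising from $\cos(t\sqrt{-c^2\Delta})=\tfrac{1}{2}(e^{+it\sqrt{-c^2\Delta}}+e^{-it\sqrt{-c^2\Delta}})$. Taking the adjoint with respect to the weighted inner products $\langle\cdot,\cdot\rangle_{\Hs_0}$ and $\langle\cdot,\cdot\rangle_{\Ds_0}$ inserts the multiplicative weight $\chi(x_\pm, t_\pm)$ and converts the interior measure $c^{-2}\,dx$ on the source into the boundary measure $dS\,dt$ on the target. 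By the stationary-phase composition formula for FIOs, $\sigma_0(\Lo_\pm^*\Lo_\pm)(x,\xi)$ equals the square of this amplitude, namely $\tfrac{1}{4}$, times $\chi(x_\pm, t_\pm)$, times the Jacobian of $C_\pm$ viewed as a map between cotangent bundles equipped with their natural half-densities. A direct calculation in boundary normal coordinates at $x_\pm$ identifies this Jacobian as $c(x_\pm)/\cos(\theta_\pm)$; summing over the two sheets $\pm$ then produces \eqref{E:Symbol}.

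I expect the main obstacle to be this final geometric computation, that is, correctly pinning down the factor $c(x_\pm)/\cos(\theta_\pm)$. One has to simultaneously account for the conformal weight $c^{-2}$ on $\Om_0$, the Euclidean surface measure on $\pd\Om$, and the trade between the radial direction in $\cT^*\Om_0$ and the dual time direction in $\cT^*\Gamma$ performed by $C_\pm$. The factor $1/\cos(\theta_\pm)$ is the expected secant distortion from projecting the geodesic flow transversally onto $\pd\Om$ and is well defined precisely because strict convexity guarantees $\cos(\theta_\pm)\neq 0$ for the rays under consideration, while the factor $c(x_\pm)$ records the conversion between Euclidean arclength on $\pd\Om$ and Riemannian length in the metric $c^{-2}\,dx^2$. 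The cleanest route is to write $\Lo_\pm$ as an oscillatory integral in boundary normal coordinates at $x_\pm$, solve the eikonal by geometric optics, and read off the critical-point Hessian of the composition phase; an alternative is to verify the formula first in the constant-speed, flat-boundary case and reduce to it by a local canonical change of coordinates absorbing the variable geometry.
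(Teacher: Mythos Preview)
Your approach is correct in outline but takes a genuinely different route from the paper. You work at the level of abstract FIO calculus: decompose $\Lo=\Lo_++\Lo_-$, kill the cross terms $\Lo_\pm^*\Lo_\mp$ by disjointness of the canonical relations, observe that $C_\pm$ is a local canonical graph so that $\Lo_\pm^*\Lo_\pm$ is a $\Psi$DO, and then extract the principal symbol as an amplitude-squared times the half-density Jacobian of $C_\pm$. The paper instead works concretely with the parametrix of the \emph{adjoint} wave equation \eqref{E:adjoint}. Its key device is to analyze, on $\partial\Omega\times[0,T]$, the map $q_+|_{\partial\Omega}\mapsto[\partial_\nu q_+]$ as an elliptic boundary $\Psi$DO with principal symbol $(-2i)\sqrt{c^{-2}(y)\tau^2-|\eta|^2}$ (obtained by writing incoming/outgoing geometric-optics parametrices near the boundary), invert it, and then propagate back to $t=0$ along the same half-wave phase $\phi_+$. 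The factor $1/\cos\theta_+$ then appears transparently as $|\xi_+|/\sqrt{|\xi_+|^2-|\eta_+|^2}$ together with $c(x)|\xi|=c(x_+)|\xi_+|=\tau_+$, and the $\tfrac14$ comes from the $\tfrac12$ in the half-wave amplitude and the $\tfrac{1}{-2i}$ from inverting the boundary operator.

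What each approach buys: your route is structurally cleaner and makes it immediately clear \emph{why} $\No$ is a $\Psi$DO, but, as you correctly anticipate, the bookkeeping of half-densities, the $c^{-2}$ weight on $\Hs_0$, and the boundary measure is delicate and easy to get wrong. The paper's route avoids the abstract Jacobian entirely by computing a concrete boundary $\Psi$DO symbol and reading off the geometric factor from the eikonal relations; it also makes explicit the role of the jump condition $[\partial_\nu q]=\chi g$ in the adjoint, which is where $\chi$ enters. If you carry out your plan, the simplest way to pin down $c(x_\pm)/\cos\theta_\pm$ is exactly the paper's computation in boundary-normal coordinates, so in practice the two arguments converge at the crucial step.
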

Let us note that it may happen that one (or both) of the geodesic
rays $r_\pm(x,\xi)$ does not intersect $\pd \Om$ (that is, speed is trapping). In that case, Theorem~\ref{T:Micro}
still holds if we replace $\chi(x_\pm, t_\pm)$ by $0$. 
\begin{proof}

We first intuitively describe the effect of $\Lo^* \Lo$ on the wave front set of a function $f$ supported inside $\overline \Om_0$. For simplicity, we assume that $f$ is microlocally supported near an element $(x,\xi) \in \cT^*\Om \setminus 0$. Let us analyze the effect of $\Lo$ to $f$ by considering the wave equation (\ref{E:PAT}). At time $t=0$, the singularity of $f$ at $(x,\xi)$ breaks into two equal parts  (see \cite[page 7]{US}). They induce the singularities of $p$ on the bicharacteristic rays $\mathcal{C}_\pm(x,\xi)$ originating at $(x,0,\xi,\tau = c(x) |\xi|)$ and $(x,0,-\xi,\tau = c(x) |\xi|)$ (see \cite{Ho1}). The projection of each bicharacteristic ray $\mathcal{C}_\pm(x,\xi)$ on the spatial domain $\R^d$ is the geodesic ray $r_\pm(x,\xi)$ on $\R^d$ (recalling that $\R^d$ is equipped with the metric $c^{-2}(x) \, dx^2$). Each of the geodesic ray hits the boundary $\pd \Om$ at a unique point $x_\pm$  and time $t_\pm$. The corresponding singularity of $p$ at $(x_\pm,t_\pm)$ is denoted by $(x_\pm,t_\pm,\xi_\pm,\tau_\pm)$. Its projection on $\cT^*_{(x_\pm,t_\pm)} (\partial \Om \times [0,T])$ induces a singularity of $g$ at $(x_\pm,t_\pm,\eta_\pm,\tau_\pm)$. Now, consider the adjoint equation (\ref{E:adjoint}) which defines $\Lo^*$. The singularity of $g$ at $(x_\pm,t_\pm,\eta_\pm,\tau_\pm)$ then induces two singularities of $q$ at $(x_\pm,t_\pm,\xi',\tau_\pm)$. Here, $\xi' = \eta_\pm  \pm \sqrt{c^{-2}(x_\pm) \tau^2 - |\eta_\pm|^2} \, \nu$ where $\nu$ is the normal vector of $\pd \Om$ at $x_\pm$ (note that one of the such $\xi'$ equals $\xi_\pm$). These two singularities propagate along two opposite directions when going backward in time, one into the domain $\Om$ (along the ray $\mathcal{C}_\pm(x,\xi)$ but in the negative direction) and one away from $\Om$. At $t=0$ the first one lands back to $(x,\xi)$ and the other one lands outside of $\Om$. This shows the pseudo-locality of $\Lo^* \Lo\colon f \to q_t(\edot,0)|_{\Om_0}$ and heuristically explains that $\Lo^* \Lo$ is a pseudo-differential operator. Our rigorous argument follows below.

Let us recall that up to a smooth term (e.g., \cite{TrFour}):
$$p(x,t) = \frac{1}{(2\pi)^d} \sum_{\sg =\pm} \int e^{i \phi_\sg(x,t,\xi)} a_\sg(x,t,\xi) \hat f(\xi) d \xi =: p_+(x,t) +p_-(x,t). $$
The phase function $\phi_\sg$, $\sg=\pm$, satisfies the eikonal equation
$$\partial_t \phi_\sg(x,t,\xi) + \sg |\nabla_x \phi_\sg(x,t,\xi)| =0, \quad \phi_\sg(x,0,\xi) = x \cdot \xi.$$
The amplitude function $a_\sg$ satisfies $$a(x,t,\xi) \sim \sum_{m=0}^\infty a_{-m}(x,t,\xi),$$ where $a_{-m}$ is homogeneous of order $-m$ in $\xi$. The leading term $a_0=a(x,t,\xi)$ satisfies the transport equation\footnote{In several references, the equation contains a zero order term. However, that term turns out to be zero.}
$$\big(\partial_t \phi_\sg  \, \partial_t - c^2(x) \, \nabla_x \phi_\sg \cdot \nabla_x) \, a_0(x,t,\xi) =0,$$ with the initial condition $a_0(x,\xi,0) = 1/2$.

Then, up to a smooth term, we obtain $g= (p_++p_-)|_{\partial \Om} =: g_+ + g_-$. Solving the adjoint problem (\ref{E:adjoint}), we obtain, up to a smooth term,  $\Lo^* \Lo f = \partial_t q_+{(\edot,0)} + \partial_t q_-{(\edot,0)}$. Here, $q_\sg$ (for $\sg= \pm$) is defined by
\begin{eqnarray} 
\left\{\begin{array}{l} c^{-2}(x)\,  q_{\sg, tt}(x,t) - \, \Delta q_\sg(x,t) =0, \quad (x,t) \in (\R^d \setminus \partial \Om) \times (0,T), \\[6 pt] q_\sg(x,T) =0, \quad q_{\sg,t}(x,T) =0, \quad x \in \R^d, \\[6 pt]  \big[q_{\sg} \big](y,t) =0, \Big[\frac{\partial q_\sg}{\partial \nu} \Big](y,t) =\chi(y,t) \, g_\sg(y,t), \quad (y,t) \in  \partial \Om \times (0,T). \end{array} \right.
\end{eqnarray}
Let us show that $f \to f_+ := \partial_t q_+{(\edot,0)}$ is a pseudo-differential operator with the principal symbol
$$\sg_+(x,\xi) = \frac{1}{4} \frac{\chi(x_+,t_+)}{\cos(\theta_+)}.$$
We recall that $x_+$ is the intersection of the positive geodesic ray $r_+(x,\xi)$ and $\partial \Om$, and $t_+$ is the time to travel along the geodesic from $x$ to $x_+$. Let $(x_+,t_+, \xi_+, \tau_+)$ be the corresponding element on the bicharacteristic and $(x_+,t_+,\eta_+,\tau_+)$ its projection on $\cT^*_{(x_+,t_+)} (\pd \Om \times [0,T])$. Then, $(x_+,t_+,\eta_+,\tau_+)$ is in the hyperbolic zone, that is $\tau_+ > c(x_+) \, |\eta_+|$. Let us show that the mapping $q_+|_{\partial \Om \times [0,T]} \to [\pd_\nu q_+]|_{\partial \Om \times [0,T]}$ is an elliptic pseudo-differential operator near $(x_+,t_+,\eta_+, \tau_+)$.

Indeed, for simplicity, we assume that locally near $x_+$, $\partial \Om$ is flat, and $h= q_+|_{\partial \Om \times [0,T]}$ is supported near $(x_+,t_+)$. We then can write $y=(y',0)$ for all $y \in \partial \Om$ and assume that $\Om \subset \{ x \in \R^d \colon x_n <0\}$. The parametrix $q_{\rm in}$ ({respectively} $q_{\rm out}$) for $q_+$ in $\Om$ ({respectively} $\Om^c$) near $(x_+,t_+)$ is of the form
\begin{equation} \label{E:para} q_{in/out} (x,t) = \frac{1}{(2 \pi)^d}  \sum_{s=F,B} \int_\R \int_{\R^{d-1}}  e^{i \, \psi_{s}(x,t,\eta,\tau)} d_s(x,t,\eta,\tau) \hat{h}(\eta,\tau) \, d\eta \, d\tau.\end{equation}
Here,
\begin{eqnarray*} d_F (y,t,\eta,\tau) + d_B (y,t,\eta,\tau)= 1,\quad \psi_s(y,t,\eta,\tau) = y' \cdot \eta - t \tau, \quad y \in \partial \Om,
\end{eqnarray*}
and
$$\hat{h}(\eta,\tau) = \int_{\R} \int_{\R^{d-1}} h(y',0,t) e^{i(- \eta \cdot y + t \tau)} \, dy' \, dt.$$
Similarly to $\phi_+$, the phase function $\psi = \psi_{F,B}$ satisfies the eikonal equation
$$|\partial_t \psi(x,t,\eta,\tau)| = c(x) \, |\nabla_x \psi(x,t,\eta,\tau)|.$$
In particular, we obtain
\[\partial_{x_n} \psi_F(y,t,\eta,\tau) = \sqrt{c^{-2}(y) \, \tau^2 -\eta^2}, \quad \partial_{x_n} \psi_B(y,t,\eta,\tau) = - \sqrt{c^{-2}(y) \, \tau^2 -\eta^2}, \quad y \in \pd \Om.\]
Roughly speaking, the phase function $\psi_F$ transmits (forward) wave from left to right (along the $x_n$ direction) and $\psi_B$ transmits (backward) wave to the opposite direction. Since $q(x,T)=q_t(x,T) =0$ for all $x \in \R^d$, we obtain that there is no backward wave inside $\Om$ and no forward wave outside $\Om$. That is,
\[q_{\rm in}(x,t) = \frac{1}{(2 \pi)^d}  \int_{\R} \int_{\R^{d-1}} e^{i \psi_F(x,t,\xi)} d_{\rm in} (x,t,\eta,\tau) \hat h(\eta,\tau) d \eta \, d \tau,\] and
\[q_{\rm out}(x,t) = \frac{1}{(2 \pi)^d}  \int_{\R} \int_{\R^{d-1}}   e^{i \psi_B(x,t,\xi)} d_{\rm out} (x,t,\eta,\tau) \hat h(\eta,\tau) d \eta \, d\tau. \]
Moreover, \[d_{\rm in} (y,t,\eta,\tau)=d_{\rm out} (y,t,\eta,\tau)=1,\quad y \in \pd \Om.\] Up to lower order terms, we obtain micirolocally near the hyperbolic element $(x_+, t_+,\eta_+,\tau_+)$
\[ [\partial_{x_n} q_+](y,t) = \frac{1}{(2 \pi)^d}  \int_{\R} \int_{\R^{d-1}}   e^{i (y' \cdot \eta - t \tau)} (-2 i) \sqrt{c^{-2}(y) \, \tau^2 -\eta^2} \, \hat h(\eta,\tau) d \eta \, d\tau.\]
That is, the mapping $q_+|_{\pd \Om} \to [\partial_{\nu} q_+]$ is an elliptic pseudo-differential operator at the element $(x_+, t_+, \eta_+, \tau_+)$  with principal symbol $ (-2 i) \sqrt{c^{-2}(x_+) \, \tau_+^2 -\eta_+^2} $. Therefore, the mapping $ [\pd_\nu q_+]_{\partial \Om \times [0,T]} \to q_+|_{\partial \Om \times [0,T]}$ is also a pseudo-differential operator near $(x_+,t_+,\eta_+, \tau_+)$ with the principal symbol $\frac{1}{(-2 i) \sqrt{c^{-2}(x_+) \, \tau_+^2 -|\eta_+|^2}}$.

Noting that $f \to \chi g_+$ and $q_+|_{\pd \Om \times [0,T]} \to q_+( \edot,t)|_{\Om}$ are FIOs, we obtain $f \to q_+(\edot,t)|_{\Om}$ is also an FIO. We, hence, can write the parametrix for $q_+$ in $\Om$ in the form
\[q_+(x,t) = \frac{1}{(2 \pi)^d}\int_{\R^d} e^{i \phi_+(x,t,\xi)} b(x,t,\xi) \hat f(\xi) d \xi. \]
In particular,
\begin{equation} \label{E:qplus} q_+(y,t) = \frac{1}{(2 \pi)^d}\int_{\R^d} e^{i \phi_+(y,t,\xi)} b(y,t,\xi) \hat f(\xi) d \xi, \quad y \in \pd \Om.\end{equation}
On the other hand,
\[\chi(y,t) \, g_+(y,t) = \frac{1}{(2 \pi)^d} \int_{\R^d} e^{i \phi_+(y,t,\xi)} \chi(y,t) \, a(y,t,\xi) \hat f(\xi) d \xi, \quad y \in \pd \Om.\]
Since $\chi \, g_+=[\pd_\nu \, q_+] \to q_+$ is a pseudo-differential with principal symbol $\frac{1}{(-2 i ) \, \sqrt{c^{-2}(x_+) \, \tau_+^2 -|\eta_+|^2}}$ at $(x_+,t_+,\eta_+,\tau_+)$, the principal part $b_p$ of $b$ satisfies
\[b_p(x_+,t_+,\xi_+) =- \frac{\chi(x_+,t_+) }{2 \, i \, \sqrt{c^{-2}(x_+) \tau_+^2 - |\eta_+|^2}} \, a_0(x_+,\xi_+,t_+).\]
Since $b_{\rm in}$ and $a_0$ satisfy the same transport equation on the geodesic ray $r_+(x,\xi)$, the above equation implies
\[b_{p}(x,0,\xi) = - \frac{\chi(x_+,t_+) }{2 \, i \, \sqrt{c^{-2} \tau_+^2 - |\eta_+|^2}} \, a_0(x,0,\xi)= - \frac{\chi(x_+,t_+)}{4 \, i \, \sqrt{c^{-2} \tau_+^2 - |\xi_+|^2}}.\]

Noting that $\partial_t \phi_+(x,0,\xi) = - c(x) |\nabla_x \phi_+(x,0,\xi)| = - c(x) \, |\xi|$,
we obtain,  from (\ref{E:qplus}), up to lower order terms,
$$f_+(x) = \frac{1}{(2 \pi)^d}\int_{\R^d} e^{i x \cdot \xi} \frac{ c(x) |\xi| \, \chi(x_+,t_+)}{4 \, \sqrt{c^{-2}(x_+) \tau_+^2 - |\xi_+'|^2}}  \hat f(\xi) \, d \xi.$$
Noting that $c(x) |\xi| =c(x_+) |\xi_+|$,\footnote{This comes from the fact that $c(x) |\xi| =\tau$ and $\tau$ is constant on the bicharacteristic rays (see, e.g., \cite{nguyen2011singularities}).} we obtain the mapping $f \to f_+$ is a pseudo-differential operator with principal symbol
$$\frac{c(x) |\xi| \, \chi(x_+,t_+)}{4  \sqrt{c^{-2}(x_+) \tau_+^2 - |\eta_+|}} = \frac{  c(x_+) \, |\xi_+|  \, \chi(x_+,t_+)}{4 \sqrt{|\xi_+|^2- |\eta_+|}}  = \frac{c(x_+) \, \chi(x_+,t_+)}{4 \cos (\theta_+)}. $$

Repeating the above argument for $f \to f_-$,  we finish the proof.
\end{proof}

\begin{remark} Let us make the following observations:
\begin{enumerate}[leftmargin=4em,label=(\alph*)]
\item
The calculus of symbols can be explained more intuitively by considering the current set up as the limit of the open set measurement. This will be discussed in Section~\ref{S:Open}.

\item We notice that the function $\chi$ plays the role of preconditioning for the inverse problem of PAT. Formula (\ref{E:Symbol}) may give us some hint on how to make a good choice of $\chi$. Indeed, let us consider the case $c=1$, $S$ is the sphere of radius $R$, and $\chi(y,t)=t$. Since the geodesics are straight lines, we observe that $\theta_+=\theta_-=\theta$ and hence
$$\sg_0(x,\xi) = \frac{t_+ + t_-}{4 \, \cos \theta}.$$
We notice that $t_++ t_-$ is the length of the line segment connecting $x_+$ and $x_-$. A simple geometric observation then gives:
$$\sg_0(x,\xi) =\frac{R}{2}.$$
We obtain
\begin{equation} \label{E:Finch} \Lo^* \Lo = \frac{R}{2} \, \Io + \Ko, \end{equation} where $\Ko$ is a compact operator. Therefore, in such a situation, the CG method for the inverse problem of PAT converges superlinearly (see the discussion of the CG method in Section~\ref{S:IterativeH}).
We note that (\ref{E:Finch}) can be derived from the results in \cite{FPR,FHR}. Indeed, for odd $d$, \cite{FPR} even gives $\Lo^* \Lo = \frac{R}{2} \Io$.

The above discussion also suggests the choice of $\chi(y,t)=t$ when the speed is almost constant. The in-depth discussion on the preconditioning, however, is beyond the scope of this article. 

\end{enumerate}
\end{remark}

Let us recall the time-reversal technique for PAT (see, e.g., \cite{FPR,HKN,US}). Consider the time reversal wave equation
\begin{eqnarray*}
\left\{\begin{array}{l}c^{-2}(x) \,  q_{tt}(x,t) -  \Delta q(x,t) =0, \quad (x,t) \in \Om \times [0,T],  \\[6 pt] q(x,T) =\phi(x), \quad q_t(x,T) =0, \quad x \in \Om, \\[6 pt]  q(x,t)= \chi(x,t) \, g(x,t), \quad (x,t) \in  \partial \Om \times [0,T]. \end{array} \right.
\end{eqnarray*}
Here, $\phi$ is the harmonic extension of $\chi(x,T) \, g(x,T)|_{x \in \partial \Om}$ to $\overline \Om$. The time-reversal operator is defined by $\Lambda g = q{(\edot,0)}$. It is proved in  \cite[Theorem 1]{US} that, if $\chi \equiv 1$,  $$\|\Io - \Lambda \Lo\|_{} <1.$$
This suggest that $\Lambda \Lo$ can be used as the first step for a iterative method  (see  \cite[Theorem 1]{US}); it is called iterative time reversal method or Neumann series solution (see also \cite{QiaSteUhlZha11} for the thorough numerical discussion and  \cite{stefanov2011thermo} for nonsmooth sound speed).
It is shown in \cite{US} that $\Lambda \Lo$ is a pseudodifferential operator of order zero with the principal symbol
$$\sigma_0(x,\xi) = \frac{1}{2}\big(\chi(x_+,t_+)  +\chi(x_+,t_-)\big).$$
This is different from the symbol of $\Lo^* \Lo$ shown in Theorem~\ref{T:Micro}. We, in particular, conclude that the adjoint operator $\Lo^*$ is fundamentally different from the time reversal operator $\Lambda$. We also note that no proof for the convergence of iterative time reversal method is available for limited data problem, even under the visibility condition.
However, numerically it works reasonably well in this situation (as demonstrated in \cite{QiaSteUhlZha11} and also Section~\ref{S:Num}).

\subsection{Open domain observations revisited} \label{S:Open}

Let us consider the setup used in \cite{arridge2016adjoint}.
Namely, let us consider the operator $\Lo_\omega$ defined by
$$(\Lo_\omega f)(x,t) = \omega(x,t) \, p(x,t). $$

Here, $p$ is the solution of \eqref{E:PAT} with initial pressure $f$, and
$0 \leq \omega \in C^\infty(\R^d \times [0,T])$ is the window function, whose support determines the accessible region for the data.
We assume that $\supp  (\omega) = \mB \times [0,T]$, where $\mB \subset \R^d$ is an open band.
That is, $\R^d \setminus \overline \mB= \Om \cup \Om'$, where $\Om \cap \Om' =\emptyset$, $\Om$ is bounded and $\Om'$ is unbounded.
We again, assume that $f$ is supported in $\overline \Om_0$ where $\Om_0 \Subset \Om$, and are interested in the problem of finding $f$ given $\Lo_\omega f$.
It can be solved by the iterative methods described in Section~\ref{S:IterativeH},
which we do not elaborate further in this article.  We, instead, focus on analyzing the adjoint operator in this setup.

We propose the following method to compute the adjoint of $\Lo_\omega$.  Consider the time-reversed problem
\begin{eqnarray} \label{E:Adjoint}
\left\{\begin{array}{l}  c^{-2}(x) \, q_{tt}(x,t) -  \Delta q(x,t) = -  \omega(x,t) \, h(x,t), \quad (x,t) \in \R^d \times (0,T), \\[6 pt] q(x,T) =0, \quad q_t(x,T) =0, \quad x \in \R^d. \end{array} \right.
\end{eqnarray}
We define
$$\Lo^*_\omega(h) = \partial_t q(\edot,0).$$
Let $\bar h(\edot, t) = h(\edot,t) - h(\edot,T)$ and $\bar q$ be the solution of (\ref{E:Adjoint}) with $h$ being replaced by $\bar h$. We define (recalling that $\Pi$ denotes the projection from $\Hs_1$ 
onto $H^1_0(\Om_0)$)
$$\overline \Lo^*_\omega(h) = \Pi[ \bar q_t(\edot,0)].$$

Let us recall the space $\Hs_i$ defined at the beginning of Section~\ref{S:Adjoint}. Similarly to the spaces $\Ds_i$, 
we define
\begin{eqnarray*}
\widetilde \Ds_0 &&:= \left\{ h \colon  \|h\|_{\widetilde \Ds_0} := \|\sqrt{\omega} \, h \|_{L^2(\mB \times [0,T])} < \infty\right\}, \\
\widetilde \Ds_1 &&:= \left\{h \colon h(\edot,0) \equiv 0 \mbox{ in } \mB,~ \|h\|_{\widetilde \Ds_1} := \|h_t\|_{\widetilde \Ds_0}< \infty \right\}.\end{eqnarray*}
The following lemma shows that $\Lo^*_\omega$ and $\overline \Lo_\omega^*$ are the adjoints of $\Lo_\omega$, given the correct mapping spaces. 
\begin{theorem}\label{T:adjointopen}
We have
\begin{enumerate}[leftmargin=4em, label=(\alph*)]
\item  For all $f \in H_0(\Om)$ and $h \in \widetilde \Ds_0$,
$$\left<\Lo_\omega f, h \right>_{\widetilde \Ds_0} = \left<f,\Lo_\omega^* 
h\right>_{\Hs_0}.$$
That is, $\Lo_\omega^*$ is the adjoint of $\Lo_\omega \colon \Hs_0 \to \widetilde \Ds_0$.
\item  Assume that $\omega$ is independent of $t$, then for all $f \in H_1(\Om)$ and $ h \in \widetilde \Ds_1$,
$$\left<\Lo f, h \right>_{\widetilde \Ds_1} = \left<f,  \overline \Lo_\omega^*  h \right>_{\Hs_1}.$$
That is, $ \overline \Lo_\omega^*$ is the adjoint of $\Lo_\omega: \Hs_1 \to \widetilde \Ds_1$. 
\end{enumerate}
\end{theorem}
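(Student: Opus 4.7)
The plan is to mimic the proof of Theorem~\ref{T:adjoint}, with the key simplification that equation~(\ref{E:Adjoint}) holds across all of $\R^d$ with a bulk source term $\omega h$, so that no jump conditions at $\pd \Om$ arise. I would start from the weak formulation of (\ref{E:Adjoint}): for any test function $v \in C^\infty(\R^d \times \overline \R)$ with $v(\edot,t)$ compactly supported in $x$, one has
\begin{multline*}
 -\bigl(c^{-2} q_t(\edot,0), v(\edot,0)\bigr) + \bigl(c^{-2} q(\edot,0), v_t(\edot,0)\bigr) \\
+ \int_0^T \int_{\R^d} q(x,t)\bigl[c^{-2}(x)\, v_{tt}(x,t) - \Delta v(x,t)\bigr]\, dx\, dt
 = -\int_0^T \int_{\R^d} \omega(x,t)\, h(x,t)\, v(x,t)\, dx\, dt ,
\end{multline*}
obtained by integrating by parts twice in $t$ (using $q(\edot,T) = q_t(\edot,T) = 0$) and twice in $x$.

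For part (a), I take $v = p$ where $p$ solves the forward PAT equation~(\ref{E:PAT}) with initial data $f \in C_0^\infty(\Om_0)$. The wave operator annihilates the bulk term, $v_t(\edot,0) = p_t(\edot,0) = 0$ eliminates the second boundary term, and $v(\edot,0) = f$, leaving
\[
\bigl(c^{-2} q_t(\edot,0), f\bigr) = \int_0^T \int_{\R^d} \omega(x,t)\, h(x,t)\, p(x,t)\, dx\, dt = \langle \Lo_\omega f, h\rangle_{\widetilde \Ds_0}.
\]
The left-hand side is precisely $\langle \Lo_\omega^* h, f\rangle_{\Hs_0}$ by the definition of the $\Hs_0$ inner product, and a density argument extends the identity from $C_0^\infty(\Om_0)$ to $\Hs_0$, proving (a).

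For part (b), I replace $h$ by $\bar h = h - h(\edot,T)$ and test against $v = p_{tt}$. Using $c^{-2}(x)\, p_{tt}(x,0) = \Delta f(x)$, integrating by parts once in $t$ on the right-hand side (the endpoint contributions vanish because $\bar h(\edot,T) \equiv 0$ and $\pd_t \Lo_\omega f(\edot,0) \equiv 0$), and exploiting the hypothesis that $\omega$ is independent of $t$, gives
\[
\bigl(\bar q_t(\edot,0), \Delta f\bigr) = -\int_0^T \int_{\R^d} \omega(x)\, h_t(x,t)\, \pd_t \Lo_\omega f(x,t)\, dx\, dt.
\]
The main obstacle is to justify integration by parts in $x$ on the left, which requires $\bar q_t(\edot,0)|_{\Om_0} \in H^1(\Om_0)$. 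I would establish this by exactly the localization/elliptic-regularity argument from Theorem~\ref{T:adjoint}(b): picking $\Om' $ with $\Om_0 \Subset \Om' \Subset \Om$, noting that $\omega$ vanishes on $\Om'$ (since $\supp \omega \subset \mB$ and $\Om \cap \mB = \emptyset$), and using that $\bar q$ solves a homogeneous wave equation on $\Om'$ so that $\Delta \bar q_t(\edot,0) \in H^{-1}(\Om')$ and $\bar q_t(\edot,0) \in L^2(\Om')$; then \cite[Theorem 9.1 (Chapter 2)]{lions2012non} applied to $\mu\, \bar q_t(\edot,0)$ with a cutoff $\mu$ yields the desired $H^1$ regularity. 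Finally, integrating by parts in $x$ on the left, subtracting the harmonic extension $\phi$ of $\bar q_t(\edot,0)|_{\pd \Om_0}$, and recalling $\Pi[\bar q_t(\edot,0)] = \bar q_t(\edot,0) - \phi$ produces
\[
\langle \overline \Lo_\omega^* h, f \rangle_{\Hs_1} = \int_{\Om_0} \nabla[\bar q_t(x,0) - \phi(x)] \cdot \nabla f(x)\, dx = \langle h, \Lo_\omega f\rangle_{\widetilde \Ds_1},
\]
and a density argument on $C_0^\infty(\Om_0) \subset \Hs_1$ finishes (b).
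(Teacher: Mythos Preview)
Your proposal is correct and follows precisely the route the paper indicates: the paper does not write out a proof of Theorem~\ref{T:adjointopen} but simply states that it is similar to that of Theorem~\ref{T:adjoint} and omits the details. Your adaptation---replacing the surface source $\chi g\,\delta_{\partial\Omega}$ by the bulk source $\omega h$, testing against $p$ for part~(a) and $p_{tt}$ for part~(b), and reusing the cutoff/elliptic-regularity step to obtain $\bar q_t(\cdot,0)\in H^1(\Omega_0)$ (noting that $\omega$ vanishes on $\Omega'\Subset\Omega$)---is exactly the intended argument.
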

The proof of Theorem~\ref{T:adjointopen} is similar to that of Theorem~\ref{T:adjoint}. We skip it for the sake of brevity. Let us notice that our definition of $\Lo_\omega^*$ is slightly different from \cite{arridge2016adjoint}. It is motivated by case of the observation on a surface discussed the previous section. Our definition matches with that in \cite{arridge2016adjoint} if $ \omega(\edot,T) \,  h (\edot,T) \equiv 0$.

The following theorem gives us a microlocal characterization of the normal operator $\Lo_\omega^* \Lo_\omega$.

\begin{theorem}
The operator $\Lo_\omega^* \Lo_\omega$ is a  pseudo-differential operator of order zero whose principal symbol is
$$\sg_0(x,\xi) = \frac{1}{4} \Big(\int_0^T c^2(x_+(t)) \, \omega(x_+(t),t) dt +  \int_0^T c^2(x_-(t))  \, \omega(x_-(t),t) dt \Big).$$
Here, $x_\pm(t)=r_\pm(x,\xi)(t)$ is unit speed geodesic ray originated from $x$ at time $t=0$ along the direction of $\pm \xi$.
\end{theorem}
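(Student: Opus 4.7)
The strategy is to mirror the proof of Theorem~\ref{T:Micro}, but with the hypersurface observation replaced by the open spacetime band $\mB\times[0,T]$; in particular no boundary-jump analysis is needed, and the computation reduces to a direct Duhamel/parametrix argument in the full space.

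I would begin by writing the wave solution via the standard half-wave FIO parametrix
\[
p(x,t) \,\equiv\, \frac{1}{(2\pi)^d}\sum_{\sigma=\pm}\int e^{i\phi_\sigma(x,t,\xi)}\,a_\sigma(x,t,\xi)\,\hat f(\xi)\,d\xi,
\]
modulo a smoothing remainder, where $\phi_\sigma$ solves the eikonal equation for the wave operator with $\phi_\sigma|_{t=0}=x\cdot\xi$ and the principal amplitude $a_\sigma$ solves a homogeneous first-order transport equation along the bicharacteristic projecting to the unit-speed geodesic $r_\sigma(x,\xi)$, with $a_\sigma|_{t=0}=1/2$. By the definition of $\Lo^*_\omega$ and a composition, $\Lo^*_\omega\Lo_\omega f = q_t(\cdot,0)$ where $q$ solves the full-space wave equation with a distributed source involving the weight $\omega$ and $p$, with zero Cauchy data at $t=T$.

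I would split $q=q_++q_-$, with $q_\sigma$ driven by the $\sigma$-half-wave contribution of the source. Because that source is microlocally supported on a single sheet of the characteristic variety of the wave operator, propagation of singularities together with microlocal ellipticity of the wave operator off that sheet imply that $q_\sigma$ admits a parametrix with the \emph{same} phase $\phi_\sigma$ and a new amplitude $b_\sigma$, while the cross-sheet contributions (feeding $p_\pm$ into the $q_\mp$ component) are smoothing. Substituting this ansatz into the inhomogeneous wave equation and matching principal symbols yields an inhomogeneous first-order transport equation for $b_\sigma$ along the bicharacteristic, whose source is proportional to $c^2\omega\, a_\sigma$ (the factor $c^2$ arising from rewriting $c^{-2}q_{tt}-\Delta q = F$ as $q_{tt}-c^2\Delta q = c^2 F$), with zero final condition $b_\sigma(x,T,\xi)=0$. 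Integrating backward from $t=T$ down to $t=0$ along the bicharacteristic expresses $b_\sigma(x,0,\xi)$ as a line integral of $c^2\omega$ along the geodesic $x_\sigma(t)$. Differentiating the parametrix in $t$ at $t=0$ then brings down the phase-derivative factor $i\partial_t\phi_\sigma|_{t=0}=-i\sigma c(x)|\xi|$, which combines with the factors from the transport operator, with $a_\sigma|_{t=0}=1/2$, and with a further $1/2$ from the half-wave splitting, to produce the claimed contribution; summing over $\sigma=\pm$ gives the stated formula for $\sigma_0$.

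The main obstacle will be the careful bookkeeping of constants—the factors of $i$, $\sigma$, $c$, and $1/2$ threaded through the transport equation and through the evaluation of $\partial_t$ at $t=0$—which must align so that the final prefactor is exactly $1/4$. A secondary subtlety is verifying that $\Lo^*_\omega\Lo_\omega$ is genuinely pseudo-differential rather than a more general FIO; this follows from a transversality argument showing that the composition of the canonical relations of $\Lo_\omega$ and its adjoint collapses to the diagonal in $\cT^*\Om_0\setminus 0$, exactly as in the proof of Theorem~\ref{T:Micro}.
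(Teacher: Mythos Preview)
Your approach is correct but proceeds differently from the paper. The paper invokes Duhamel's principle to decompose $q(\cdot,t)=\int_t^T q(\cdot,t;s)\,ds$, where each $q(\cdot,\cdot;s)$ solves a \emph{homogeneous} Cauchy problem with data $c^2(\cdot)\,\omega(\cdot,s)\,p(\cdot,s)$ at $t=s$; for fixed $s$ the authors track singularities geometrically (the $\tfrac12$ split at $t=0$, multiplication by $c^2\omega$ at $t=s$, a second $\tfrac12$ split going backward, with the outward half leaving $\Omega$), obtaining the pointwise symbol $\tfrac14\bigl(c^2(x_+(s))\omega(x_+(s),s)+c^2(x_-(s))\omega(x_-(s),s)\bigr)$, and then integrate in $s$. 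You instead build a parametrix for the \emph{inhomogeneous} equation directly: the same half-wave phase $\phi_\sigma$, a new amplitude $b_\sigma$ satisfying an inhomogeneous transport ODE along the bicharacteristic with source $\propto c^2\omega\,a_\sigma$ and zero data at $t=T$, so the $s$-integral appears as the solution of that ODE rather than via Duhamel. Your observation that $\mathrm{WF}(\omega p_\sigma)$ lies on a single sheet, whence $q_\sigma$ admits a single-phase parametrix by propagation of singularities plus ellipticity off $\Sigma_\sigma$, is exactly what replaces the paper's ``one half goes back, the other lands outside $\Omega$'' step. The trade-off: the paper's argument makes the $\tfrac14=\tfrac12\cdot\tfrac12$ transparent but is admittedly heuristic (the authors write that a rigorous version via parametrices is ``skipped for simplicity''); your route is the more systematic FIO calculus and is closer to a complete proof, at the cost of the constant bookkeeping you flag --- in particular, the transport operator $L_\phi=-2c^{-2}\phi_t\partial_t+2\nabla_x\phi\cdot\nabla_x+(\text{0th order})$ carries a factor that forces the principal part of $b_\sigma$ to be of order $-1$, and the $\partial_t$ at $t=0$ restores order $0$; tracking this carefully indeed yields the prefactor $\tfrac14$.
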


\begin{proof}

Let us consider the solution $q$ of the time reversed problem (\ref{E:Adjoint}) with $h=p|_{\mB \times [0,T]}$. Applying the Duhamel's principle, we can write
$$q(\edot,t) = \int_t^T q(\edot,t; s) ds,$$
where $q(x, t;s)$ satisfies
\begin{eqnarray*} 
\left\{\begin{array}{l} c^{-2}(x) \, q_{tt}(x, t;s) -  \Delta q(x,  t;s) = 0, \quad (x,t) \in \R^d \times (0,s), \\[6 pt] q(x, s;s) = 0, \quad q_t(x, s;s) = c^2(x) \, \omega (x,s) \, p (x,s), \quad x \in \R^d. \end{array} \right.
\end{eqnarray*}
Therefore, denoting $q'(\edot, t;s) = q_t(\edot, t;s)$,
$$\Lo_\omega^* (p) =  q_t(  \edot,0) = \int_0^T q'(  \edot,0;  s) ds - q( \edot,0;0) =\int_0^T q'(  \edot,0;  s) ds = \int_0^T  \Lo(s)(p) \, ds.$$
Let us show that $f \to \Lo(s)(p) \coloneqq q'(\edot,0;s)$ is a pseudo-differential operator with the principal symbol
$$\sigma_0(x,\xi;s) = \frac{1}{4}\big(c^2( x_+(s)) \, \omega( x_+(s),s) + 
c^2( x_+(s)) \, \omega(x_-  (s),s)\big).$$
Indeed, we note that $q'(x, t;s)$ satisfies
\begin{equation} \label{E:tau'} \left\{\begin{array}{l} c^{-2}(x) \, q'_{tt}(x, t;s) -  \Delta q'(x, t;s) = 0, \quad (x,t) \in \R^d \times (0,s), \\[6 pt] q'(x, s;s) = c^2(x) \, \omega  (x,s) \, p (x,s), \quad q'_t(x, s;s) = 0, \quad x \in \R^d. \end{array} \right. \end{equation}
Assume that $(x,\xi) \in (\cT^* \Om \setminus 0) \cap \WF(f)$ and consider the propagation of  $p$, governed by the wave equation (\ref{E:PAT}). The singularity of $f$ at $(x,\xi)$ is broken into two equal parts propagating along the geodesic rays $r_\pm(x,\xi)$. Let us consider the propagation along $r_+(x,\xi)$. The projection of the propagated singularity at $ t=s$ to $\cT^*\R^d$ produces a corresponding singularity of $p(.,s)$. Let us consider the propagation of that singularity due to the equation (\ref{E:tau'}). Firstly, due to the end time condition at $ t=s$, it is multiplied by $c^2(x) \, \omega( x(s),s)$. Then, it is broken into two equal parts propagating along two opposite directions (in reversed time). One of them hits back to $(x,\xi)$ at $t=0$ (this travel along $r_+(x,\xi)$ but in negative direction) and the other one lands outside of $\Om$. Therefore, the strength of the recovered singularity at $(x,\xi)$, as just described, is $\frac{1}{2} c^2( x_+(s)) \, \omega( x_+(s),s)$ times that of the part of original singularity at $(x,\xi)$ propagating along the ray $r_+(x,\xi)$. Similar argument for the negative ray $r_-(x,\xi)$ gives us the second recovered singularity with the magnitude $\frac{1}{2} c^2( x_+(s)) \, \omega( x_+(s), s)$ of the part of original singularity at $(x,\xi)$ propagating along the negative ray $r_-(x,\xi)$. Since each part (propagating on each direction) is half of the original singularity, we obtain the recovered singularity is  $\frac{1}{4} \big(c^2( x_+(s)) \,  \omega( x_+(s),s) + c^2( x_-(s)) \, \omega(x_-  (s),s)\big)$ of the original singularity. This intuitively, shows that $\Lo(s) \,  \Lo_\omega$ is a pseudo-differential operator of order zero with the principal symbol $$\frac{1}{4} \Big(c^2( x_+(s)) \, \omega( x_+(s),s) + c^2( x_-(s)) \, \omega(x_- (s),s)\Big).$$
A more rigorous proof can be done by writing the corresponding form of the parametrix for the wave equations (\ref{E:PAT}) and (\ref{E:tau'}). However, we skip it for the sake of simplicity.

Now, since $\Lo^*_\omega \, \Lo_\omega = \int_0^T \Lo(s) \, \Lo_\omega \, ds$,  we obtain that $\Lo^*_\omega \, \Lo_\omega$ is a pseudo-differential operator of order zero with the symbol
 $$\frac{1}{4} \int_0^T \Big(c^2(x_+(s)) \,  \omega(x_+(s),s) + c^2( x_-(s)) \,  \omega(x_- (s),s)\Big) ds.$$
This finishes the proof of the theorem. \end{proof}

\begin{remark} Let us consider $\omega= \omega_\eps$ to be a family of smooth function that approximate the $\chi(x,t)\, \delta_{\partial \Om}(x)$. Then, the setup for the observation on the surface $\partial \Om$ is just the limit as $\eps \to 0$. We note that
$$\lim_{\eps \to 0} \int_0^T c^2( x_\pm(s)) \,  \omega( x_\pm(s), s) ds = \frac{c(x_\pm) \, \chi(x_\pm,t_\pm)}{\cos \theta_\pm}. $$
Therefore,
$$\lim_{\eps \to 0}\sg_0(x,\xi) = \frac{1}{4} \left(\frac{c(x_+) \, \chi(x_+,t_+)}{\cos(\theta_+)} + \frac{c(x_-) \, \chi(x_-,t_-)}{\cos(\theta_-)} \right),$$
which is the symbol $\Lo^* \Lo$ in Theorem~\ref{T:Micro}.
\end{remark}
\section{Numerical experiments}
\label{S:Num}

In this section we  implement the iterative methods presented in Section~\ref{S:IterativeH} for PAT for the observation on a surface $\partial \Om$. We will employ the explicit formulation of $\Lo^* = \Lo_0^* $ presented in Section~\ref{S:Adjoint}. We will chose the weight function $\chi$ to be independent of the time variable $t$.\footnote{Other choices of $\chi$ may result in better conditioning of the problem. However, studying optimal preconditioning is beyond the scope of this article.}  We note that under the visibility condition,  Landweber's and the CG  methods have linear convergence
in $H_0(\Om_0) = L^2(\Om_0)$, since by Theorem~\ref{T:well-posed} the inversion of $\Lo f =g$ is well-posed in this situation.  If using $\Lo^* =\Lo_1^*$, we would obtain the linear rate of convergence in  $\Hs_1   \simeq H^1(\Om_0)$. However, we will refrain from that choice.

We only consider two-dimensional simulations and assume $\Om$ to be a disc centered at the origin:
$\Om = B_R(0) = \set{   x \in \R^2 \colon  \lvert x \rvert  <  R} $. All presented results assume  non-constant sound speed.
We consider the following test cases
\begin{enumerate}[label=(T\arabic*)]
\item\label{t1}  Complete data;
\item\label{t2} Partial data, visible phantom (i.e., the visibility condition holds);
\item\label{t3} Partial data, invisible phantom (i.e., the invisibility condition holds);
\end{enumerate}
We will compare the results for the Landweber's method, Nesterov's method,  the CG method
(as proposed in the present paper) as well as the  iterative time reversal  algorithm proposed
in~\cite{QiaSteUhlZha11}.  Thereby we investigate the numerical speed of  convergence as well as stability and accuracy of all these algorithms. For Landweber's  and Nesterov's method we have taken the step size equal to $\gamma =1$, which worked well in all our numerical simulations.

As described in Subsection~\ref{sec:realization} the proposed iterative schemes are implemented by numerical
realizations of all involved operators. Thereby the most crucial steps  are accurate discrete solvers
for the forward and backward wave equation. For that purpose we implemented the $k$-space method
(described in Appendix~\ref{sec:kspace})
that is  an efficient FFT based numerical solution method  that does not suffer from numerical dispersion
that arises when solving the wave equation with standard finite difference or finite element methods. We note that to compute the adjoint operator $\Lo^*$ numerically, we make use of the formulation (\ref{E:reform}).

 \begin{figure}[tbh!]\centering
\includegraphics[width =0.7\textwidth]{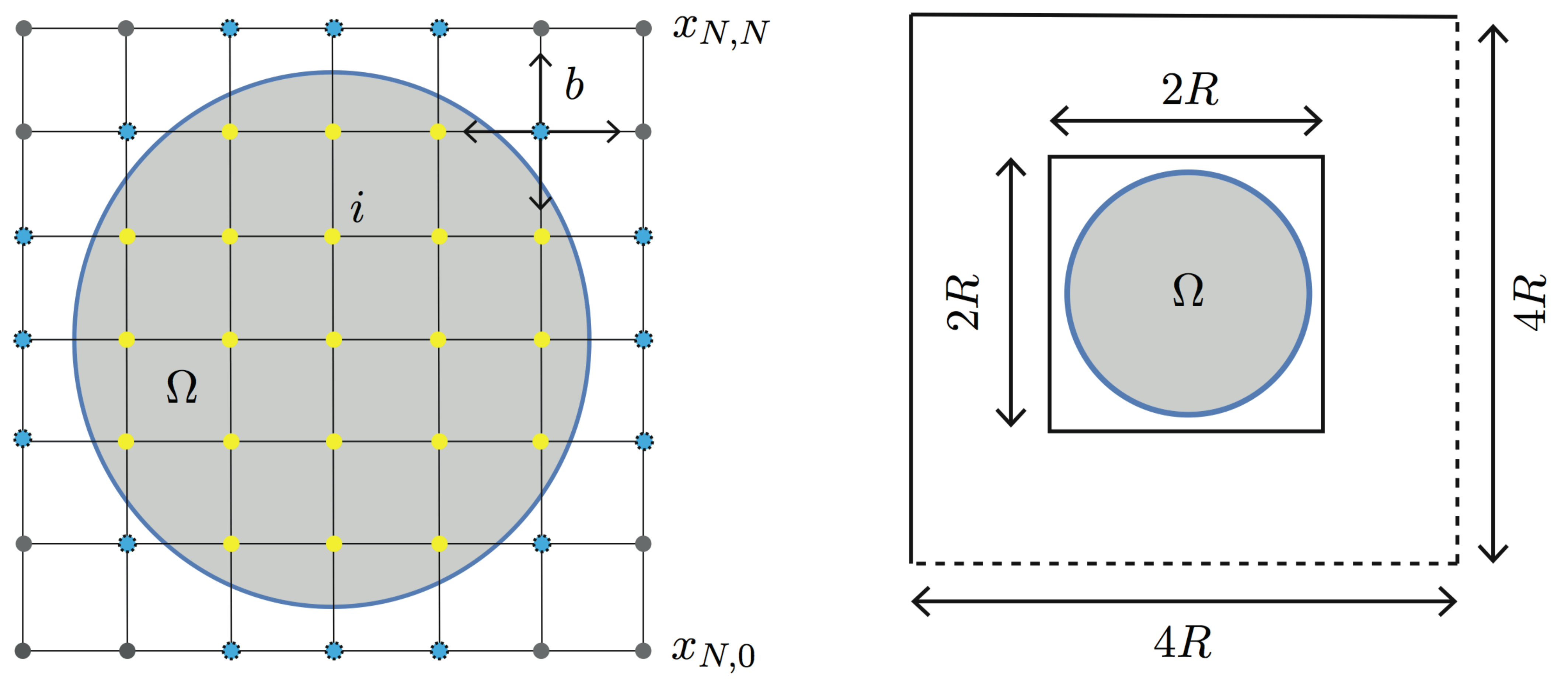} \caption{Left: The discrete domain $\Om_N$ is defined as the set of all indices $i \in \set{0, \dots, N}^2$ with $x_i \in \Omega$.
The index $b$ is contained in the discrete boundary $\partial \Om_N$, because one of
its  neighbors is contained in $\Om_N$. Right: The domain $[-R,R]$ is embedded in a larger domain  $[-2R,2R]$  to avoid
effects due to periodization.\label{fig:discrete}}
\end{figure}

\subsection{Numerical realization}
\label{sec:realization}

The iterative approaches for solving the equation $\Lo f=g$ are implemented with
discrete counterparts  of all operators introduced above. Thereby the function $f \colon \R^2 \to \R$
is represented by a discrete vector
\begin{equation*}
 \fnum =  (f(x_{i}))_{i_1, i_2 = 0}^{N} \in \R^{(N+1) \times (N+1)} \,,
\end{equation*}
where $x_i = (-R,-R) + 2i R/N$  for $i  = (i_1, i_2)\in \set{0, \dots, N}^2$ are equidistant grid points in the square
$[-R,R]^2$. We define the discrete domain $\Om_N \subset   \set{0, \dots, N}^2$ as the set of
all indices $i$ with $x_i \in \Omega$. Further, the discrete measurements are made on parts of the
discrete  boundary $\partial \Om_N$, that is  defined as the set of all elements
$b = (b_1,b_2)  \in \set{0, \dots, N}^2 \setminus \Om_N$   for which at least one of the discrete neighbors $(b_1+1,b_2), (b_1-1,b_2), (b_1,b_2+1), (b_1,b_2-1)$
 is  contained  in $\Om_N$, see the right image in Figure~\ref{fig:discrete}. All phantoms in our numerical simulations
 are chosen to have support  in a compact subset $\Om_0$ of $\Om$. We will choose the discrete version of $\Om_0$ to be the set $\{x_i \colon  i \in \Om_N\}$.

The discrete forward operator can be written in the form
 \begin{equation} \label{eq:fwdN}
	 \Lnum_{N,M}
	 \colon \R^{(N+1) \times (N+1) }  \to \R^{ |\partial \Om_N| \times (M+1)}
	 \colon \fnum \mapsto (\Num_{N,M} \circ \Wnum_{N,M}) \fnum \,.
 \end{equation}
Here $M+1$ is the number of equidistant temporal sampling points  in  $[0,T]$,
$\Wnum_{N,M} $ is a discretization of the  solution operator for the wave equation and
$\Num_{N,M} $ the linear operator that restricts the discrete pressure to spatial grid points
restricted to $\partial \Om_N  \subset \set{0, \dots, N}^2$. The adjoint operator is then
given by $\Lnum_{N,M}  = \Dnum_0 \circ  \Wnum_{N,M}^* \circ \Num_{N,M} ^*$, where
$\Num_{N,M}^*$ is the embedding operator from $\R^{|\partial \Om_N|}$ to
$\R^{(N+1) \times (N+1)}$, $\Wnum_{N,M}^*$  is the solution operator the adjoint wave equation \eqref{eq:wave2ad}, and $\Dnum_0$
a discretization of the time derivative evaluated at $t=0$  and restricted to  $\Om_0$.

For computing the  solution operator  $\Wnum_{N,M}$  we use the $k$-space method
described in Appendix~\ref{sec:kspace}. In the actual implementation of Algorithm~\ref{alg:kpace},
the Fourier transform of the function  $f$ is replaced by the FFT algorithm applied to $\fnum$
(and likewise for the  inverse Fourier transform). When  applied directly to the given
function values, the FFT algorithm causes the numerical solution to be $2R$ periodic.
For the numerical solution of the wave equation  we therefore  embed the data vector  $\fnum \in \R^{(N+1)\times (N+1)} $ in a larger vector in $\R^{(2N+1)\times (2N+1)}$, whose entries
correspond to sampled values on an equidistant grid in $[-2R, 2R]^2$ (see the right image in
Figure~\ref{fig:discrete}).
As the sound speed is assumed to be equal to one outside of $\Om$, the numerical  solution
for times $t \leq 2R$  (which will  always be the case in our simulations)  is free from
periodization artifacts in the domain $\Om$ and on the measurement  surface.

\begin{remark}[Numerical complexity of $k$-space based iterative algorithms]
Using the FFT algorithm any time step in the $k$-space method 
(summarized in   Algorithm~\ref{alg:kpace}) can be implemented
using  $\mathcal O(N^2 \log N)$ floating point operations (FLOPS).  Performing $M \sim N$
time steps therefore yields to $\mathcal O(N^3 \log N)$ algorithms for implementing the forward
operator $\Lnum_{N,M}$ and its adjoint  $\Lnum_{N,M}^*$.  Consequently, performing one iterative step (for example using the CG or the Landweber iteration) is almost as fast as applying the filtered backprojection type algorithm   (which requires $\mathcal O(N^3)$
 FLOPS) for evaluating the
adjoint or the inverse of $\Lo$.
In three spatial dimensions the complexity of the $k$-space method scales to $\mathcal O(N^4 \log N)$.
In this case one iterative step is already faster than filtered backprojection type algorithms
(which in this case requires $\mathcal O(N^5)$ FLOPS).
As we will see in the  numerical results presented below, around 10 iterations with the CG method already gives very accurate reconstruction  results. This shows that our iterative algorithms are a
good option for PAT image reconstruction even in situations,
where an explicit filtered backprojection type formula is available.
\end{remark}

\begin{figure}[tbh!]\centering
\includegraphics[width =0.3\textwidth]{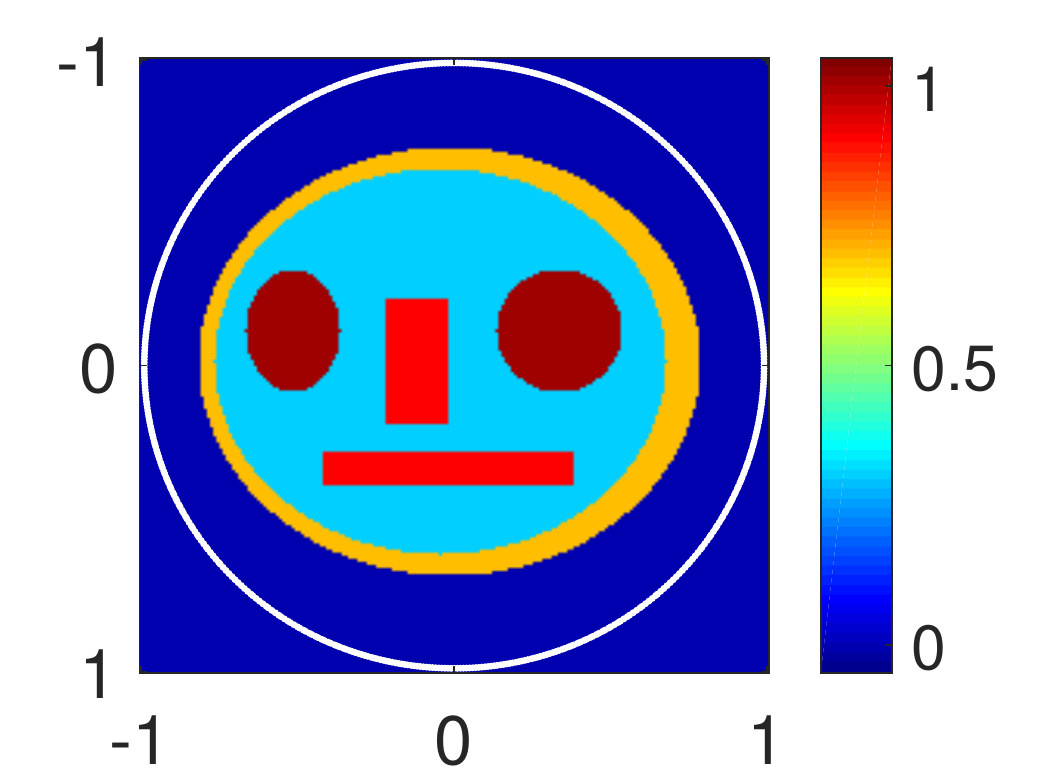}
\includegraphics[width =0.3\textwidth]{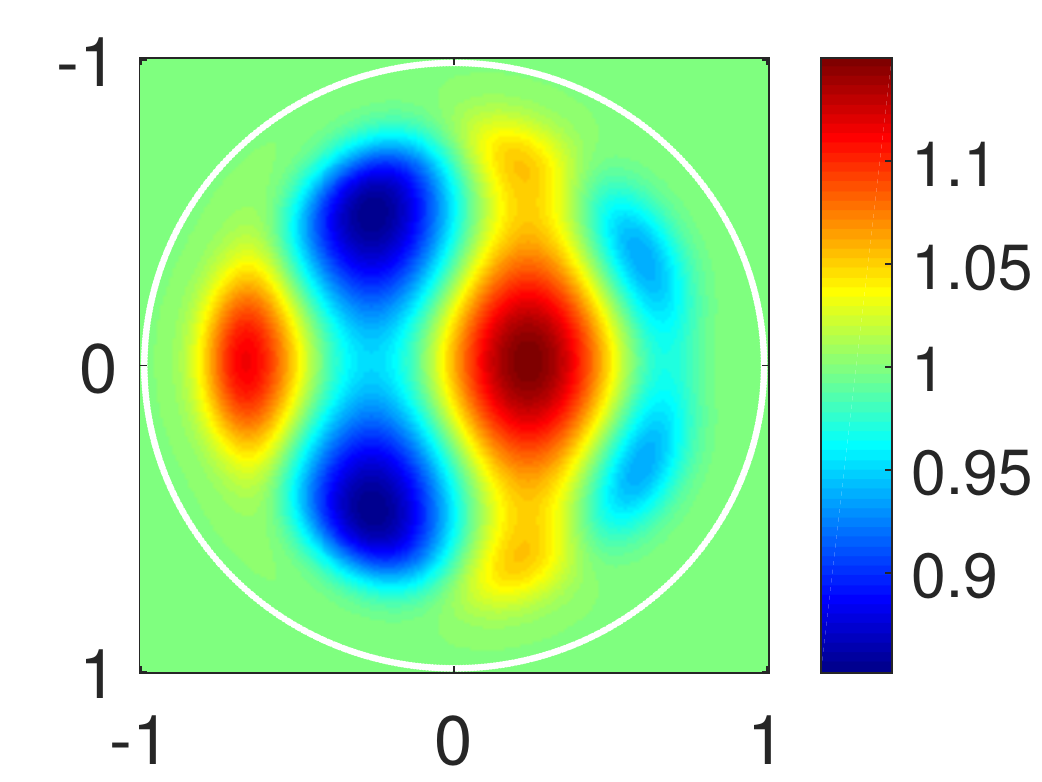}
\includegraphics[width =0.3\textwidth]{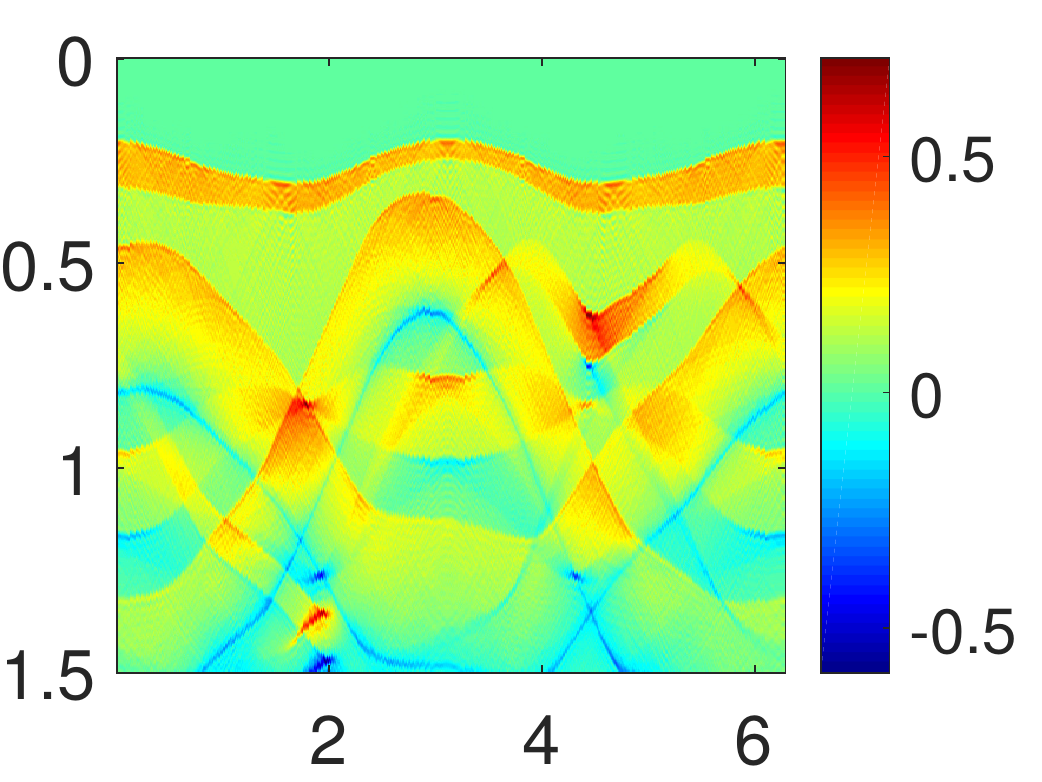}\\
\includegraphics[width =0.22\textwidth]{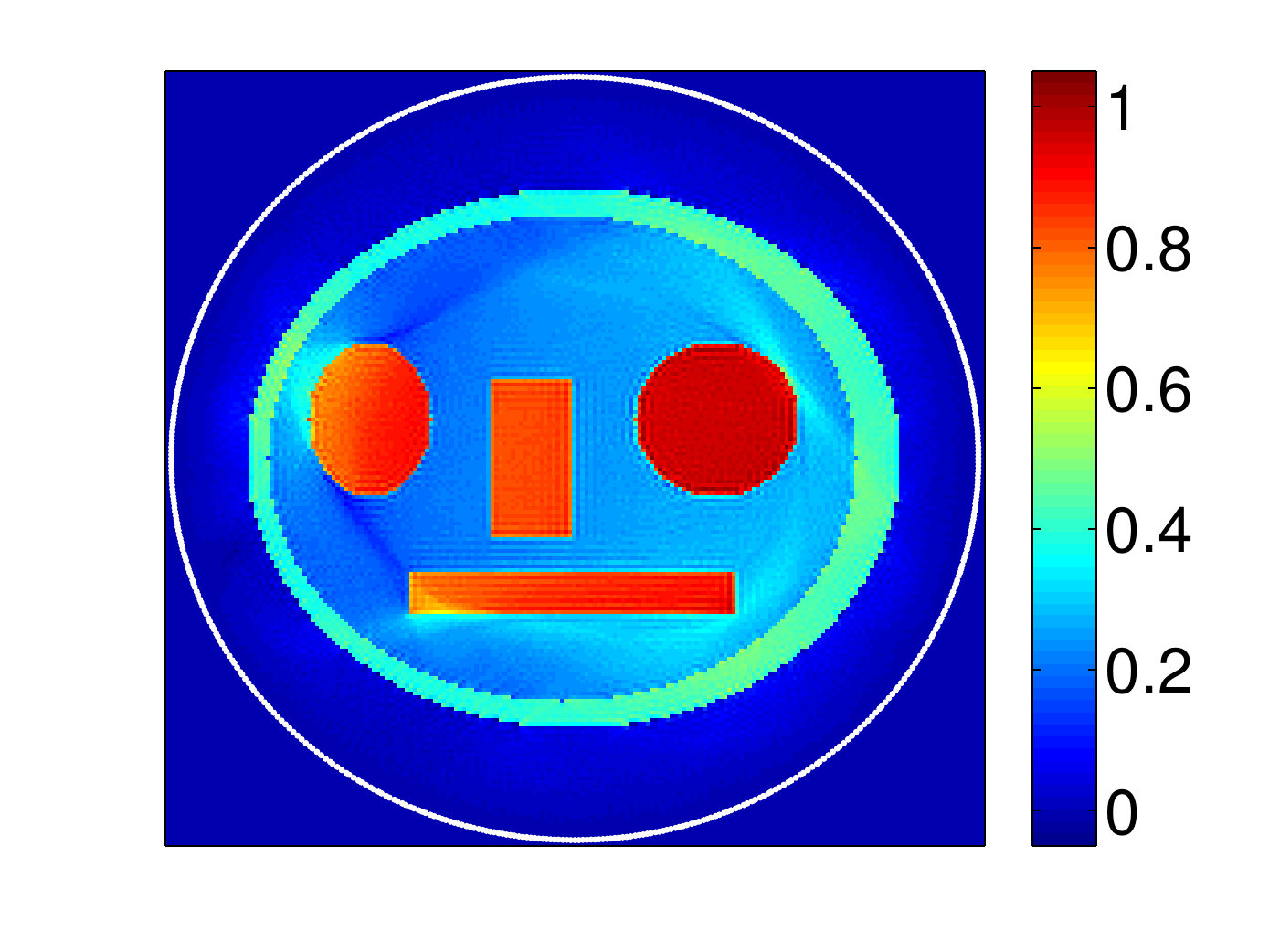}
\includegraphics[width =0.22\textwidth]{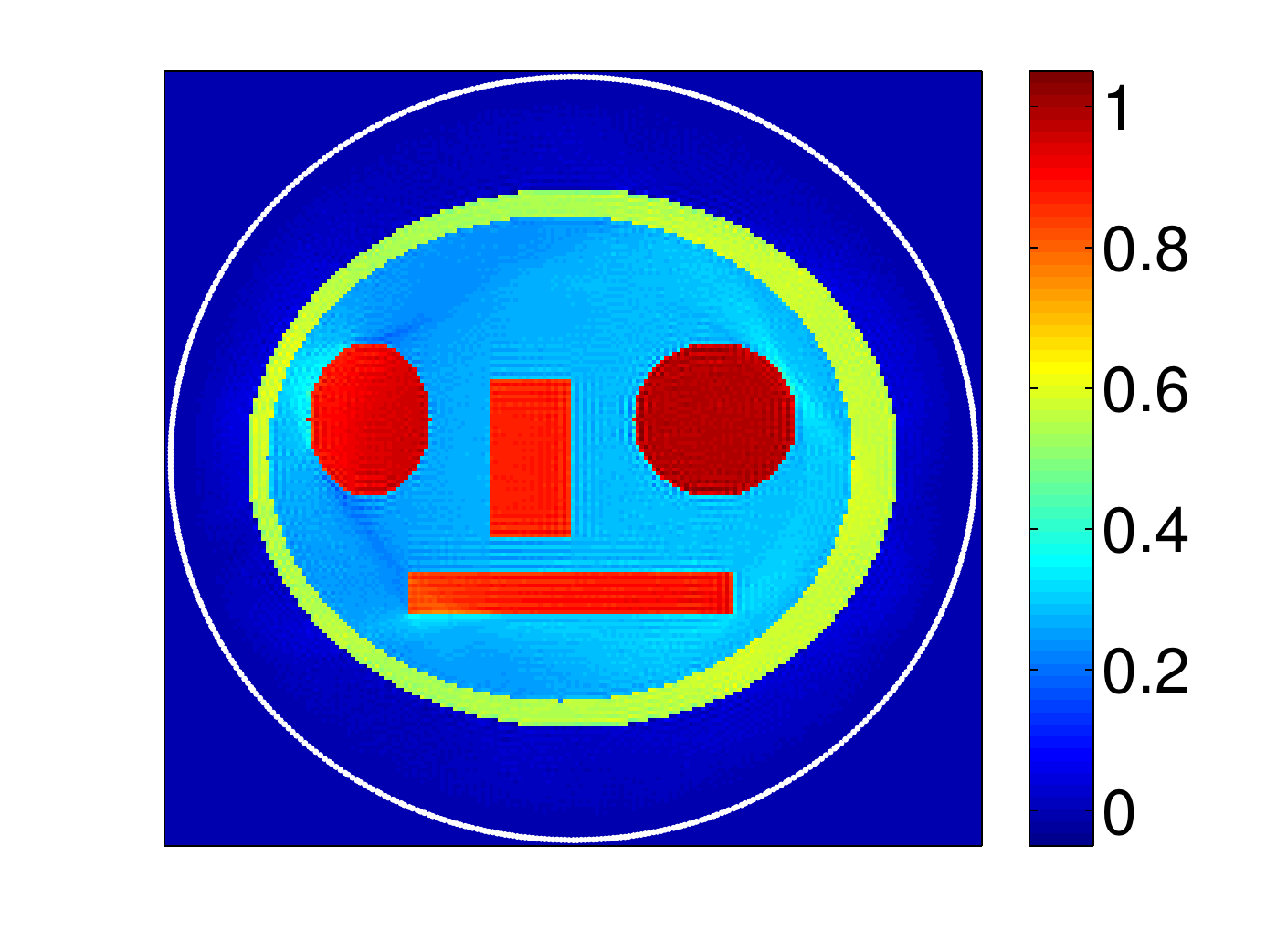}
\includegraphics[width =0.22\textwidth]{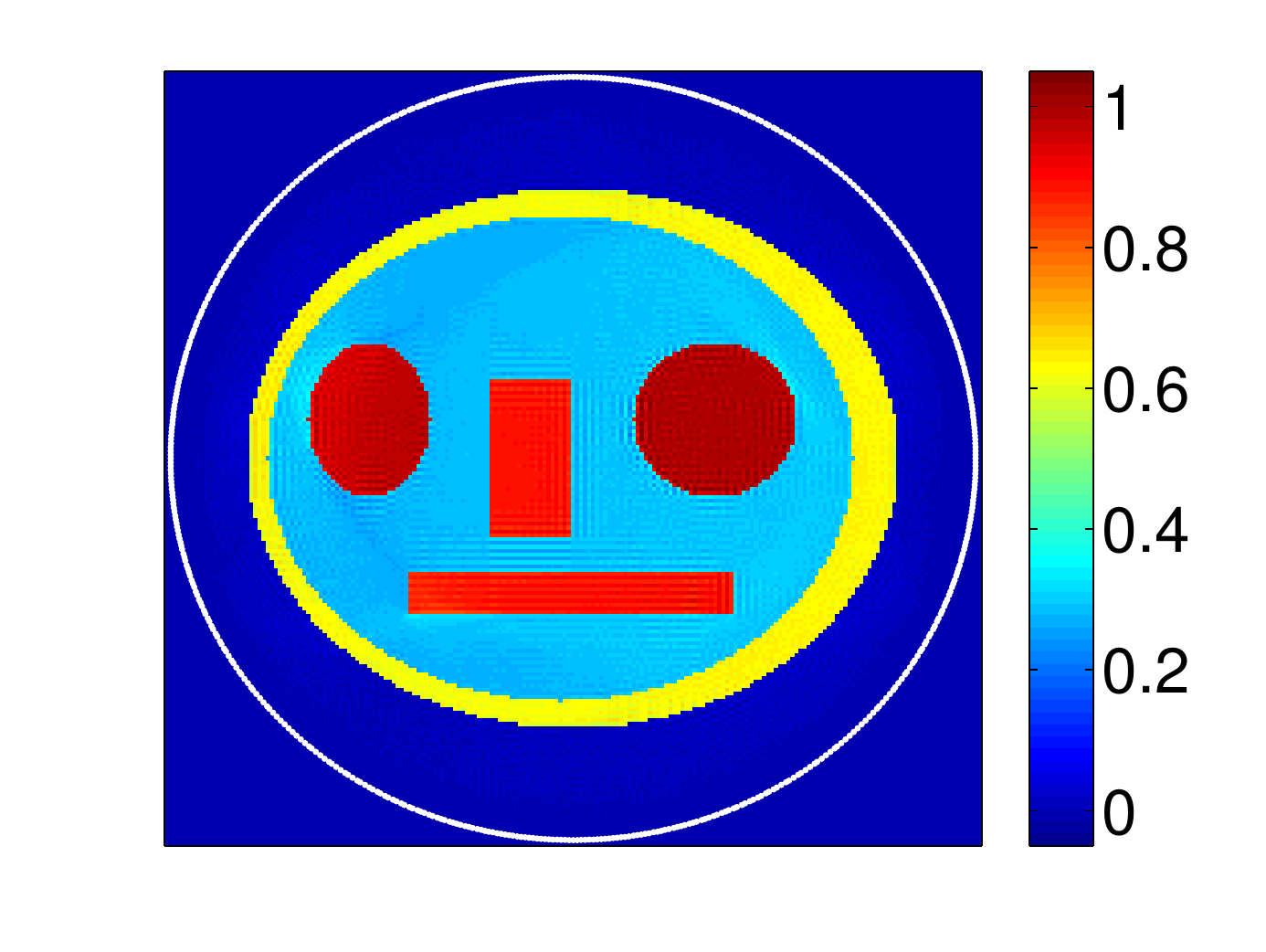} 
\includegraphics[width =0.22\textwidth]{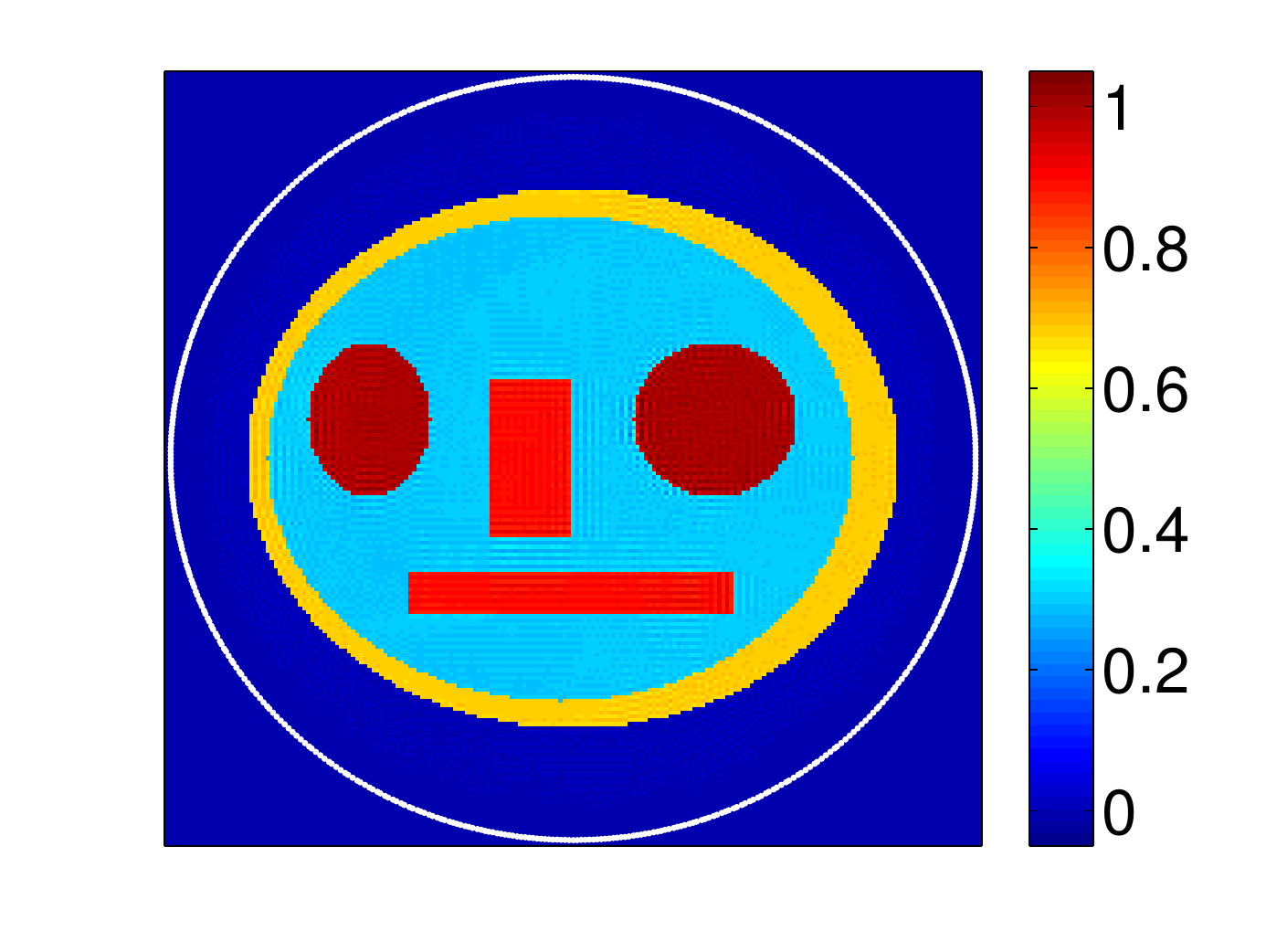} 
\\
\includegraphics[width =0.22\textwidth]{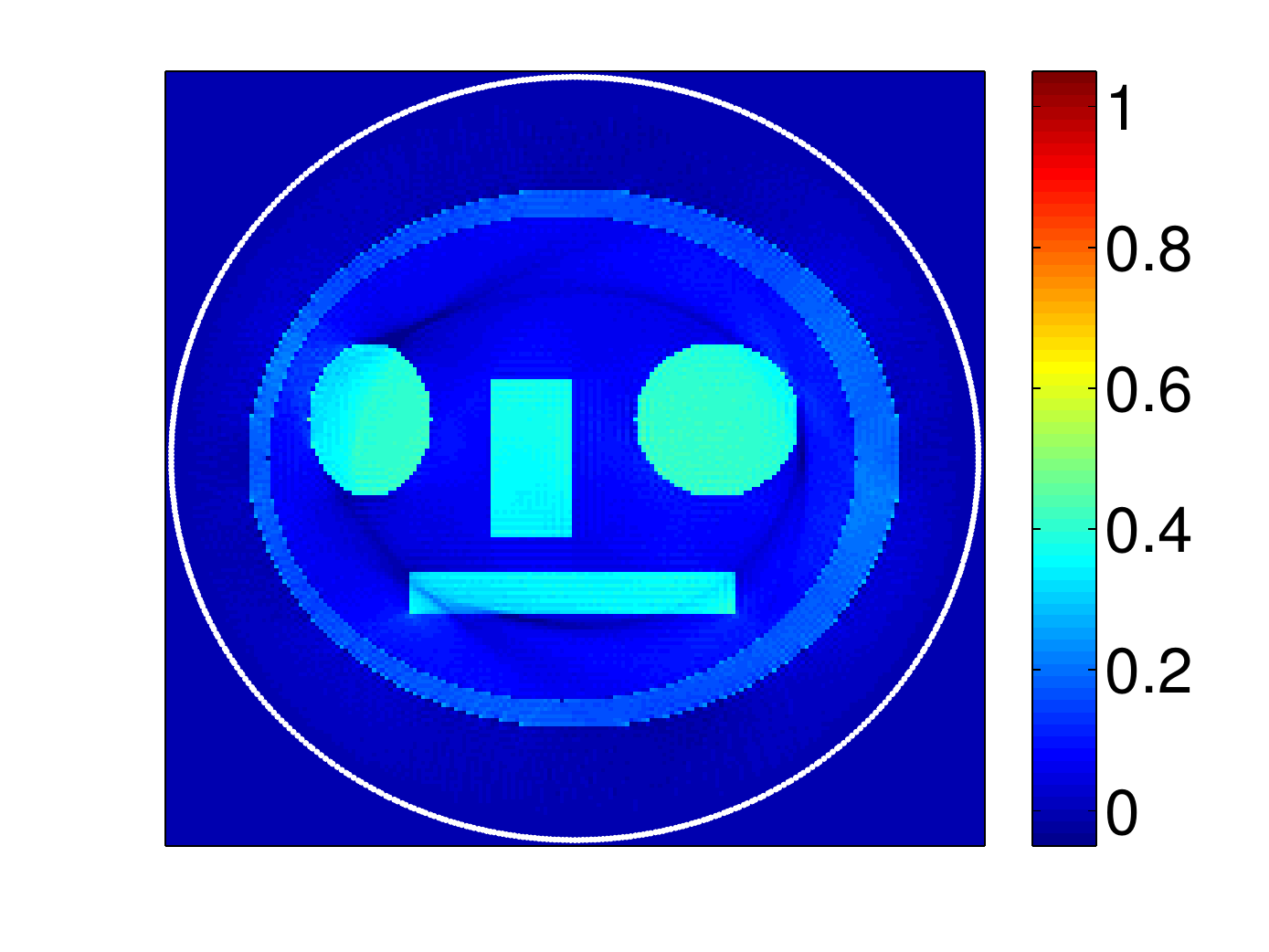}
\includegraphics[width =0.22\textwidth]{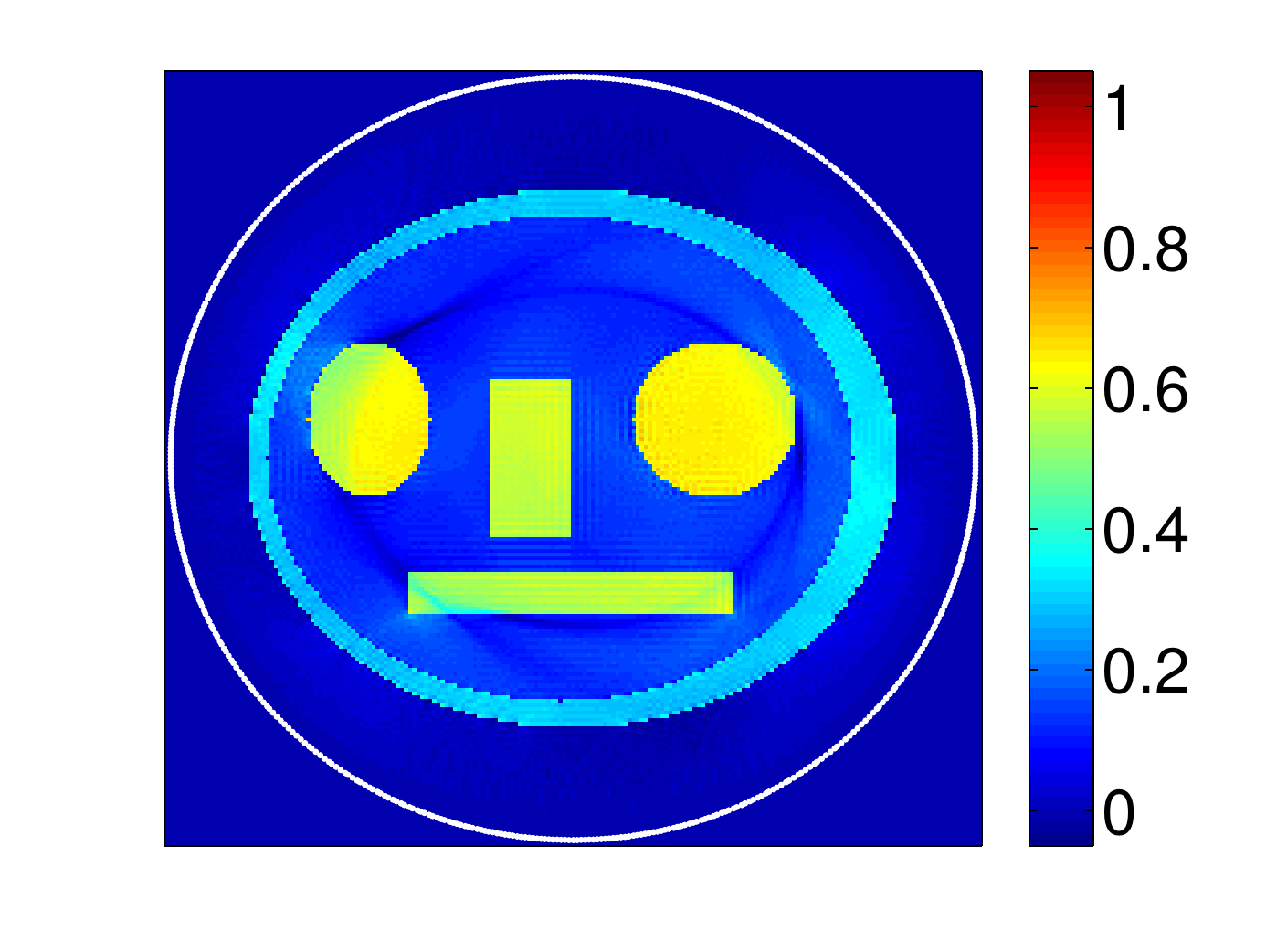}
\includegraphics[width =0.22\textwidth]{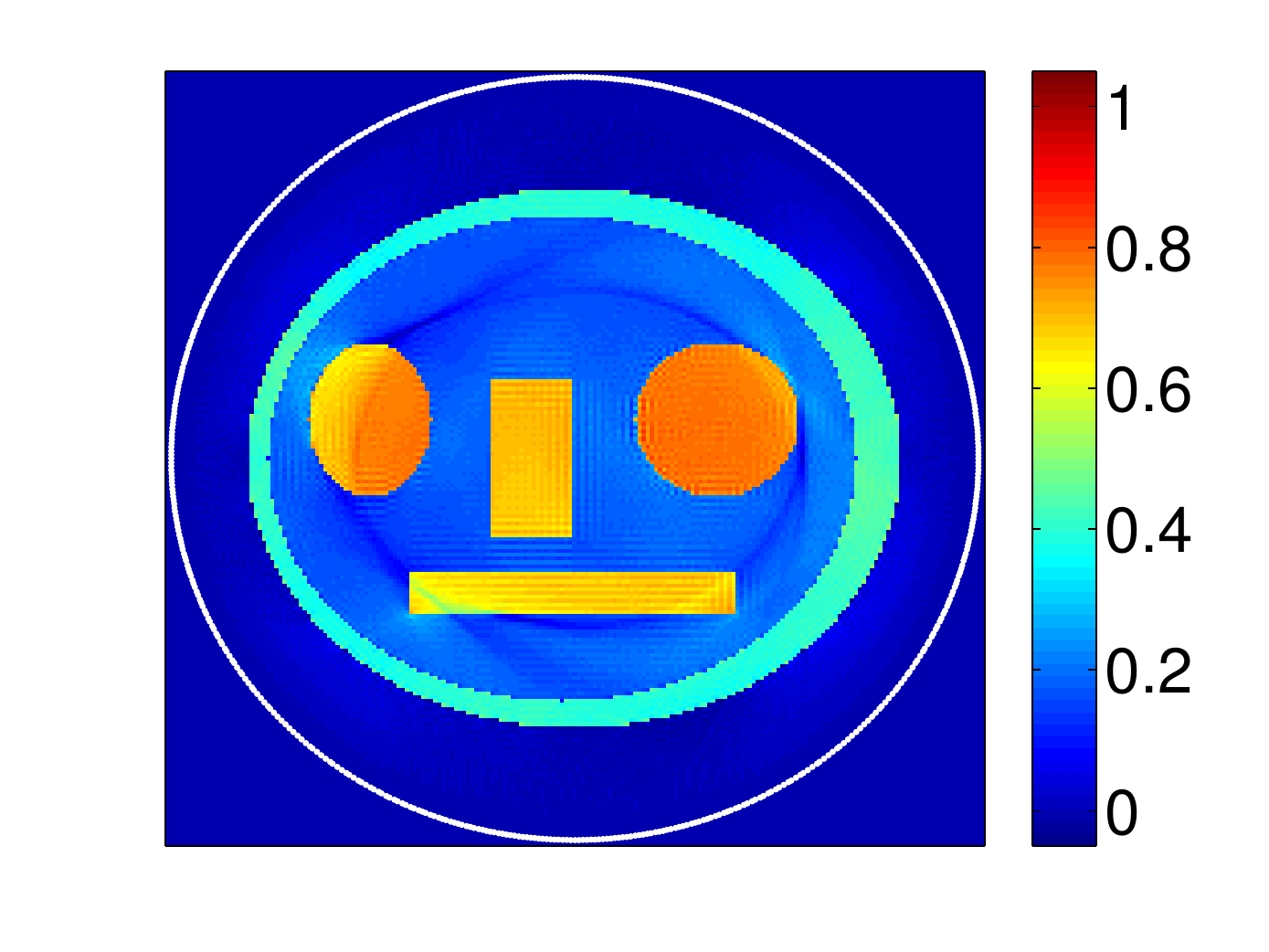} 
\includegraphics[width =0.22\textwidth]{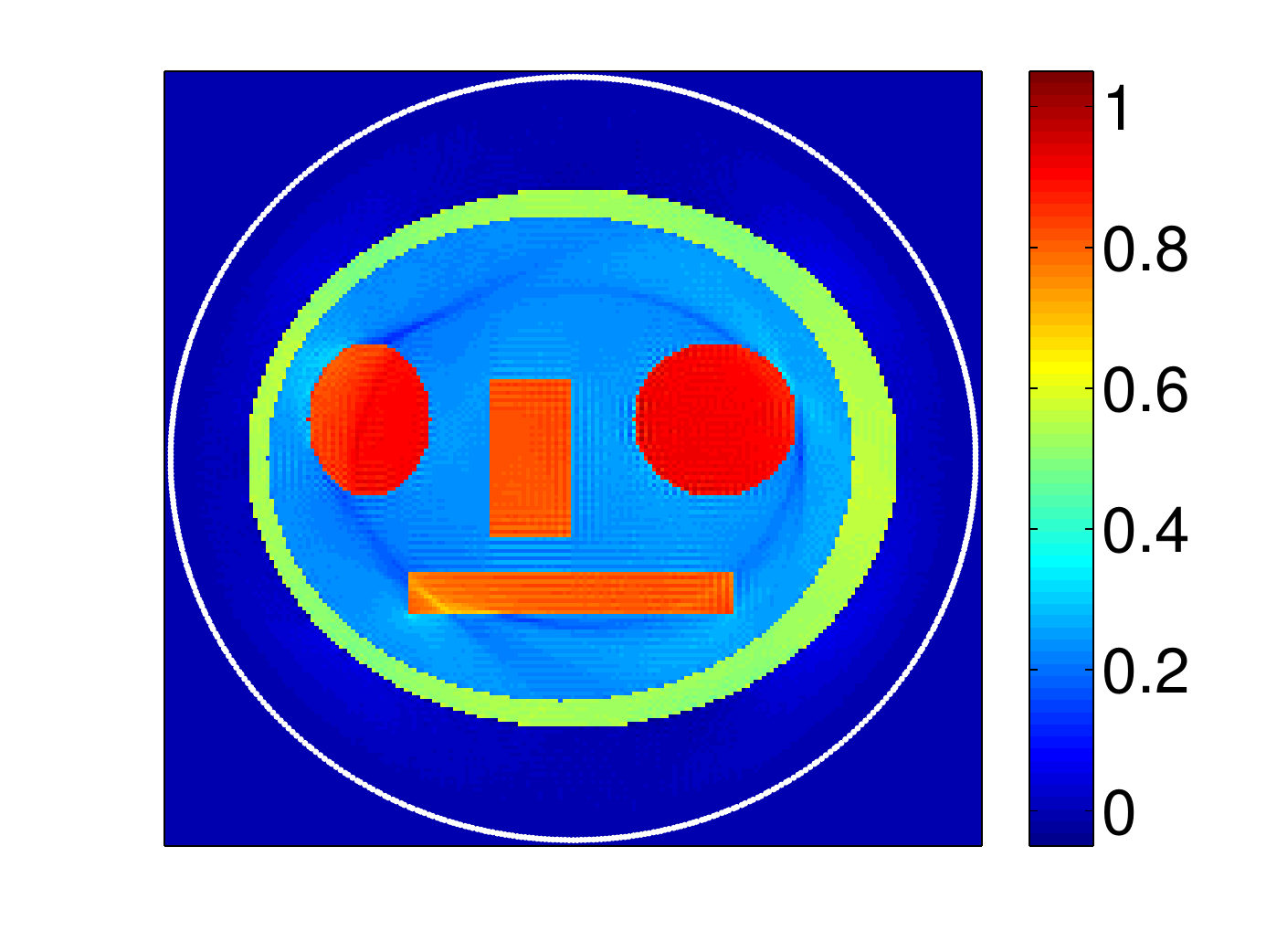} 
\\
\includegraphics[width =0.22\textwidth]{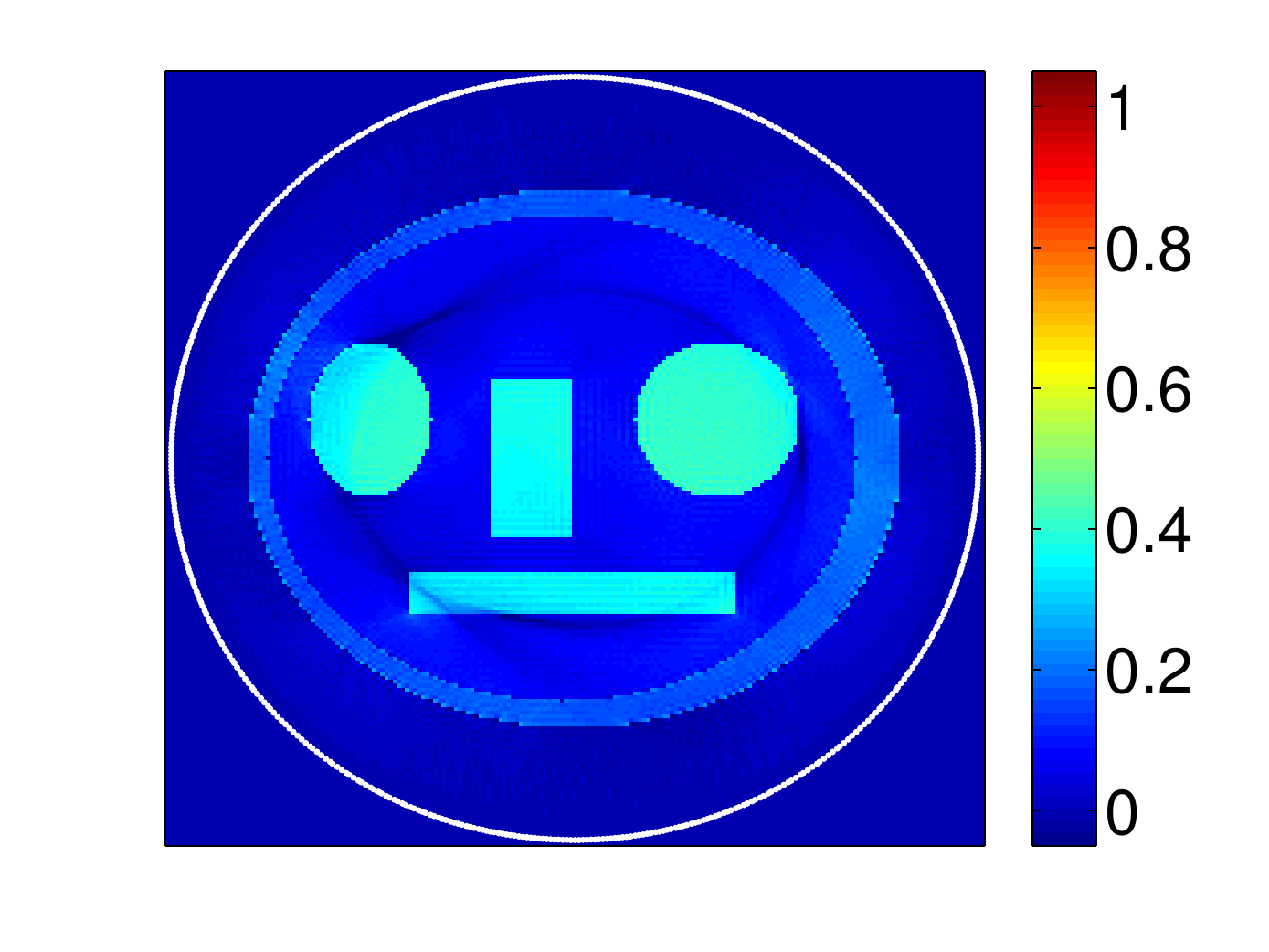}
\includegraphics[width =0.22\textwidth]{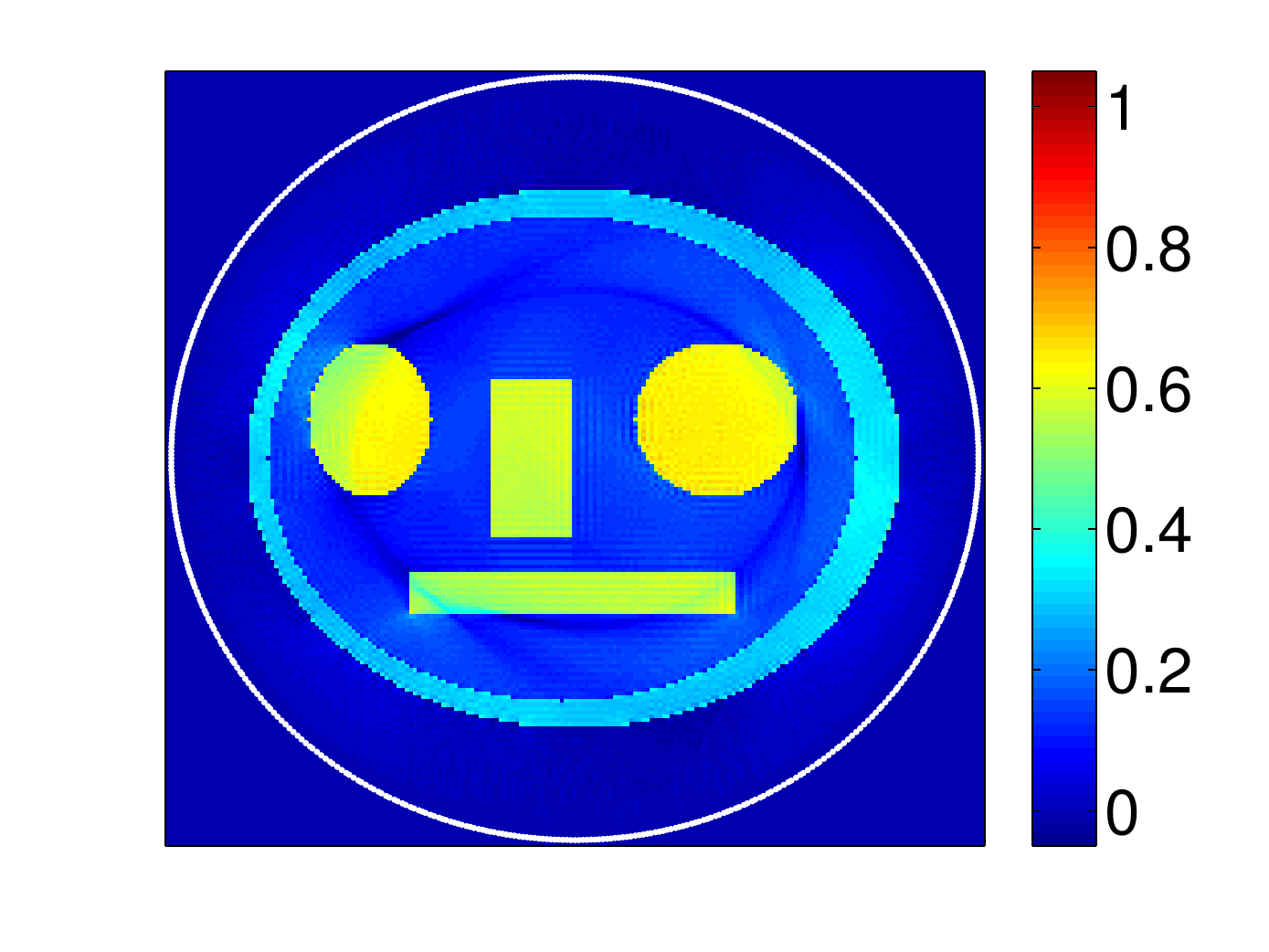}
\includegraphics[width =0.22\textwidth]{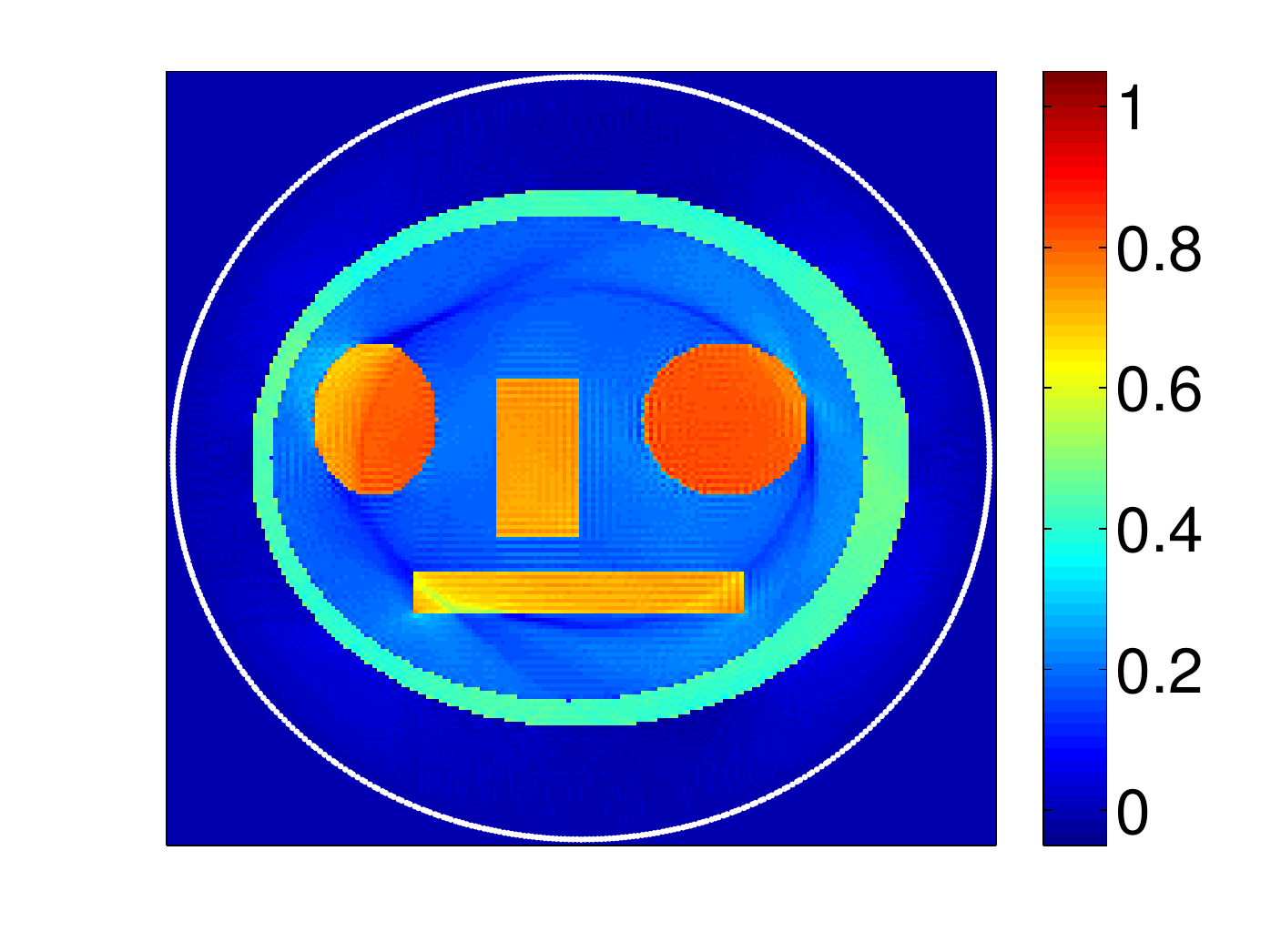} 
\includegraphics[width =0.22\textwidth]{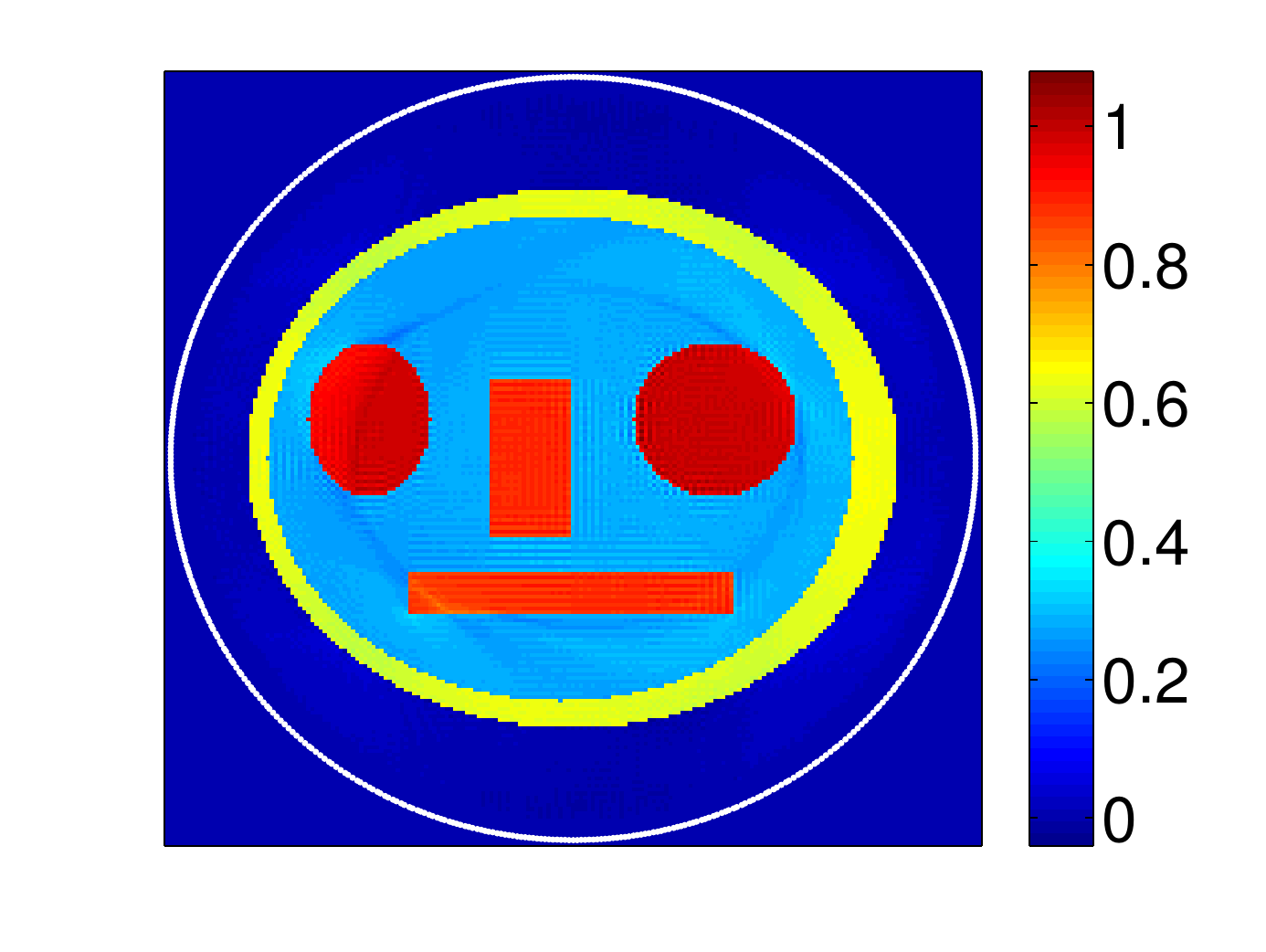} 
\\
\includegraphics[width =0.22\textwidth]{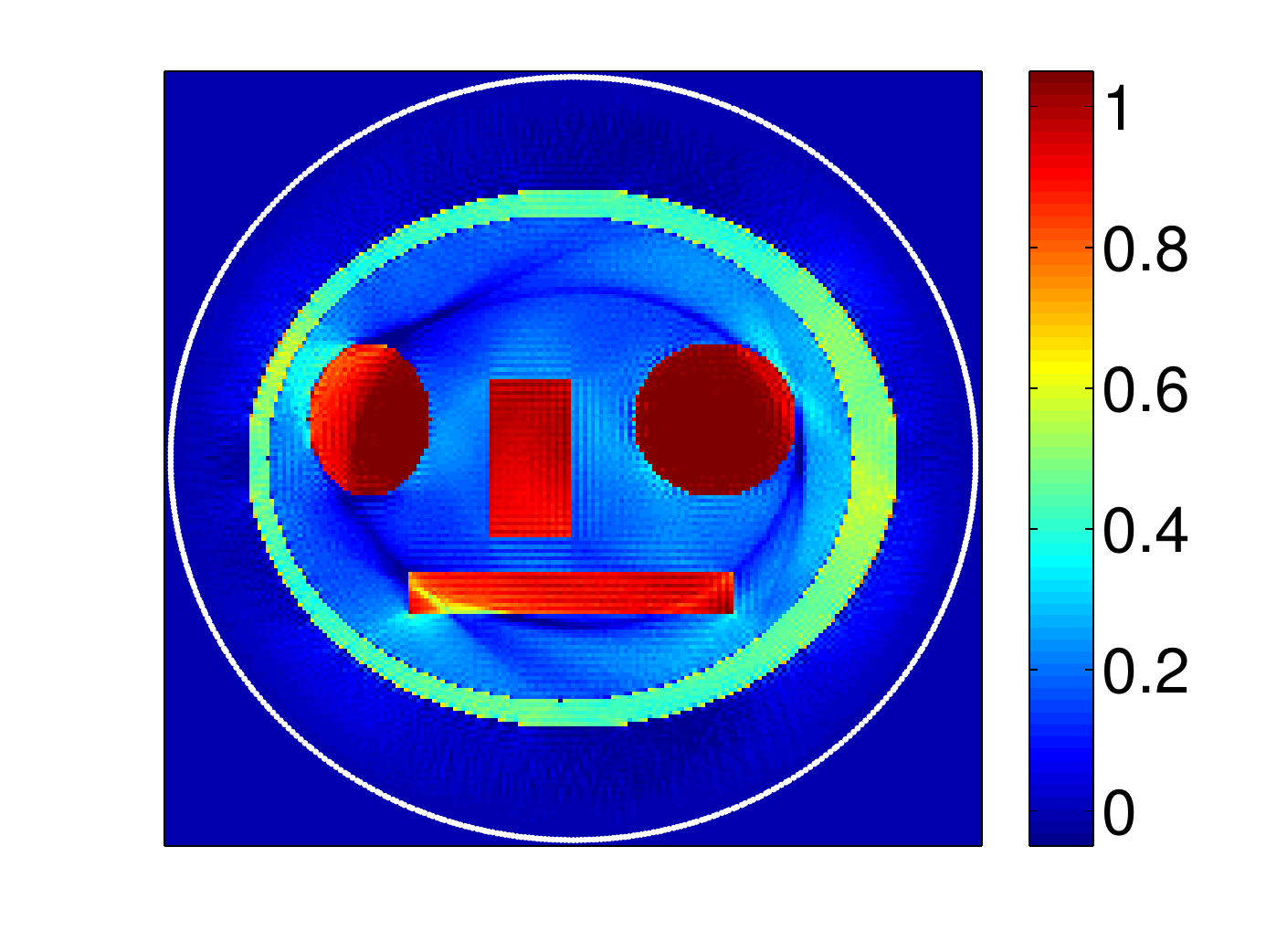}
\includegraphics[width =0.22\textwidth]{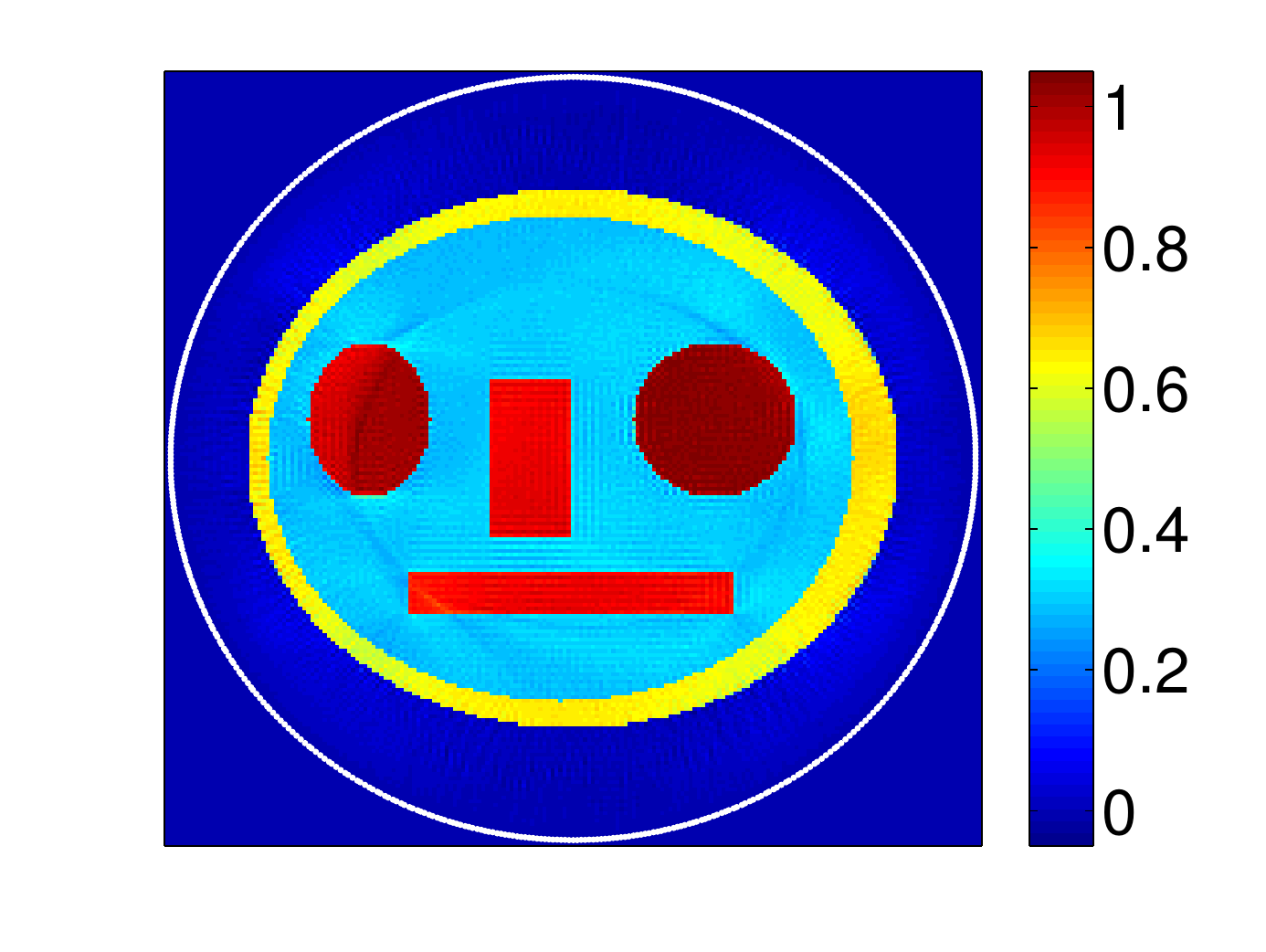}
\includegraphics[width =0.22\textwidth]{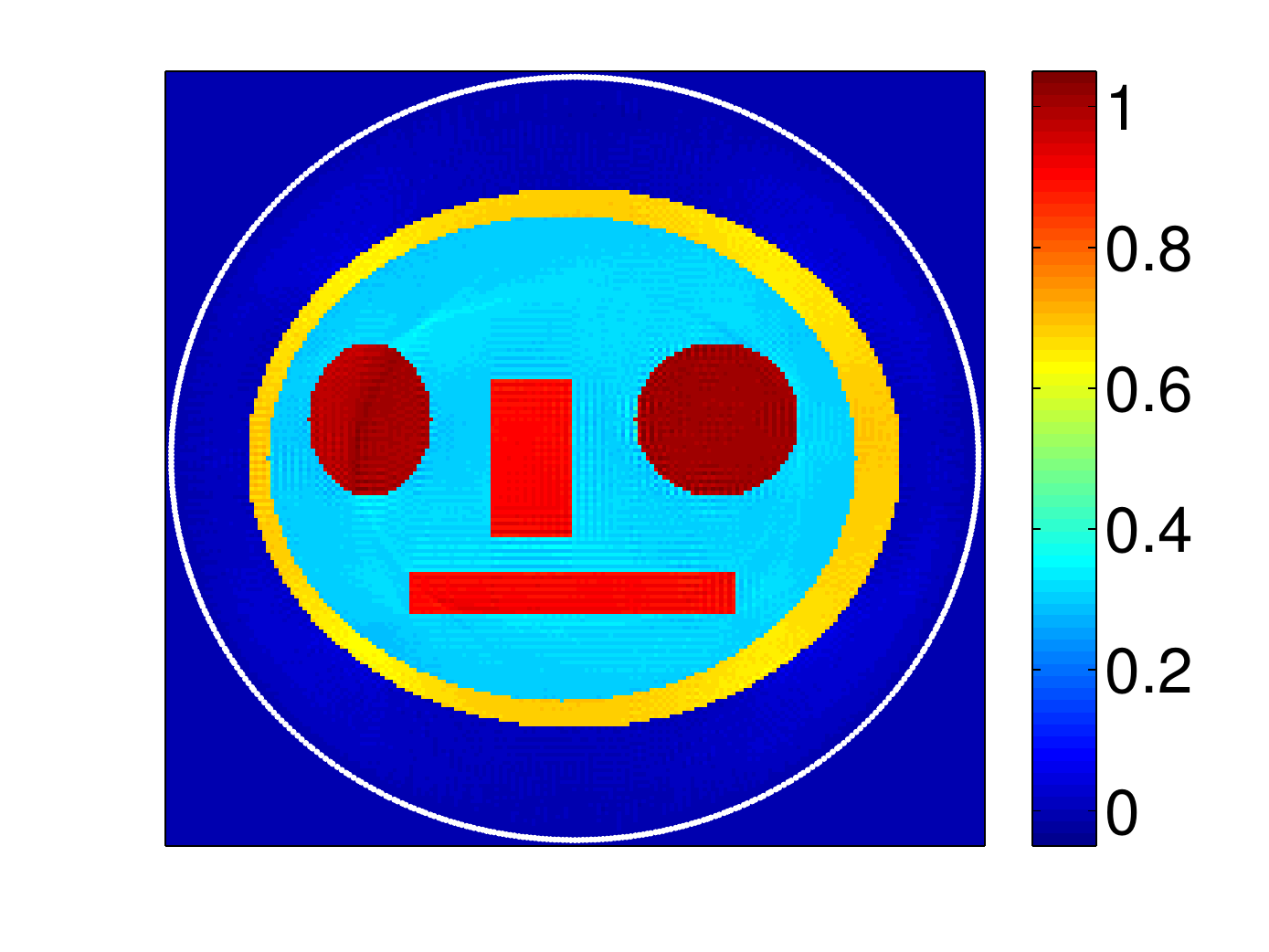} 
\includegraphics[width =0.22\textwidth]{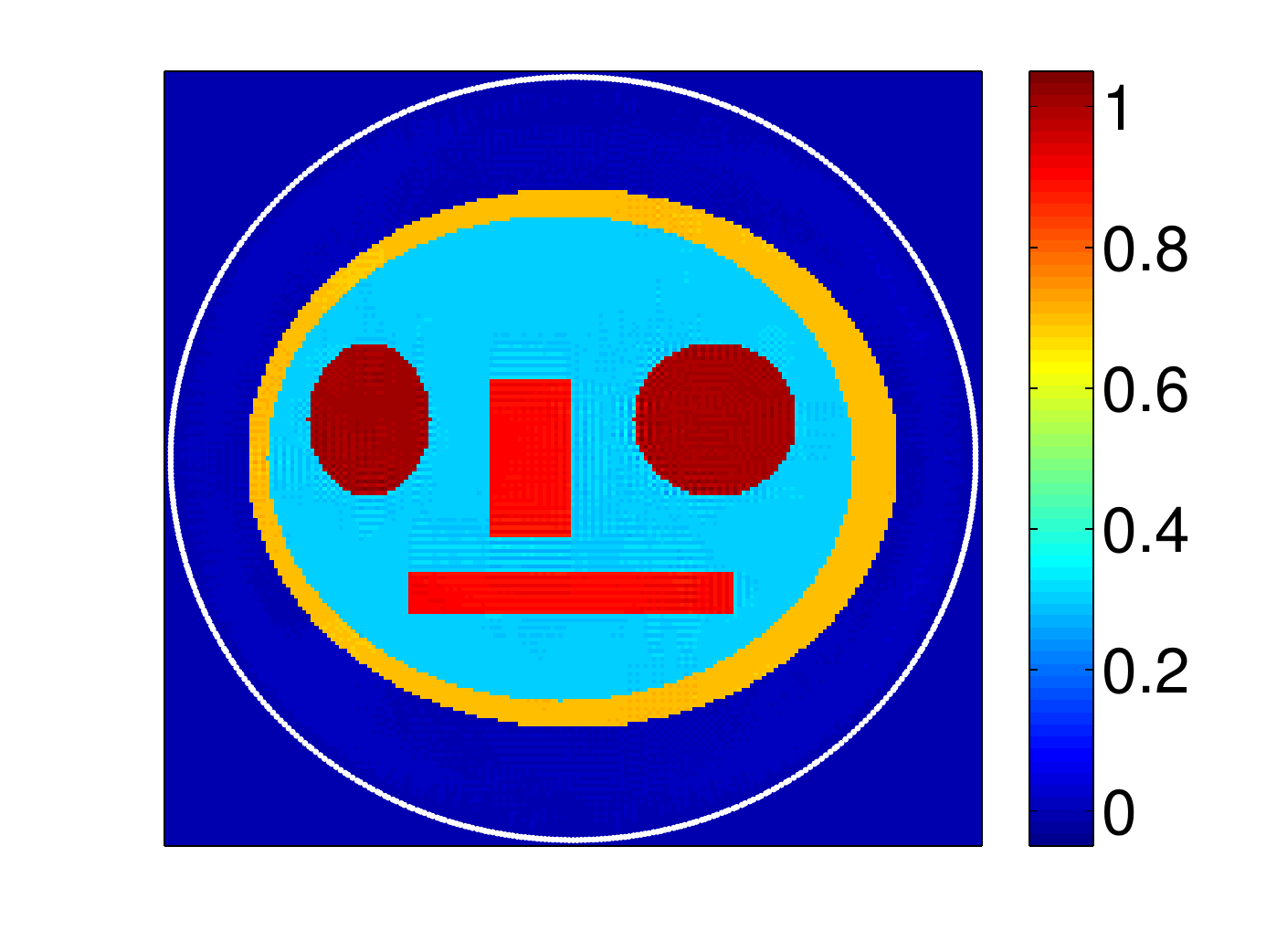} 
\\
\includegraphics[width =0.22\textwidth]{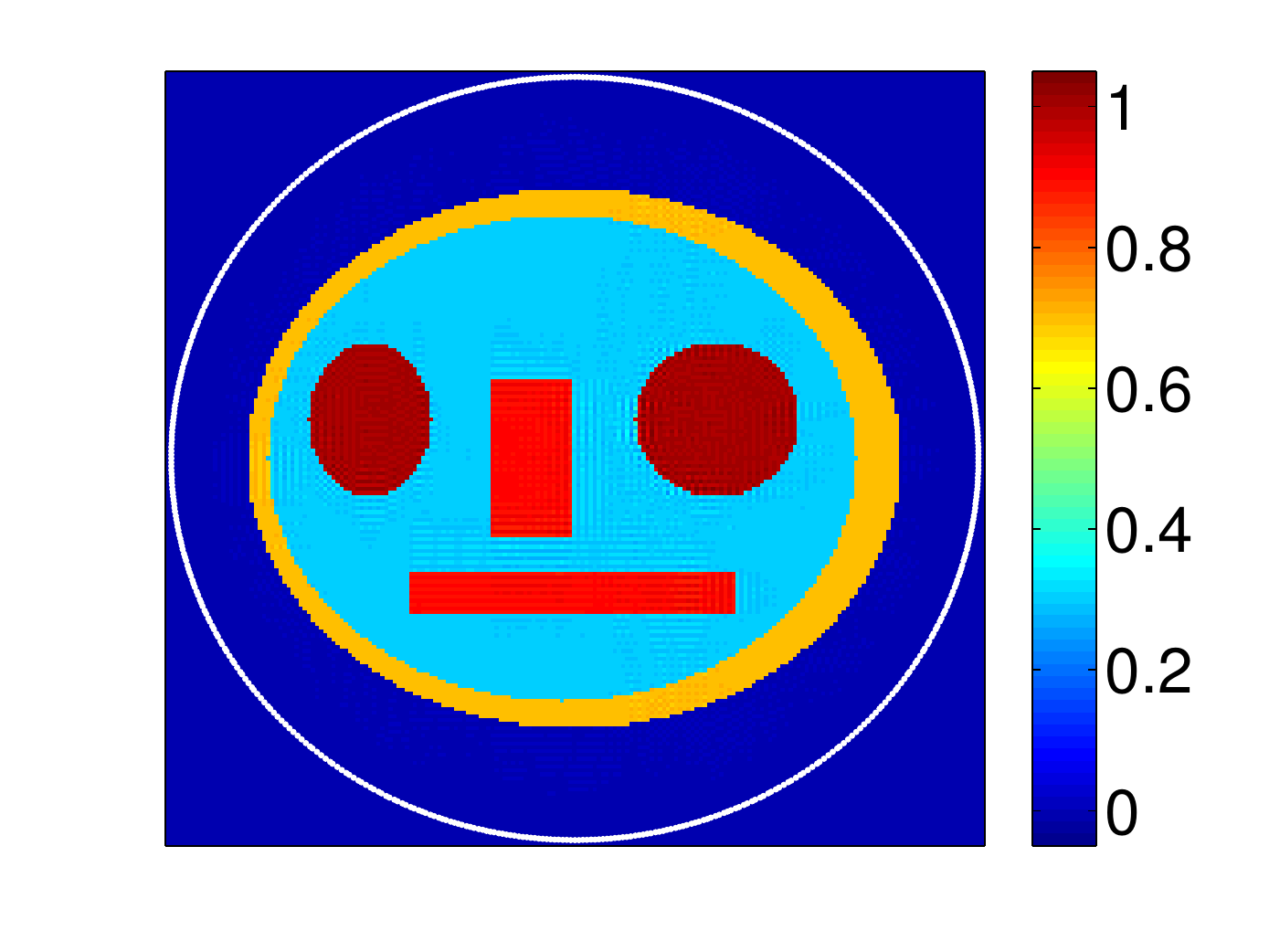}
\includegraphics[width =0.22\textwidth]{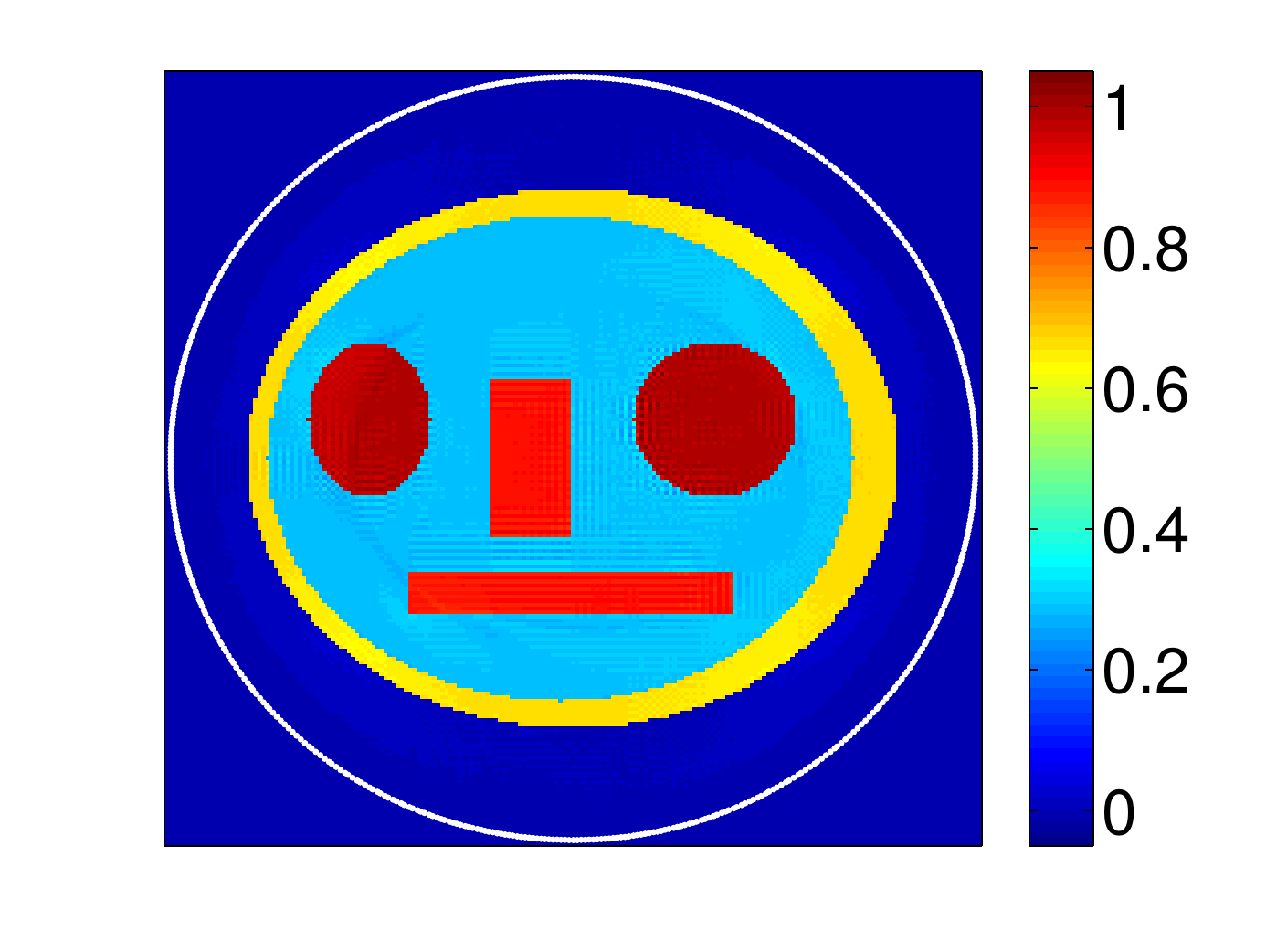}
\includegraphics[width =0.22\textwidth]{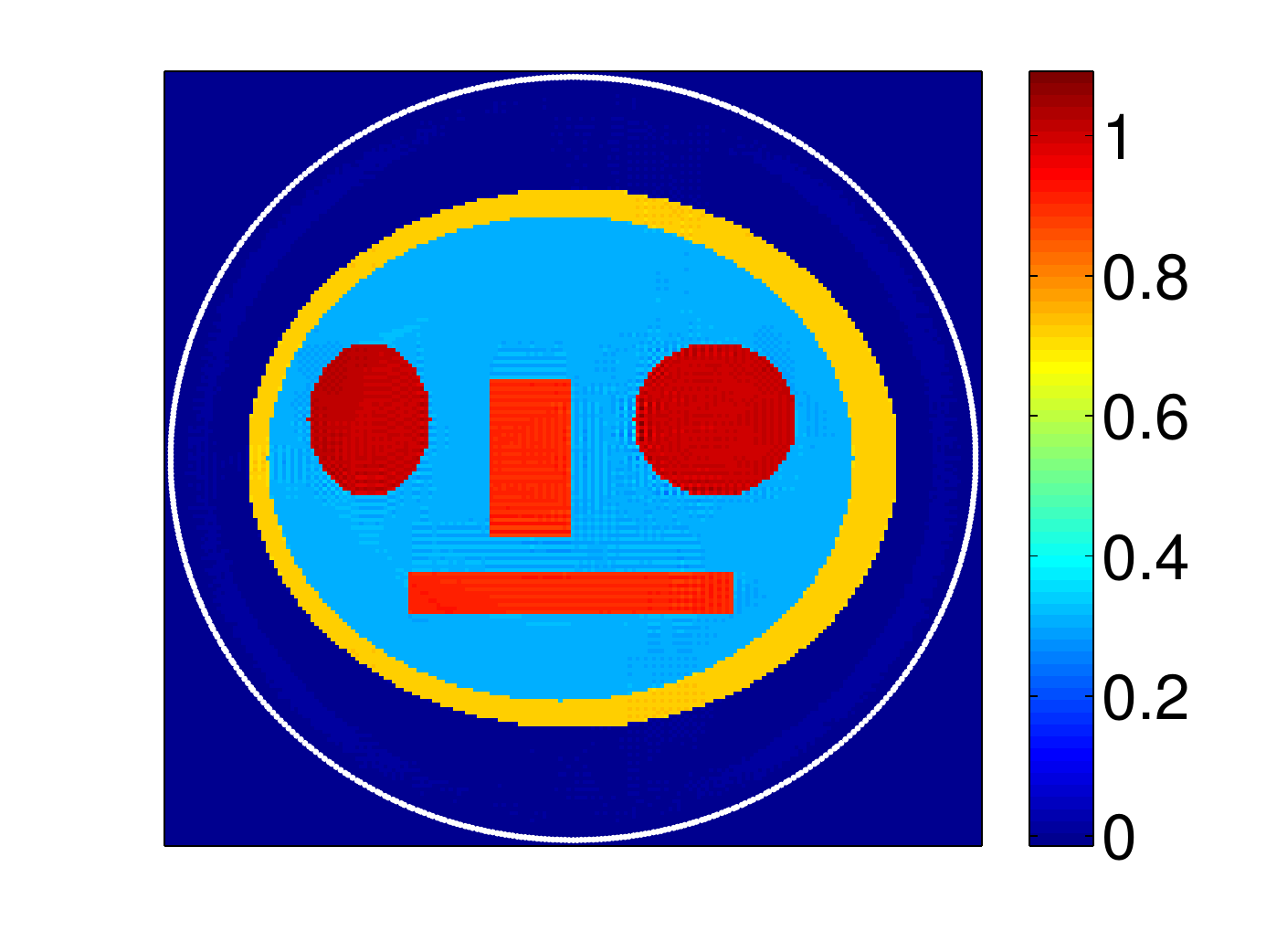} 
\includegraphics[width =0.22\textwidth]{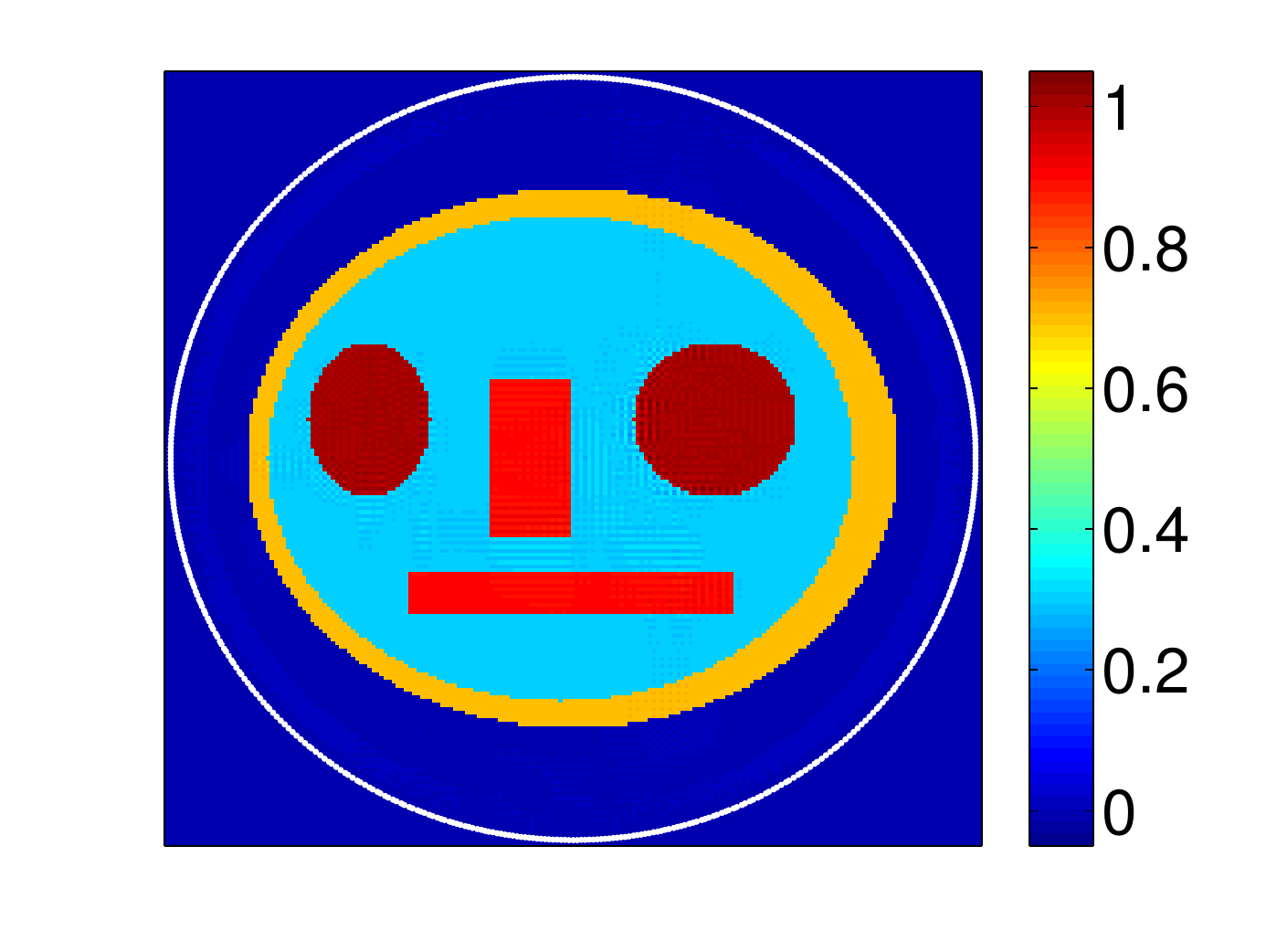} 
\caption{{\scshape Test case~\ref{t1}, Reconstructions from exact data.}
Row 1: Initial pressure data $f$ (left), non-trapping sound speed $c$ (middle), and  pressure data $\Lo f$ (right). The white dots indicate 
the measurement curve.
Row 2: Iterative time reversal (after 1, 2, 3 and 5 iterations).
Row 3: Landweber's  method (after 1, 2, 3 and 5 iterations).
Row 4: Nesterov's method (after 1, 2, 3 and 5 iterations).
Row 5: CG method  (after 1, 2, 3 and 5 iterations).
Row 6: Results after 10 iterations using iterative time reversal,  
Landweber's method, Nesterov's method, CG method (from left to right).
\label{fig:test1}}
\end{figure}

\subsection{Test case \ref{t1}:  Complete data}
\label{sec:test1}

We  consider the  nontrapping sound speed (taken from
 \cite{QiaSteUhlZha11})
\begin{equation}\label{eq:ss-nontrapping}
 	c(x) = 1 +  w(x) \kl{ 0.1 \cos (2\pi x_1) +  0.05 \sin (2\pi x_2)} \,,
\end{equation}
where $w \colon \R^2 \to [0,1] $ is a smooth function
that vanishes outside  $B_1(0)$ and  is equal to one on $B_{1/2}(0)$.
The  sound speed $c$, the phantom $f$ and the corresponding
full data $\Lo f$ are illustrated in  the top row in Figure~\ref{fig:test1}.
For the results presented in this section we use $R=1$ and $N=200$,  which yields a
spatial step size of $h_x = 2R/N = 1/100$.  We further use a final time $T = 1.5$ and take
$M=800$  for the temporal discretization.

\begin{figure}[tbh!]\centering
\includegraphics[width =0.48\textwidth]{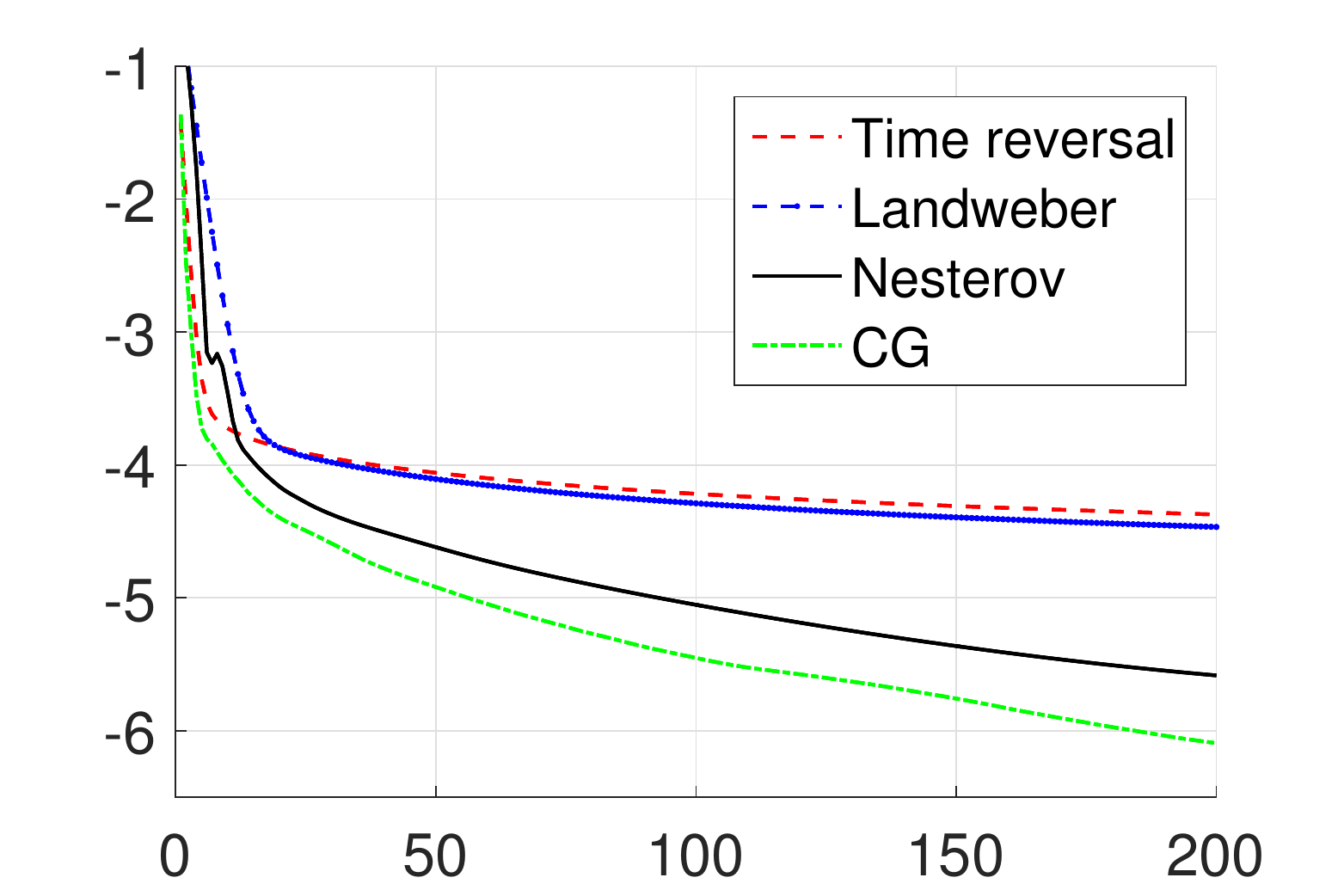}
\includegraphics[width =0.48\textwidth]{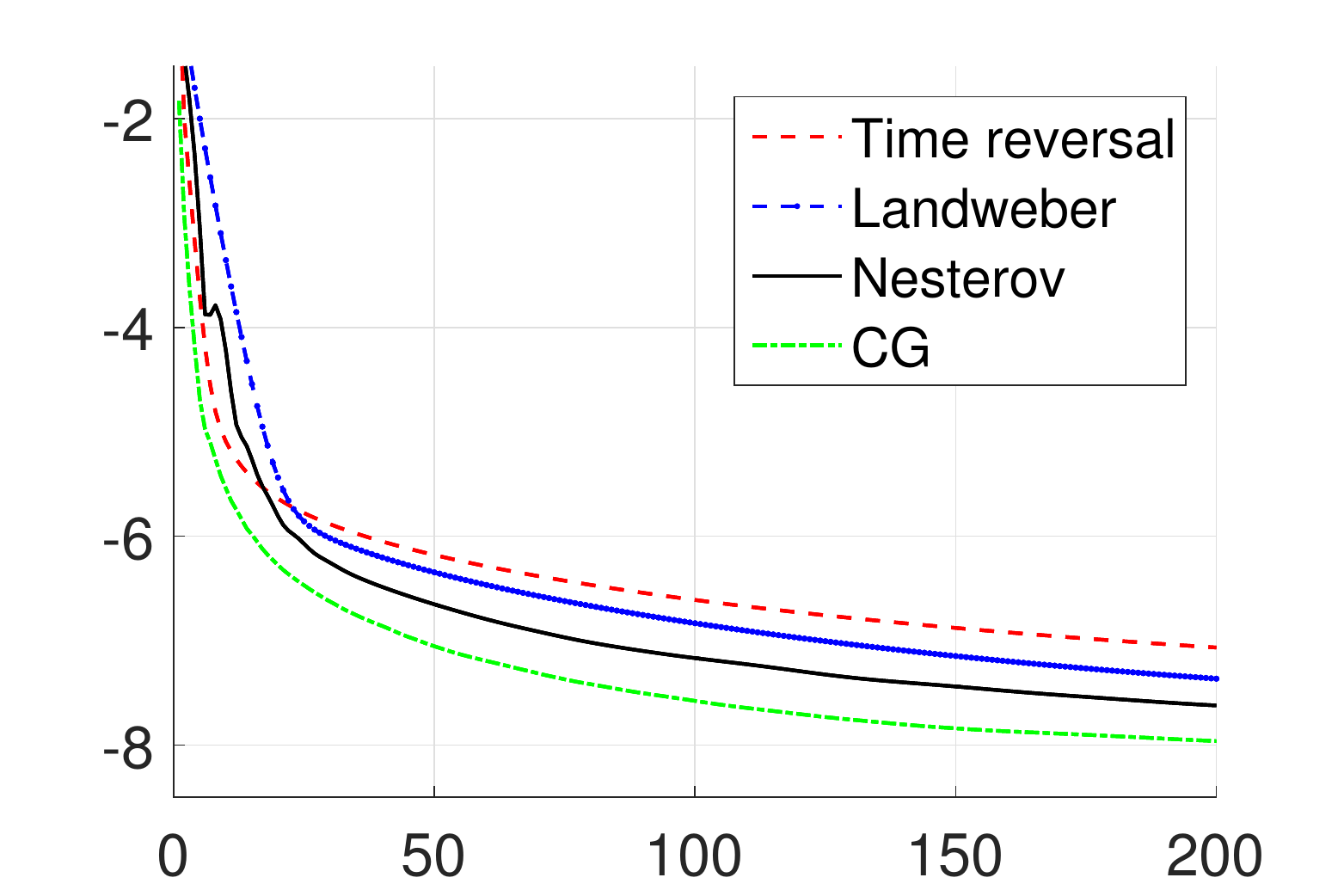}
\caption{{\scshape Test case \ref{t1}, convergence behavior for exact data}
Left:  Logarithm of squared reconstruction error $\norm{\fnum_n - \fnum}^2_2$
in dependence of the iteration  number.  Right:  Logarithm of residual $\norm{\Lnum_{N,M}\fnum_n - \gnum}^2_2$
in dependence of the iteration  number.\label{fig:test1error}}
\end{figure}

We performed iterative reconstructions  using  the following methods:
\begin{enumerate}[label=(\alph*)]
\item Iterative time reversal method
\item Landweber's method
\item Nesterov's method
\item CG method.
\end{enumerate}
Figure~\ref{fig:test1}  shows reconstruction  results using these methods after 1, 2, 3, 5, 
and 10 iterations. One notices that all iterations converge quite rapidly  to the phantom to be recovered. In the initial iterations the time reversal and the CG method produce the best results.
After 10 iterations all reconstructions look very similar to the initial phantom.      
To investigate the convergence behavior further, in  Figure \ref{fig:test1error}
we plot the logarithm of the squared discrete $L^2$-reconstruction error and squared residual
\begin{align*}
     \norm{\fnum_n -\fnum}_2^2
     &\coloneqq
     \sum_{i} \abs{ \fnum_n[i] - \fnum[i] }^2  h_x^2
     \simeq \norm{f-f_n}_{L^2}^2 \,,
     \\
     \norm{\Lnum_{N,M} \fnum_n -\gnum}^2
     &\coloneqq \sum_{b,j} \abs{ \Lnum_{N,M} \fnum_n[b,j]- \gnum[b,j] }^2  h_x h_t  \simeq  \norm{\Lo f_n -g}_{L^2}^2 \,,
\end{align*}
respectively.
One concludes from Figure~\ref{fig:test1error}, that all iterative schemes converge 
quite rapidly. In particular the CG method is the fastest.

In order to further investigate the behavior of the algorithms we repeated the
computations  with inexact data.  To that end, we generated the data on a different grid, where we use $N=350$ and $M=1300$ (recall that the iterative algorithm uses $N=200$ and $M=800$). Further, we added Gaussian white noise to the data with a standard deviation equal to $5\%$  of the $L^2$-norm of  $\Lnum_{N,M} \fnum$. The total $L^2$-error in the data is $0.049$  and the $L^2$-norm of the exact data is $\norm{\gnum}_2 = 0.44$.

Figure~\ref{fig:test1noisy} shows the reconstruction results from inexact data
using iterative time reversal, Landweber's, Nesterov's, and the CG methods.
 The errors and the residuals again decrease quite rapidly in the first iterative  steps.
However after about $10$  iterations the error as well as the residuals do not further decrease.   Consequently, the iterations can be stopped at a certain iteration index $n_\star$.
This is due to the noise in the data which causes the data to be outside the range of
$\Lnum_{N,M}$.  However, these results also reveal that we are in a stable situation,
because the error does not significantly increase after reaching the stopping  index $n_\star$.

\begin{figure}[tbh!]\centering
\includegraphics[width =0.3\textwidth]{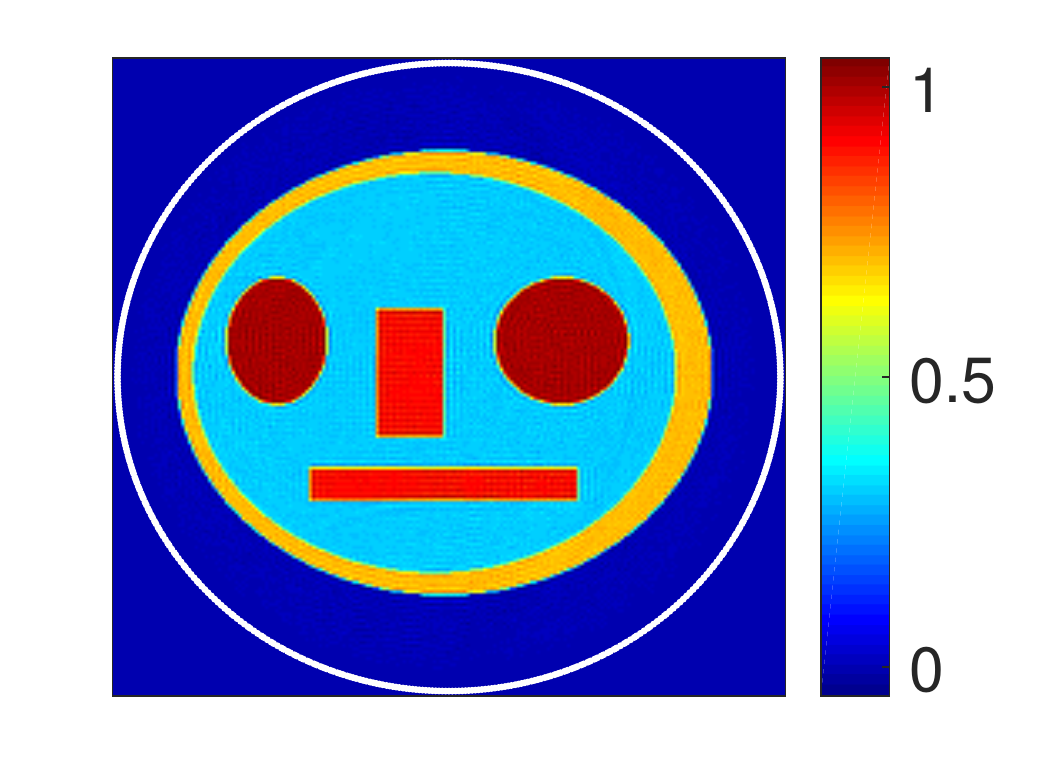}
\includegraphics[width =0.3\textwidth]{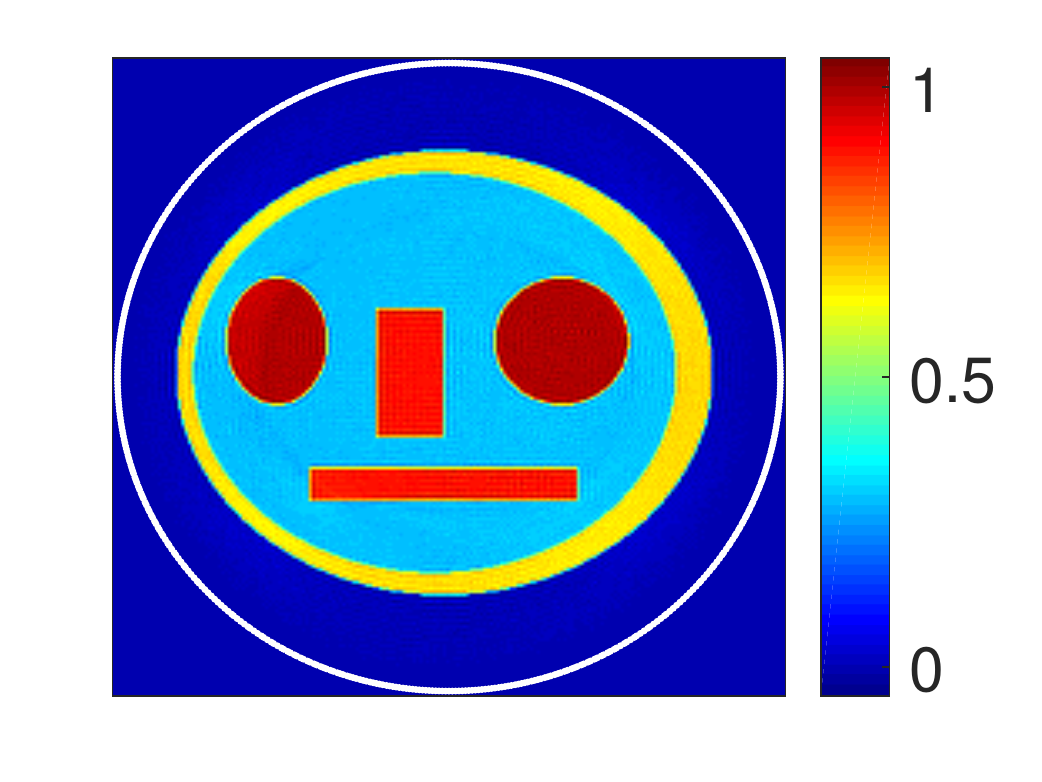}
\includegraphics[width =0.3\textwidth]{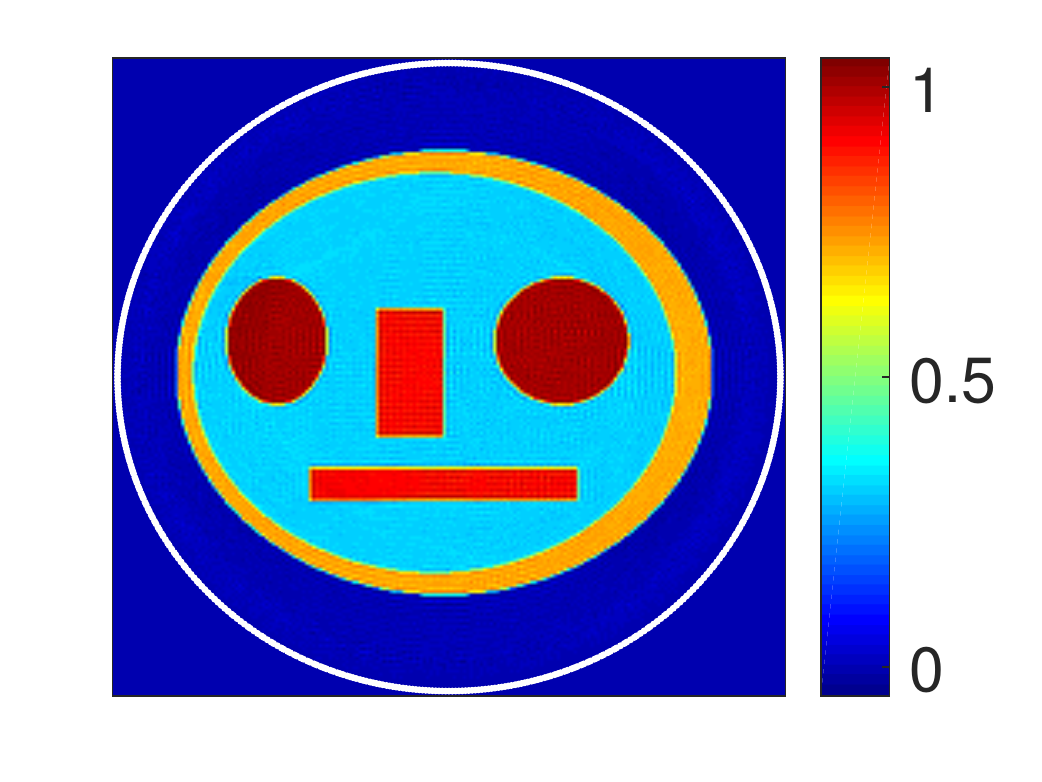}\\
\includegraphics[width =0.3\textwidth]{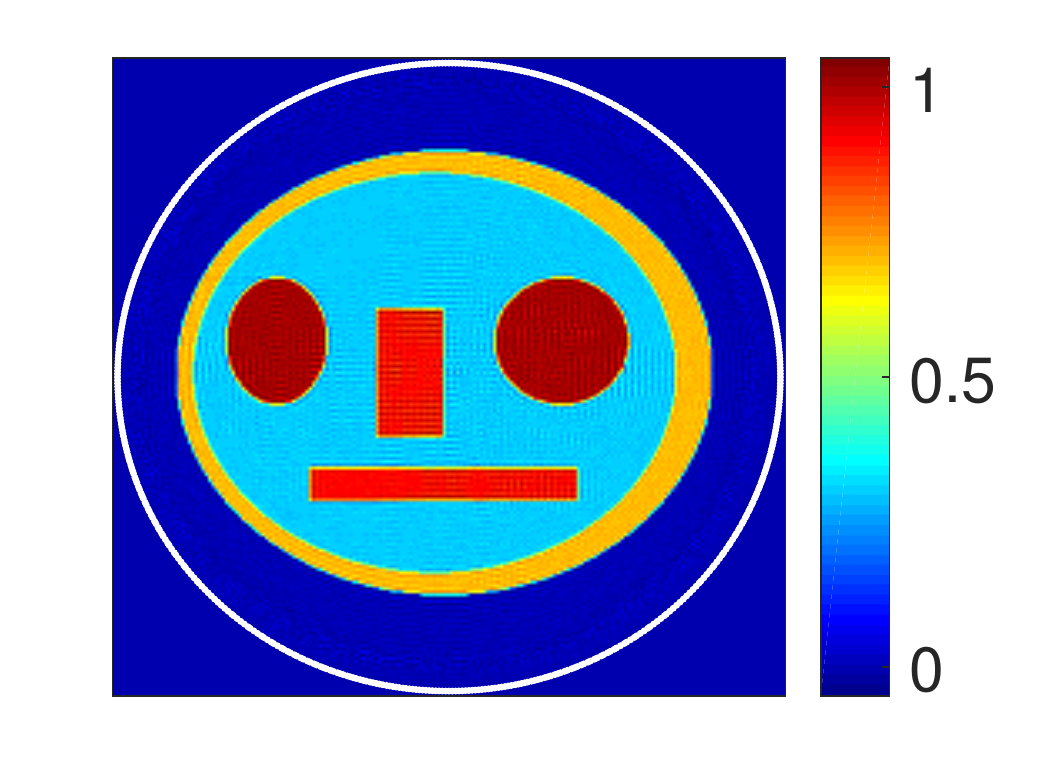}
\includegraphics[width =0.3\textwidth]{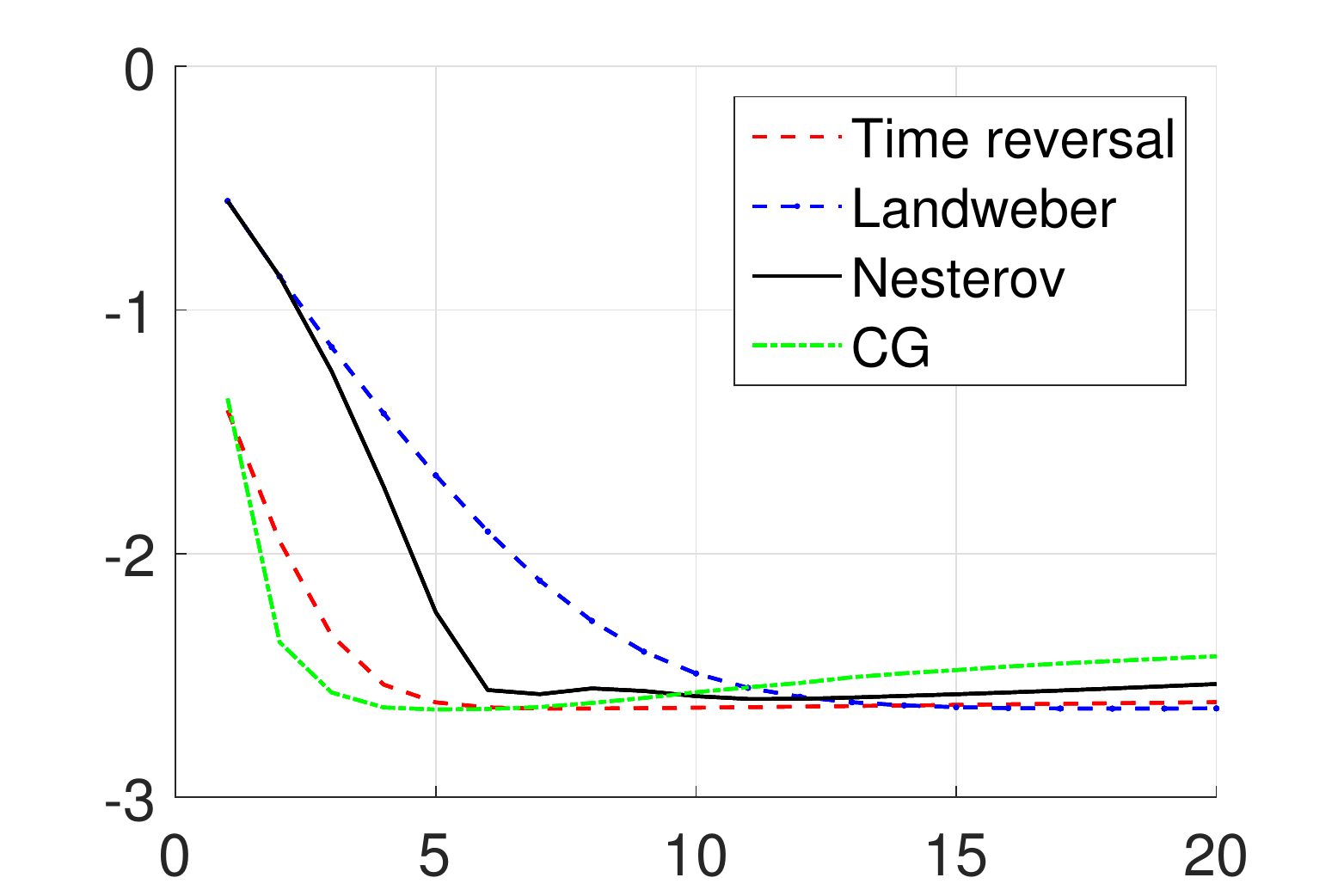}
\includegraphics[width =0.3\textwidth]{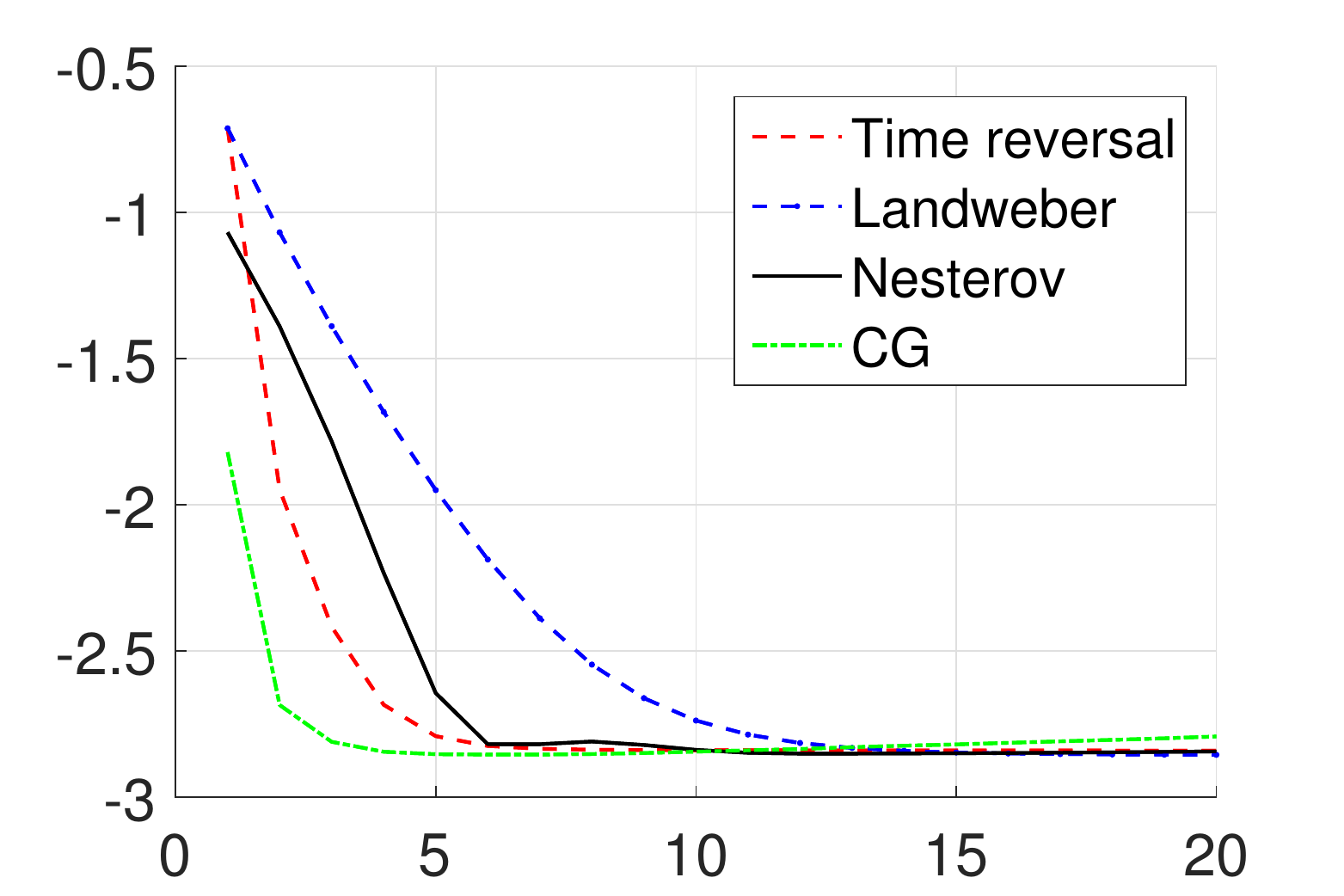}
\caption{{\scshape Test case \ref{t1}, inexact data.} Top left: Iterative time reversal,  Top center: Landweber's method,
top right: Nesterov's method. Bottom left: the CG method (all after 10 iterations). Bottom center: squared error.
Bottom right: squared residuals.\label{fig:test1noisy}}
\end{figure}

\begin{figure}[tbh!]\centering
\includegraphics[width =0.3\textwidth]{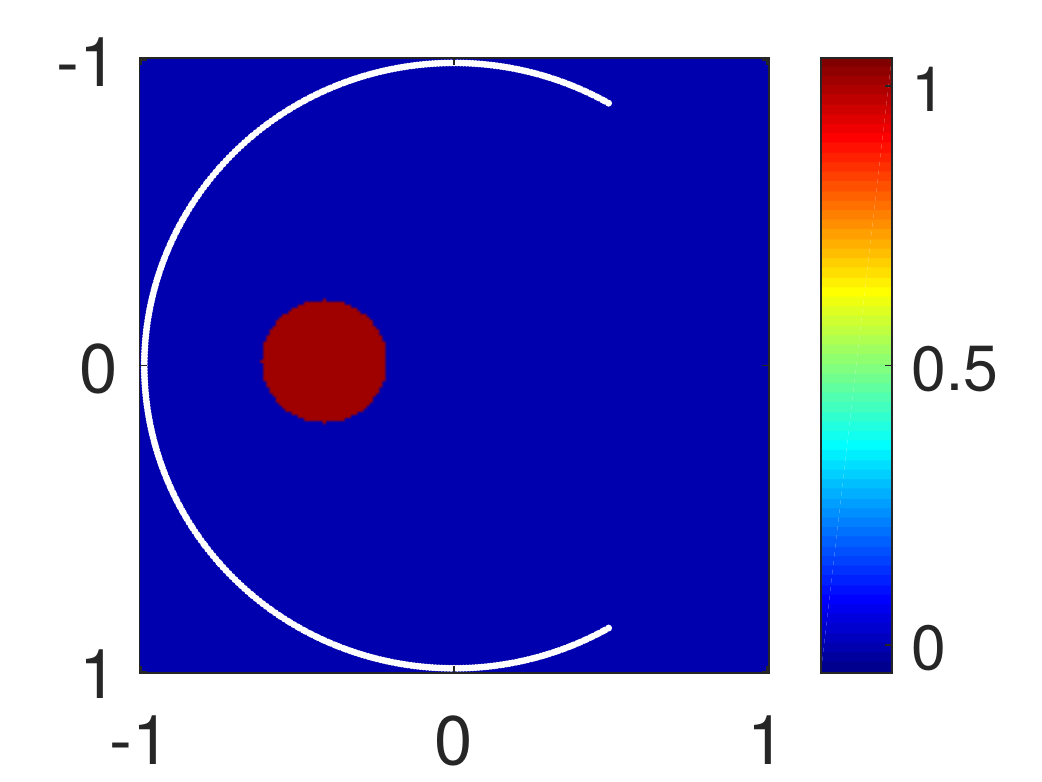}
\includegraphics[width =0.3\textwidth]{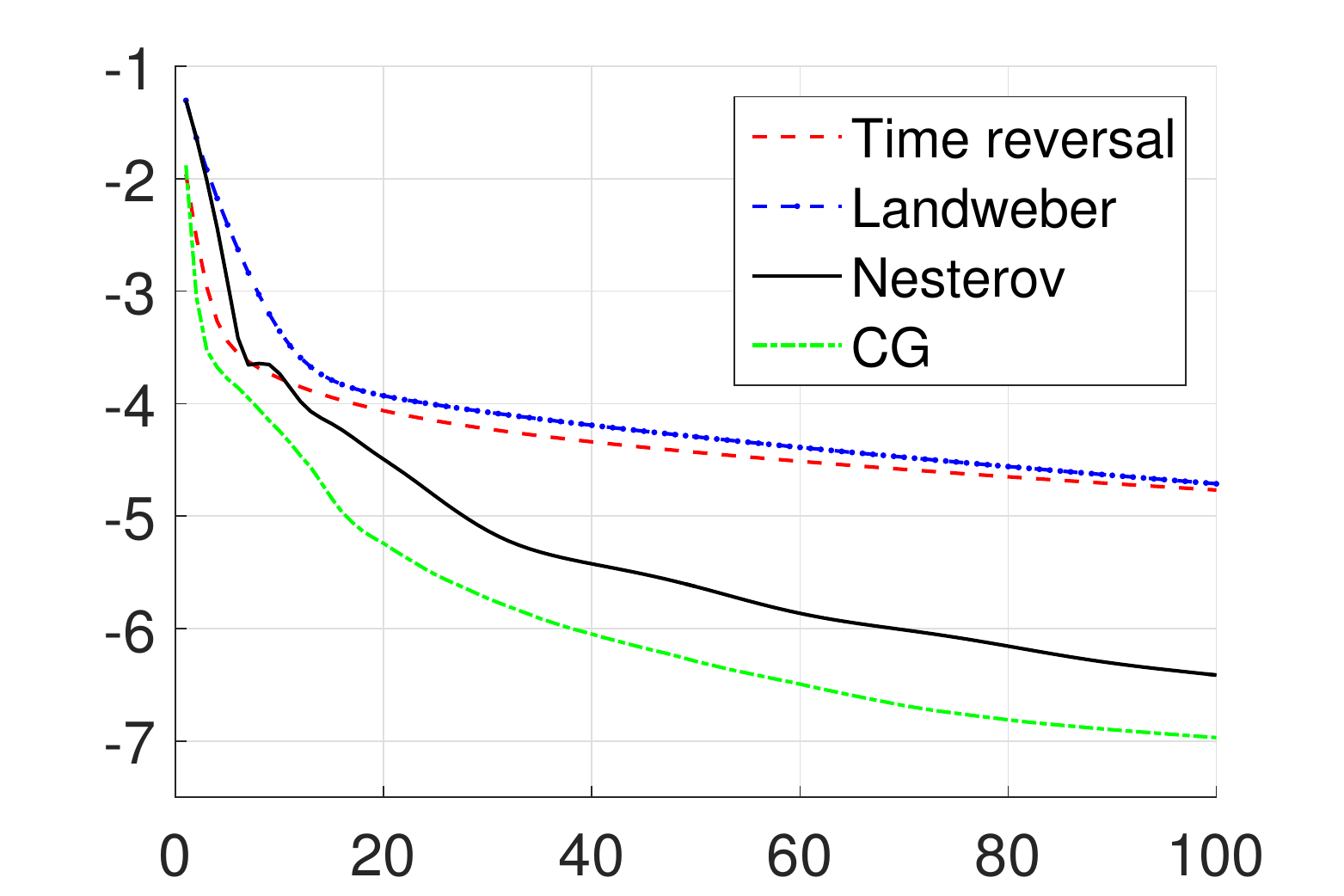}
\includegraphics[width =0.3\textwidth]{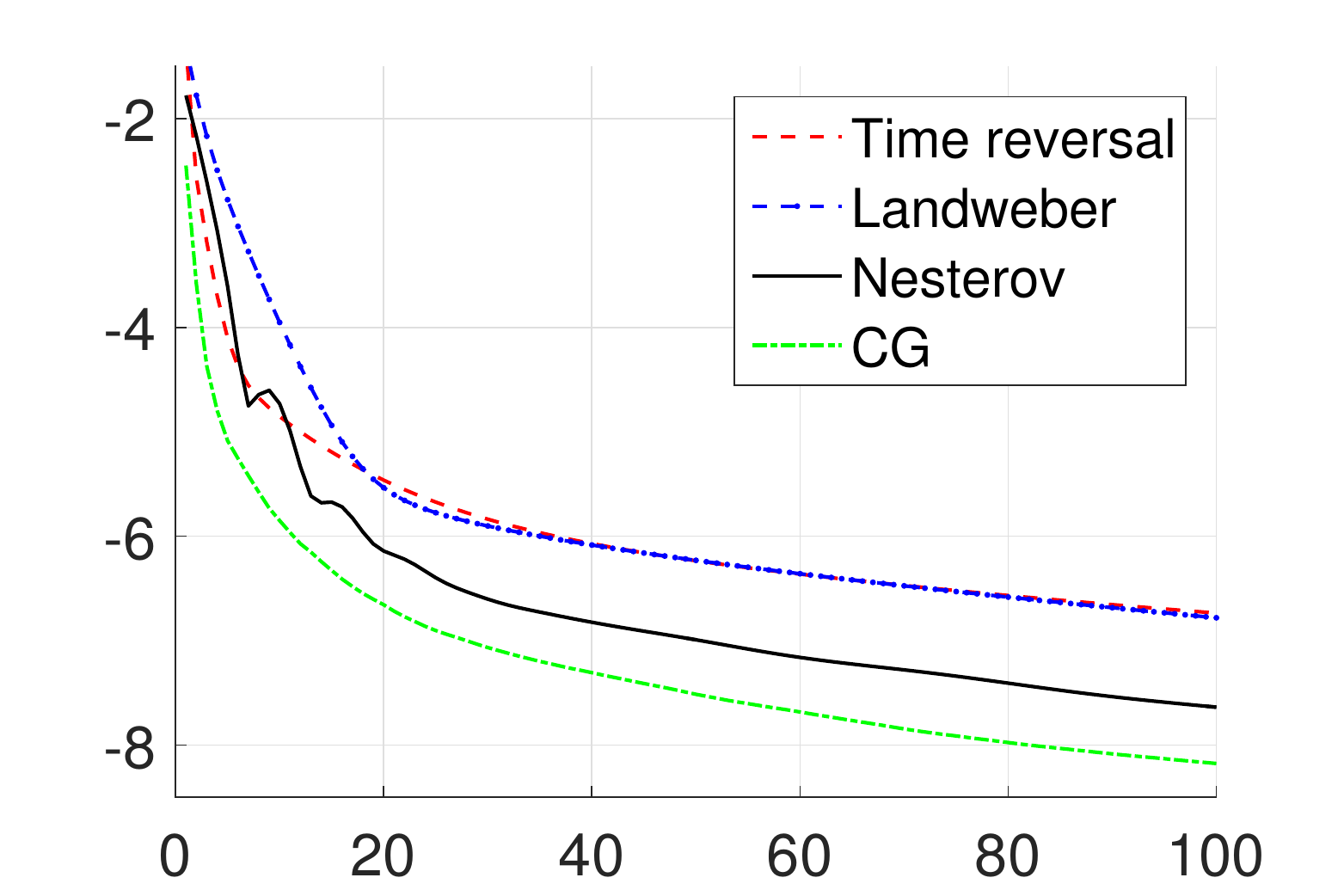}
\caption{{\scshape Test case \ref{t2},  exact data.}
Left: Initial pressure data $f$. Middle:  
Logarithm of squared reconstruction error $\norm{\fnum_n - \fnum}^2_2$
in dependence of the iteration  number.  Right:  Logarithm of residual $\norm{\Lnum_{N,M}\fnum_n - \gnum}^2_2$
in dependence of the iteration  number.\label{fig:test2}}
\end{figure}

\begin{figure}[tbh!]\centering
\includegraphics[width =0.3\textwidth]{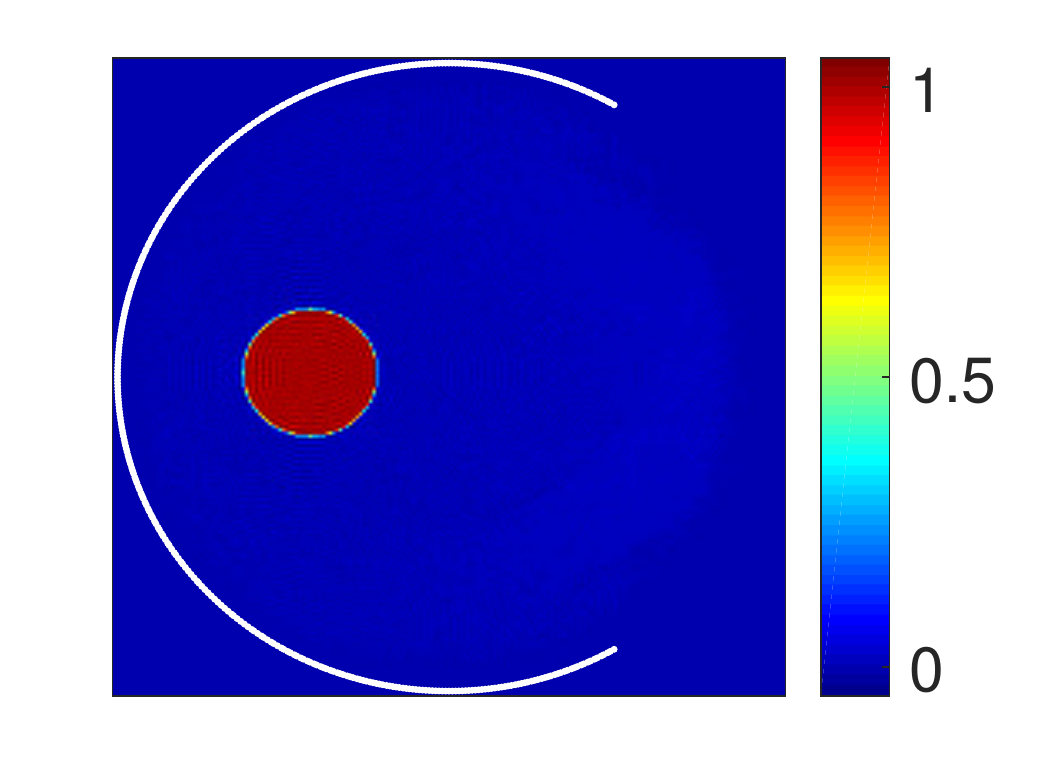}
\includegraphics[width =0.3\textwidth]{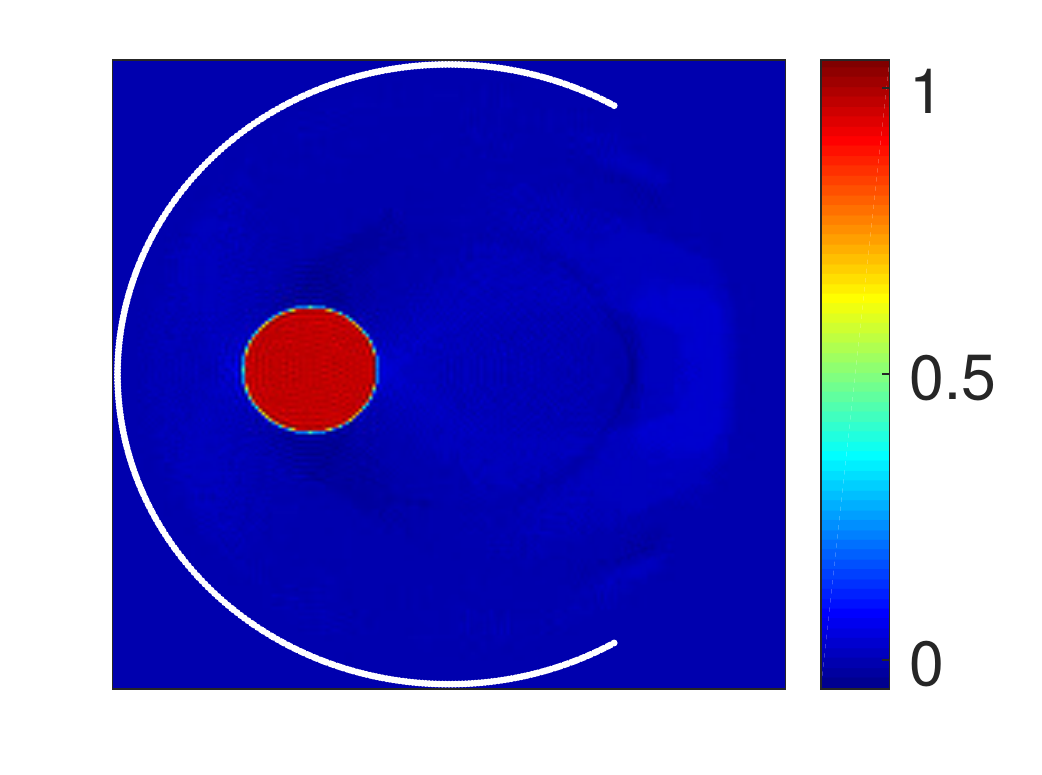}
\includegraphics[width =0.3\textwidth]{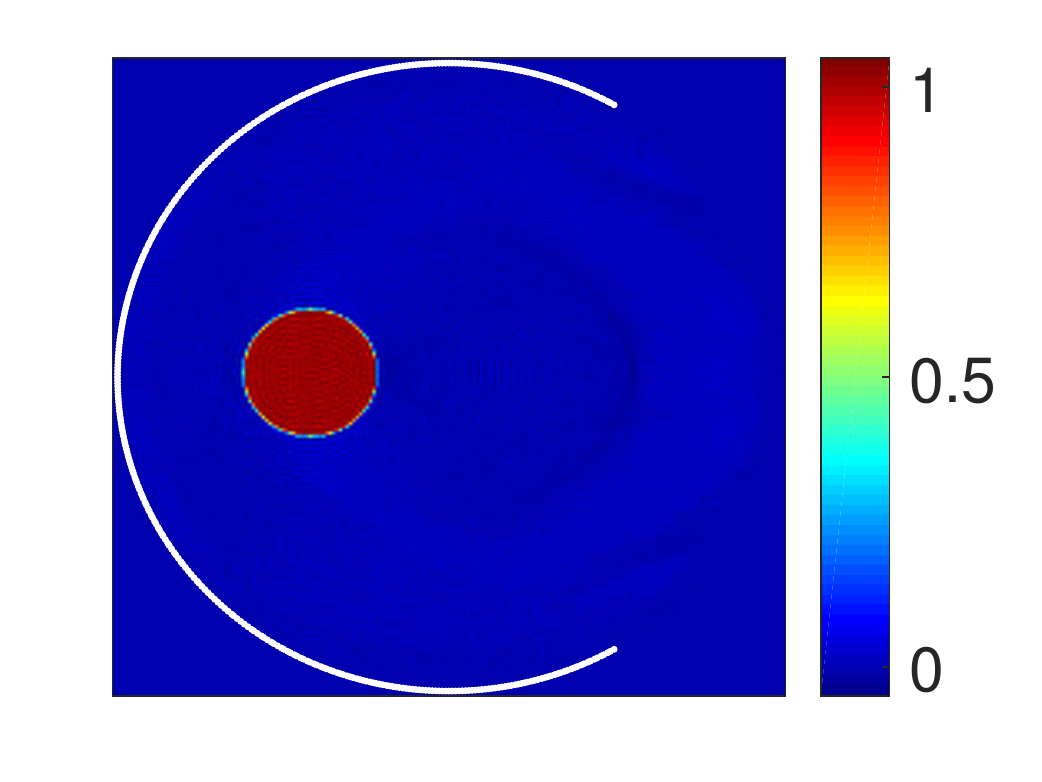}\\
\includegraphics[width =0.3\textwidth]{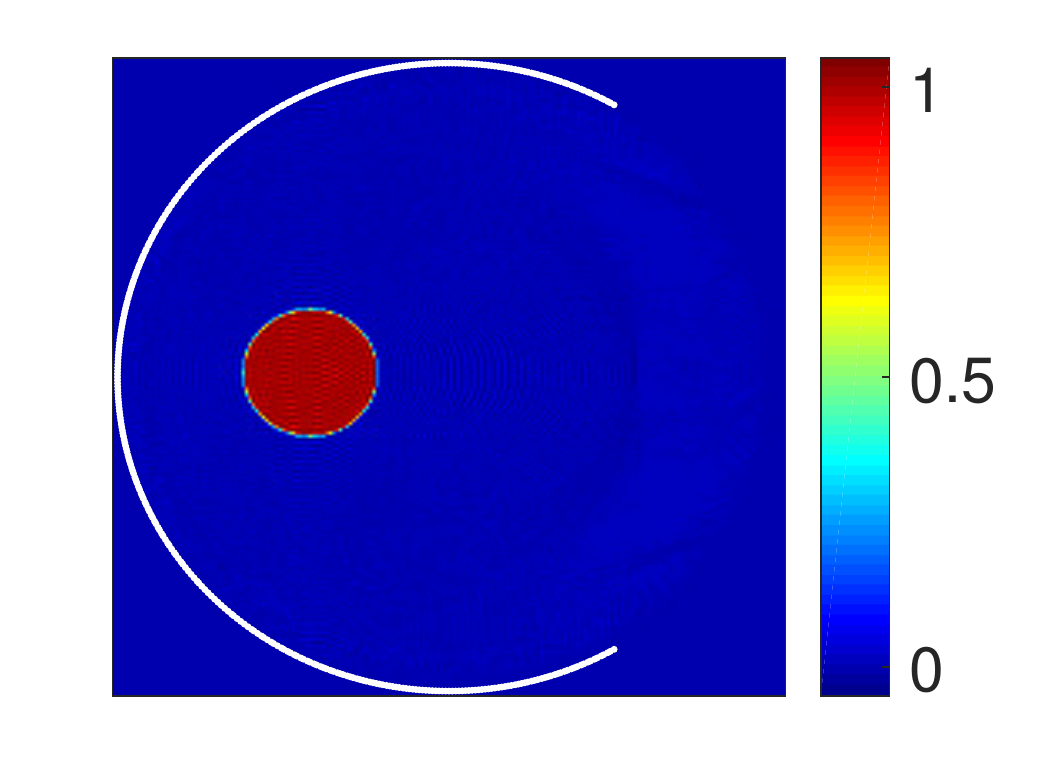}
\includegraphics[width =0.3\textwidth]{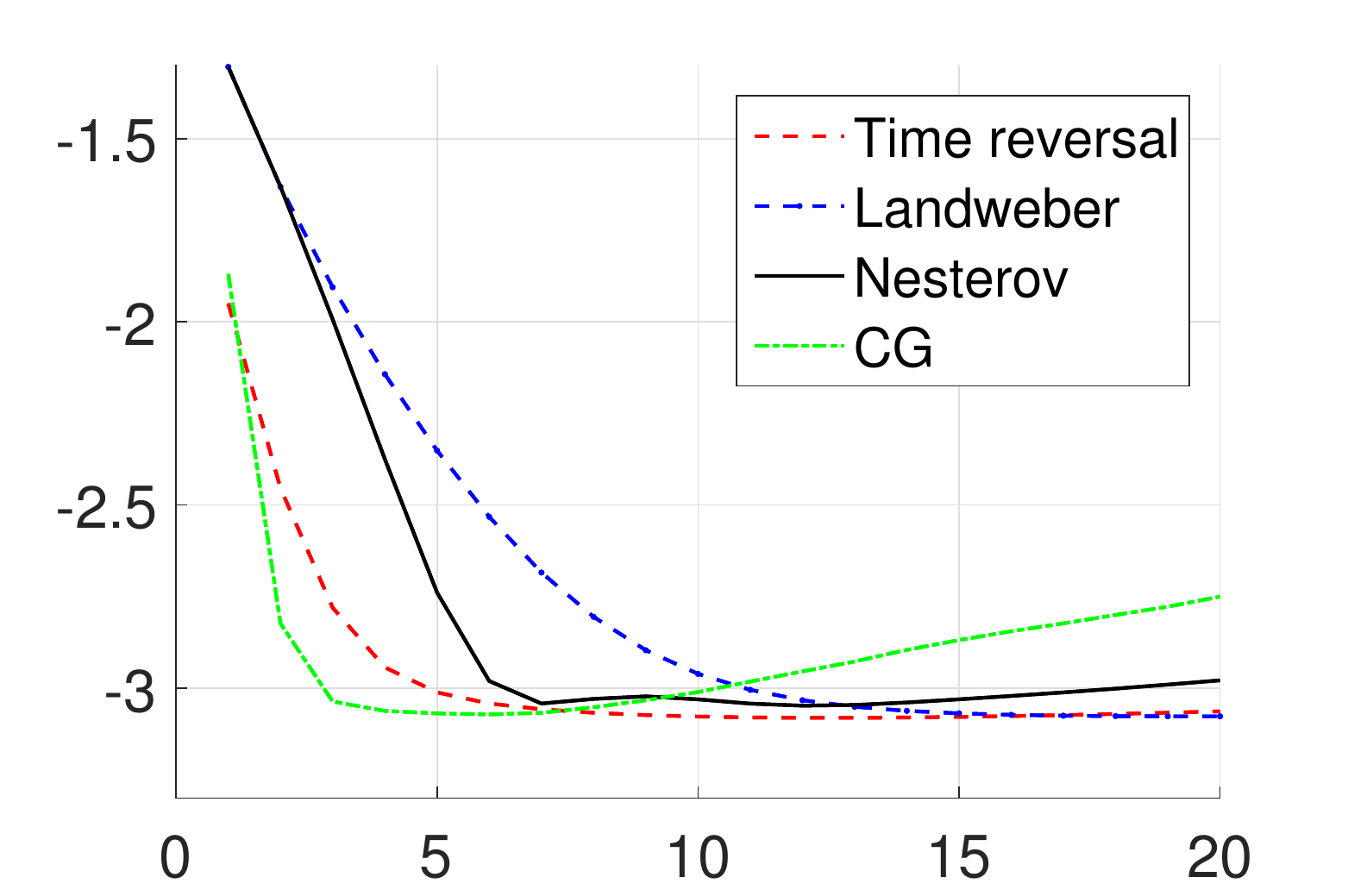}
\includegraphics[width =0.3\textwidth]{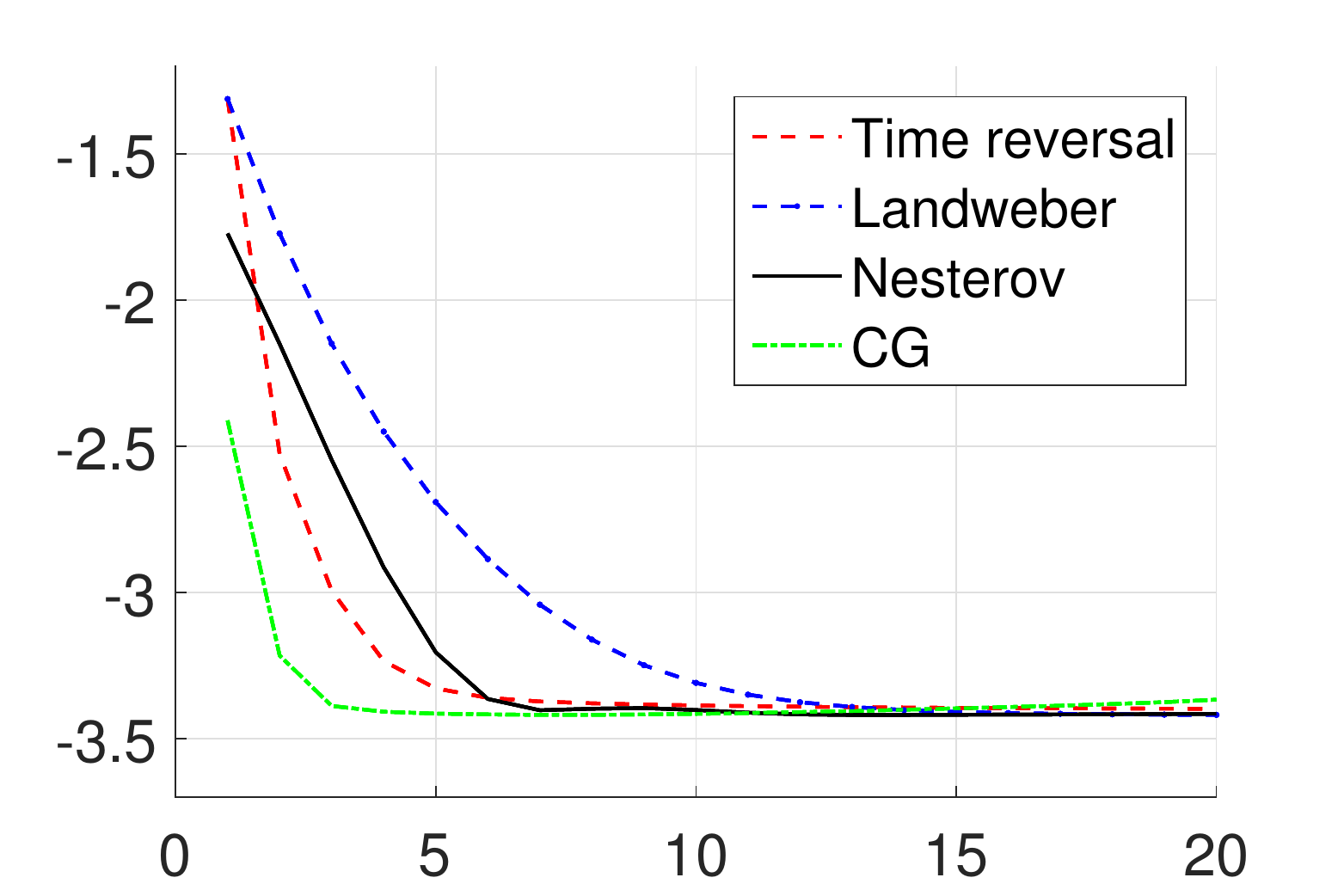}
\caption{{\scshape Test case \ref{t2}, inexact data.} Top left: Iterative time reversal,  Top center: Landweber's method,
top right: Nesterov's method. Bottom left: the CG method (all after 10 iterations). Bottom center: squared error.
Bottom right: squared residuals.\label{fig:test2noisy}}
\end{figure}

Note that we do not show results using the  Landweber's method proposed in~\cite{belhachmi2016direct}.  Due to the
smoothing operator  $-\Delta^{-1}$ (which is the adjoint of  the embedding $H^1_0(\Om) \hookrightarrow L^2(\Om)$),
that method is much slower than the Landweber's method presented in the present article.
On the other hand, the  application of  $-\Delta^{-1}$  may have the advantage of stabilizing the iteration.

\subsection{Test case \ref{t2}:  Partial  data, visible phantom}
\label{sec:test2}

As next  test case we investigate the case of partial data where all singularities
of the  phantom are  visible.  As before we compare iterative time reversal, Landweber's,
Nesterov's, and the CG methods using the sound speed given in \eqref{eq:ss-nontrapping}. We again take $N=200$, $R=1$, $T=1.5$ and $M=800$.
The the phantom is shown in shown left image in Figure
\ref{fig:test2}.   One notices that the partial data have  been collected on an arc with opening  angle $4\pi/3$.   The middle and right image in 
 Figure~\ref{fig:test2} show the reconstruction error and the residuals 
 depending on the iteration index $n$. One observes a similar 
 asymptotic  behavior as for the test case \ref{t1} with exact data. In particular 
 the CG iteration is again the most rapidly converging method.
Also the  convergence behavior in the first iterations is similar to the complete 
data case;  due to space limitations we do not show the corresponding pictures.

To avoid  inverse crimes and to investigate the behavior of the algorithms under  real life  scenario,
we repeated the simulations with inexact data where we  simulated the data on a different grid (using $N=350$ and $M=1300$) and further added Gaussian noise to the data (again with a standard deviation equal to $5\%$ of the $L^2$-norm of the exact data).
The reconstruction results from inexact data are shown in Figure~\ref{fig:test2noisy}. They clearly
demonstrate that all schemes provide good results. The CG method is again the fastest.  The error $     \norm{\Lnum_{N,M}\fnum - \gnum^\delta}_2$  in the data is 0.0258.  The residuals after 10 iterations are 0.2208 for the iterative time reversal,  0.0196 for the CG,  0.0199 for Nesterov's, and  0.0222 for the Landweber's methods.  This in particular also shows that the discrepancy principle yields
a well defined stopping index with an reconstruction error in the order of the data error.

\begin{figure}[tbh!]\centering
\includegraphics[width =0.3\textwidth]{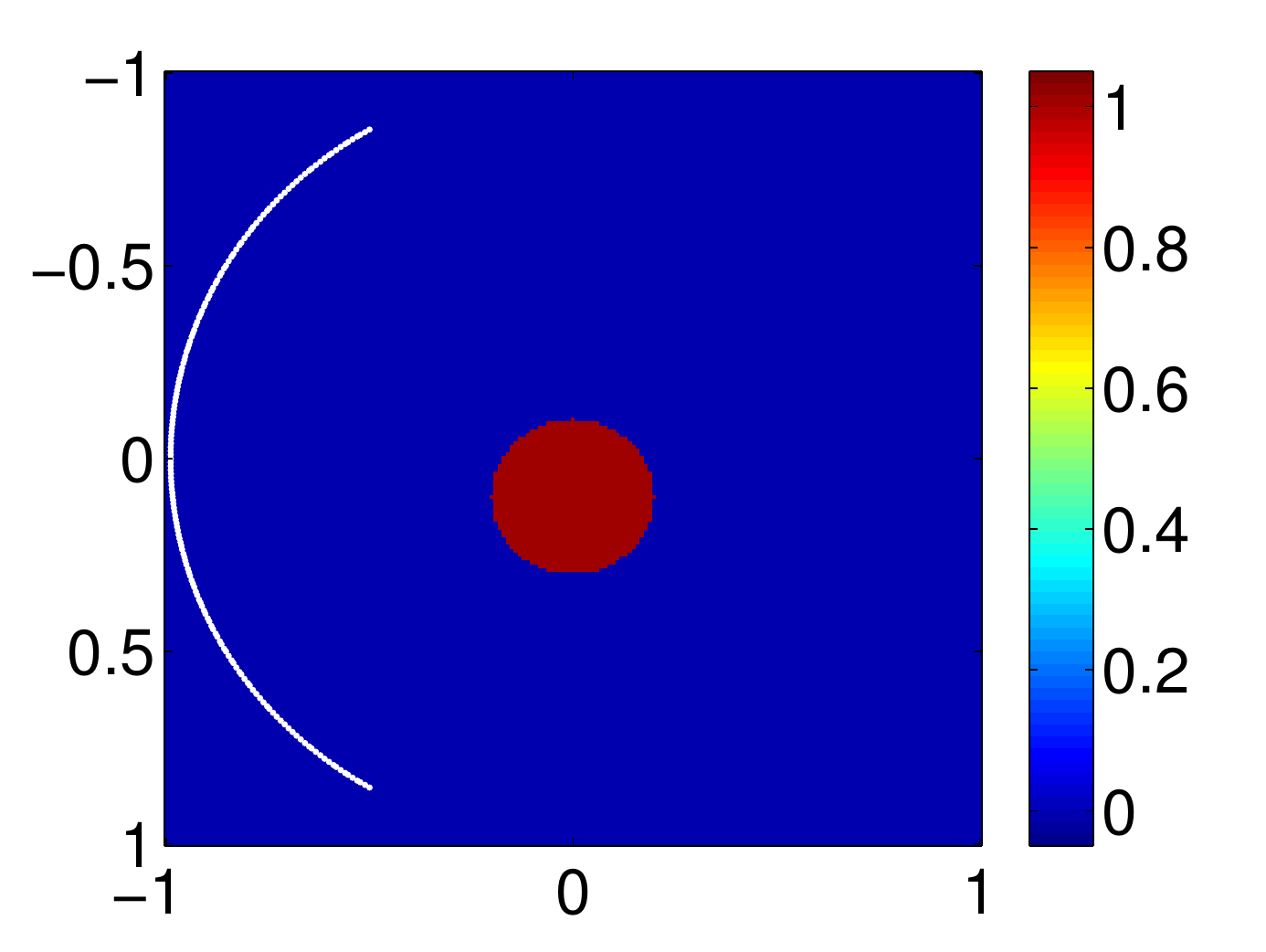}
\includegraphics[width =0.3\textwidth]{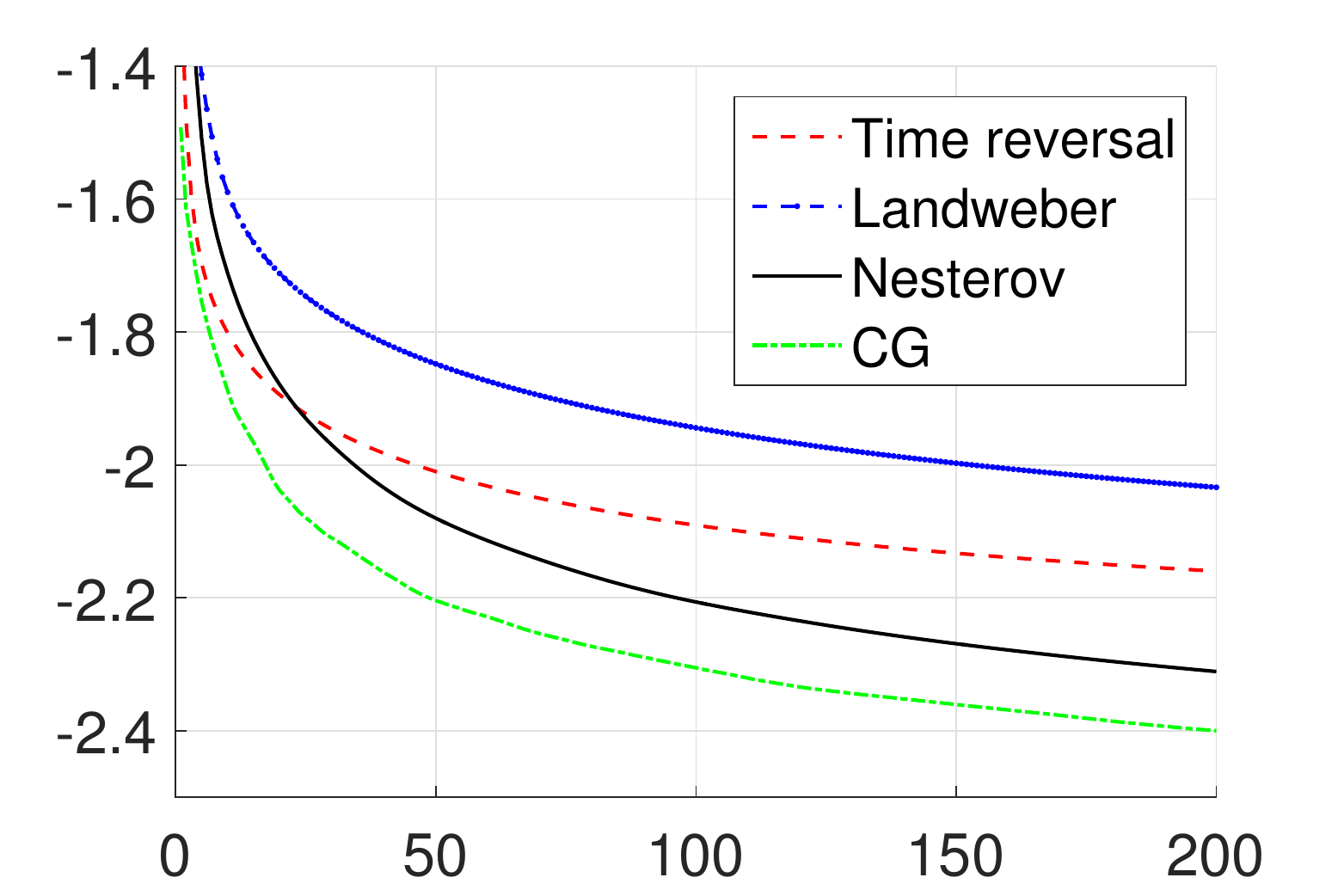}
\includegraphics[width =0.3\textwidth]{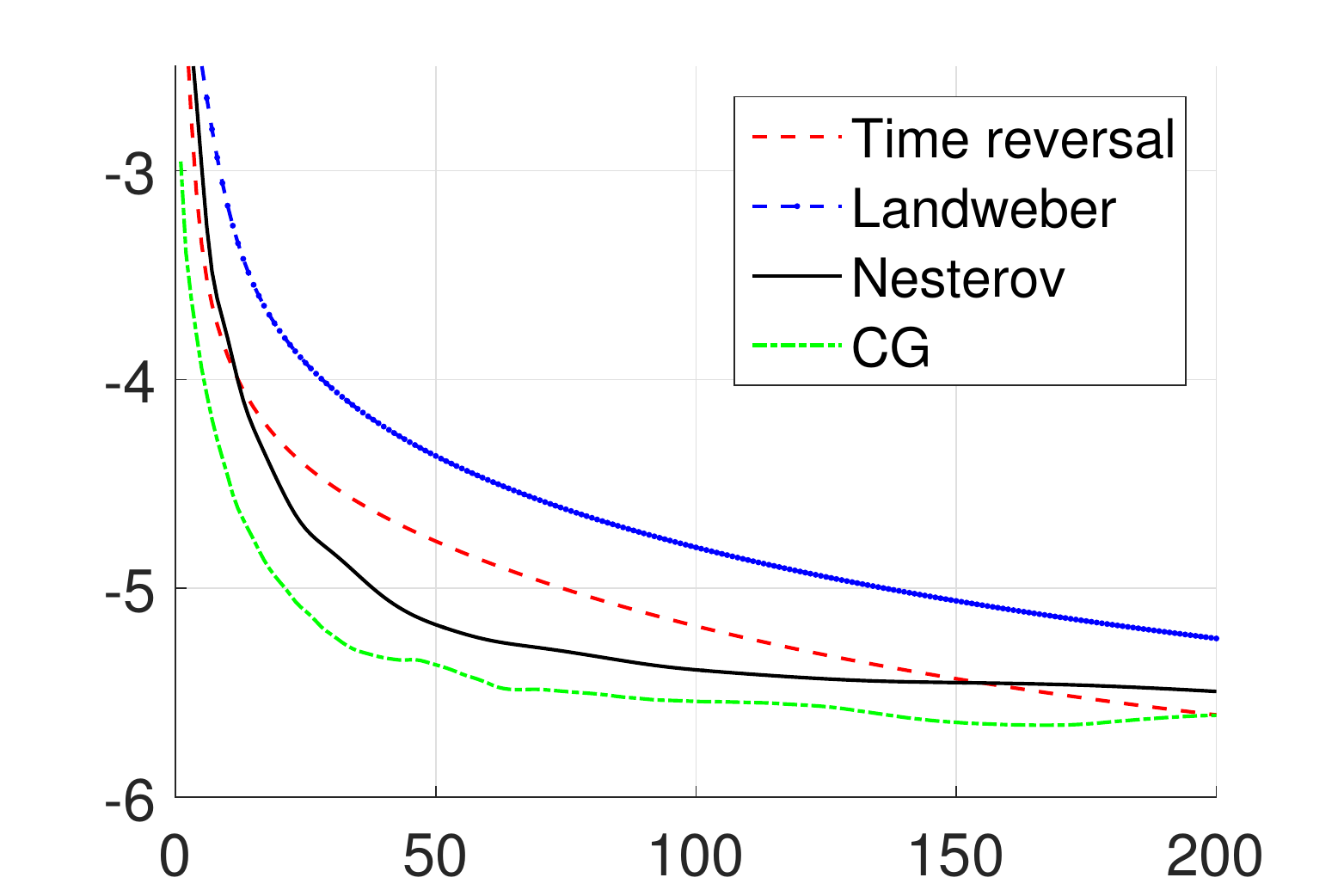}\\
\includegraphics[width =0.3\textwidth]{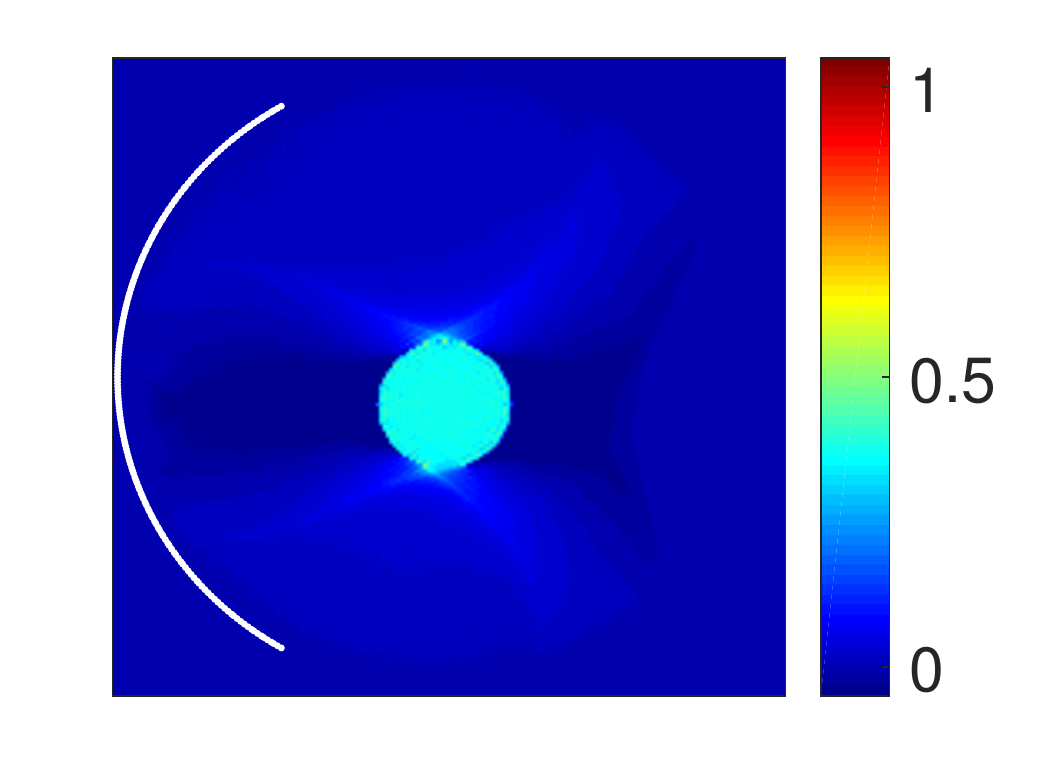}
\includegraphics[width =0.3\textwidth]{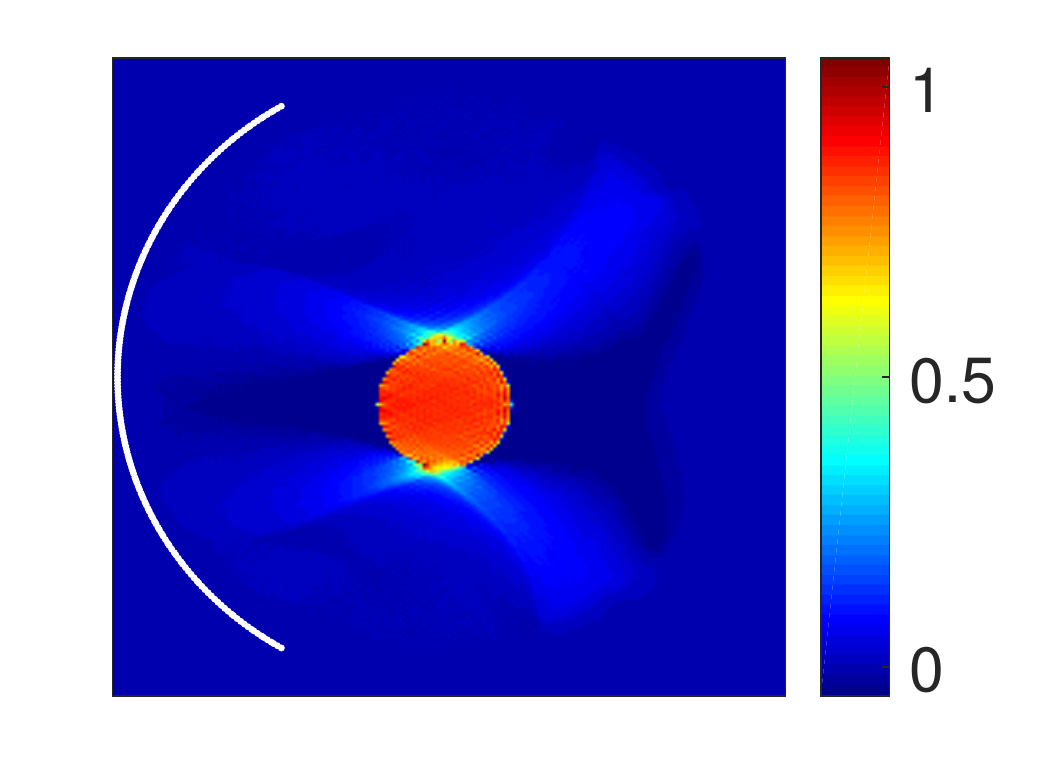}
\includegraphics[width =0.3\textwidth]{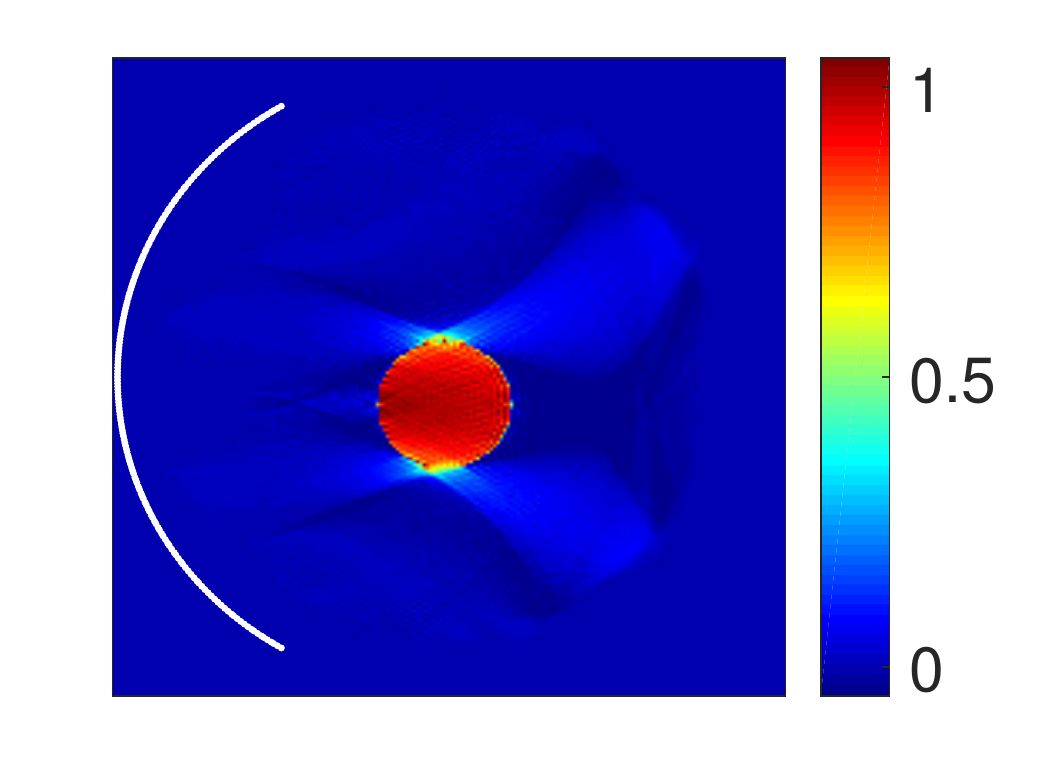}\\
\includegraphics[width =0.3\textwidth]{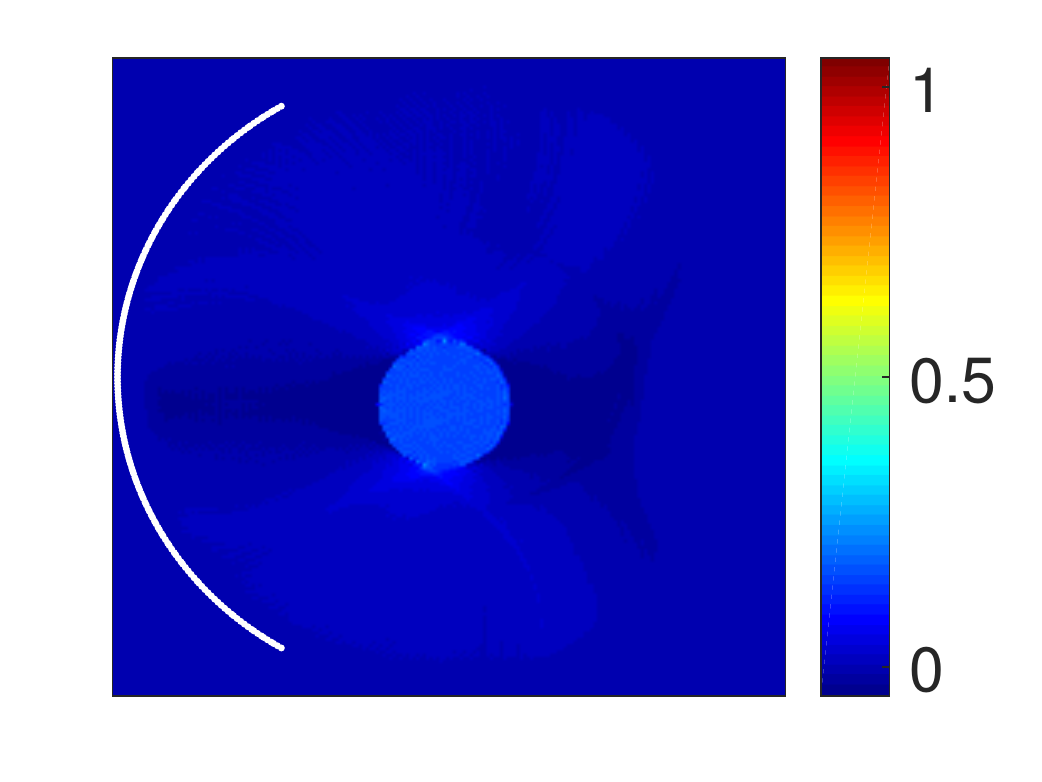}
\includegraphics[width =0.3\textwidth]{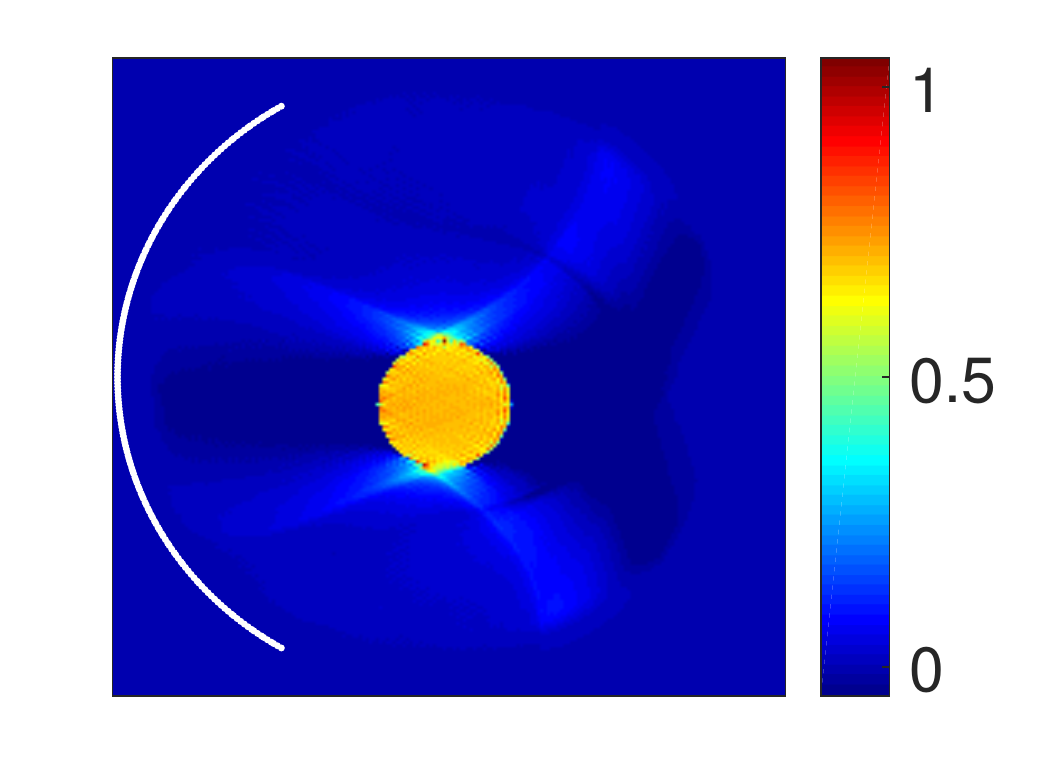}
\includegraphics[width =0.3\textwidth]{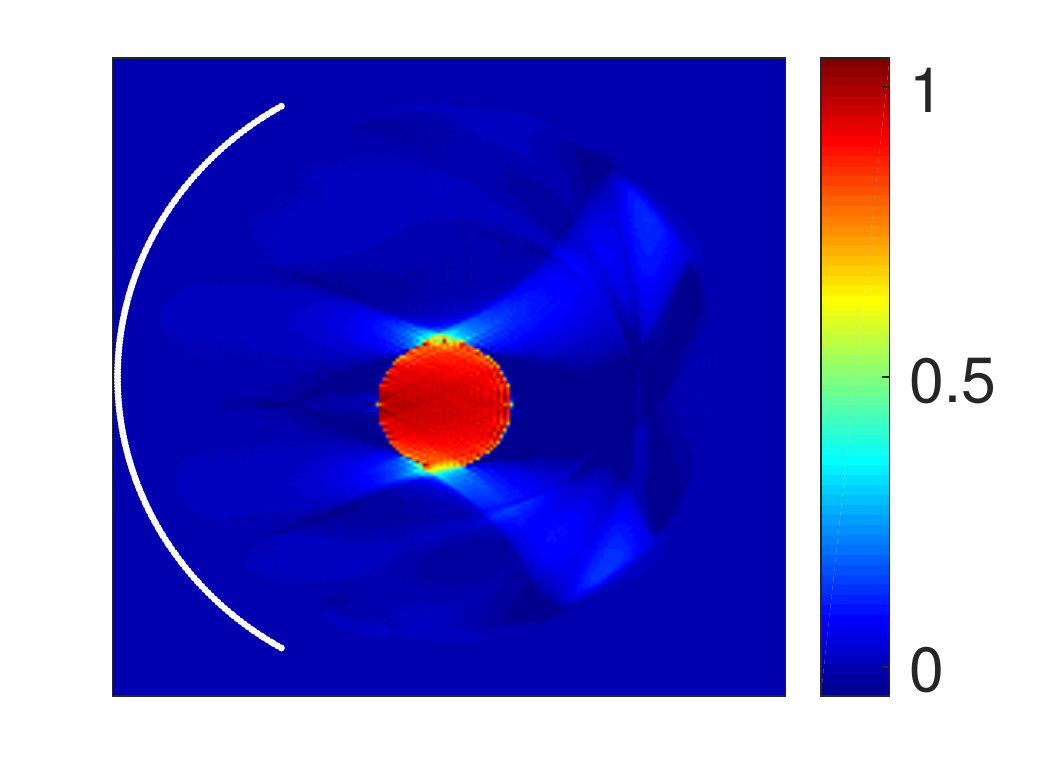}\\
\includegraphics[width =0.3\textwidth]{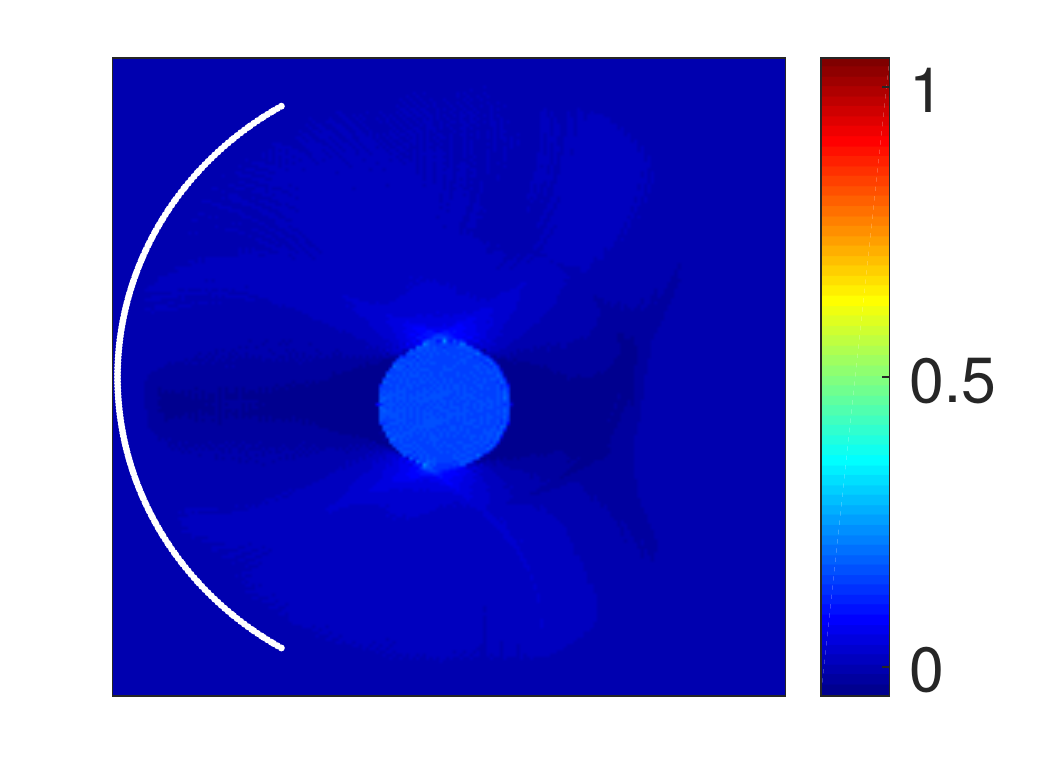}
\includegraphics[width =0.3\textwidth]{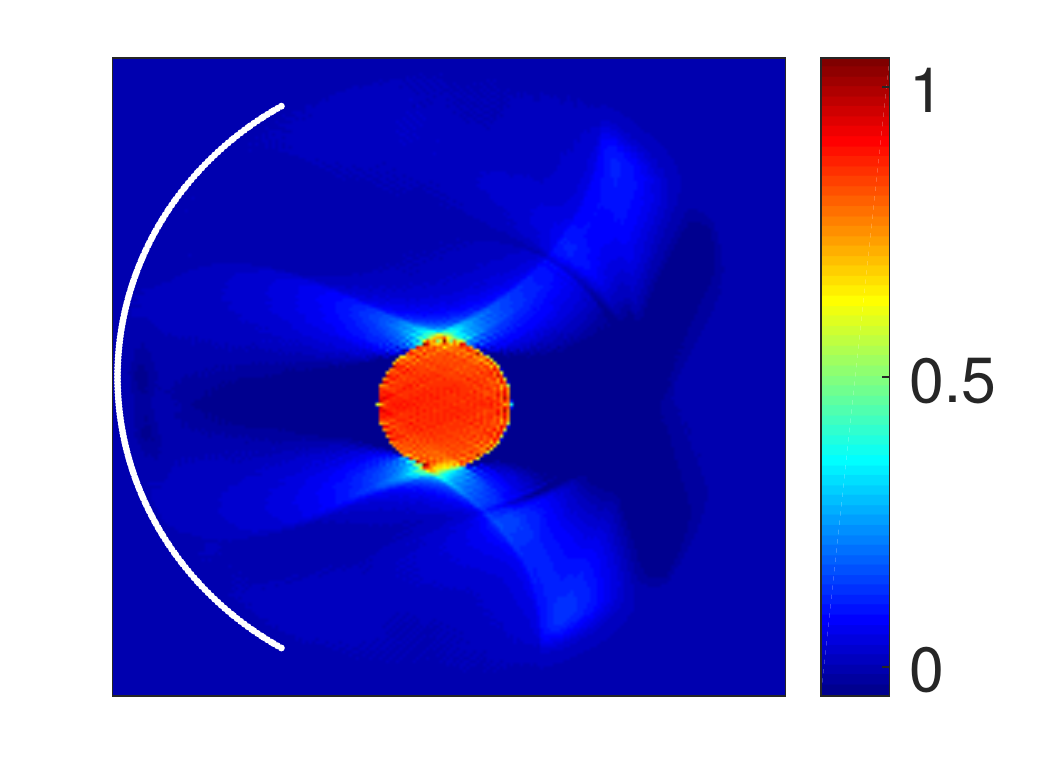}
\includegraphics[width =0.3\textwidth]{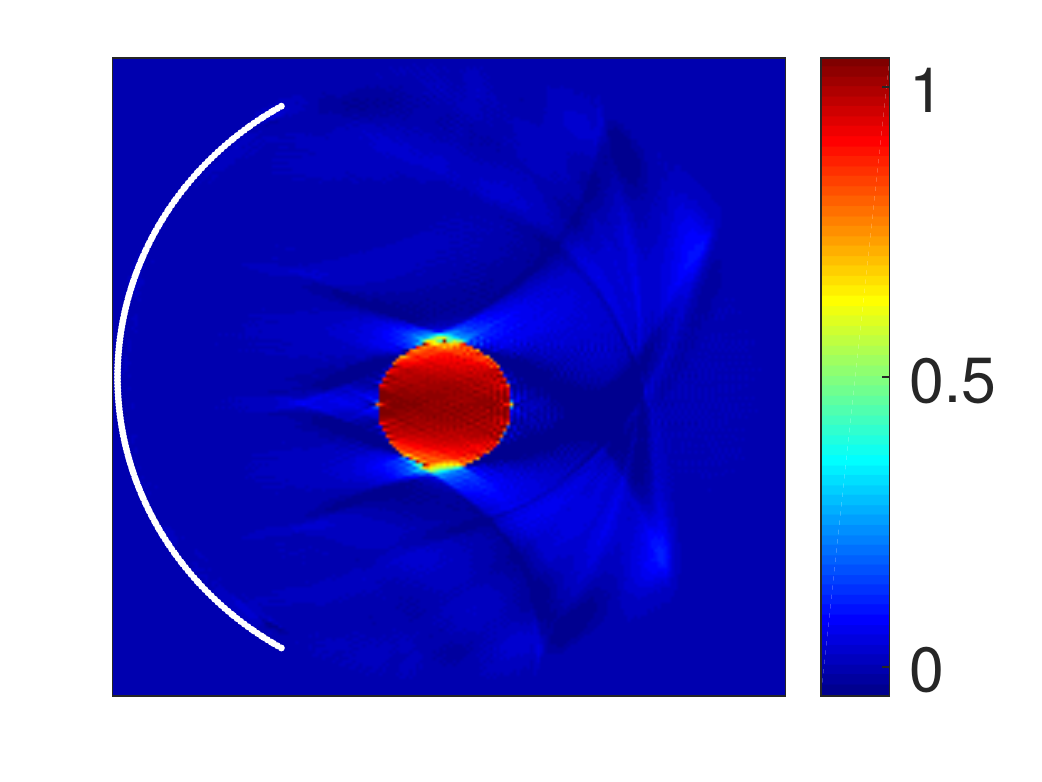}\\
\includegraphics[width =0.3\textwidth]{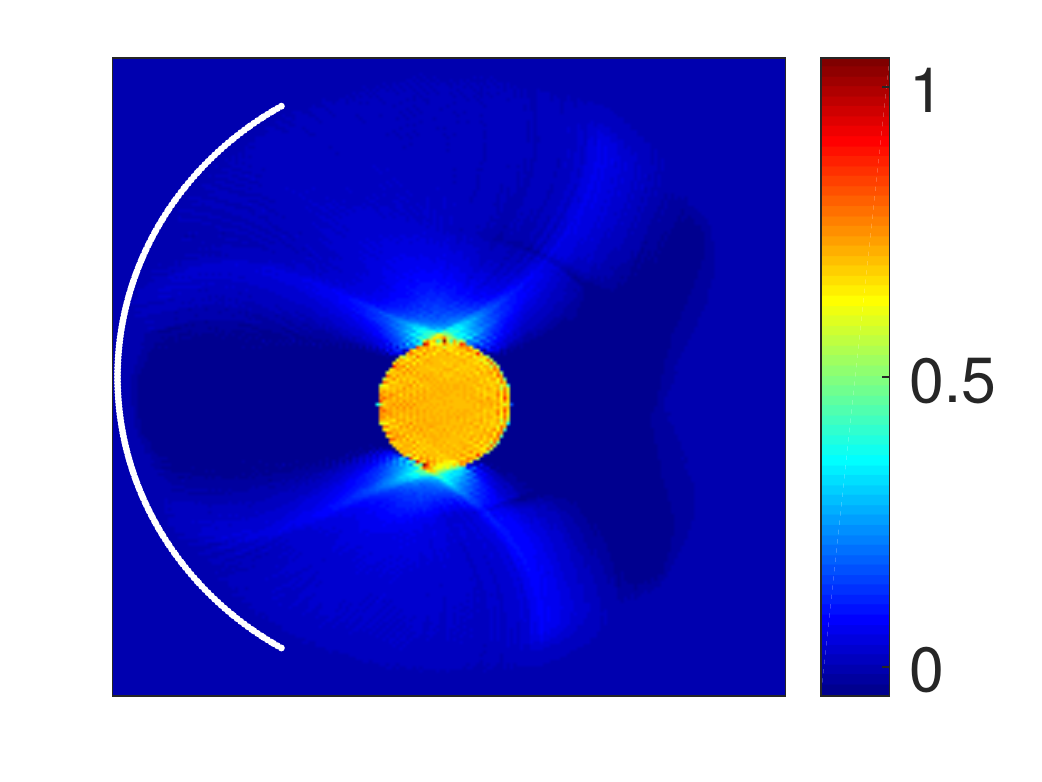}
\includegraphics[width =0.3\textwidth]{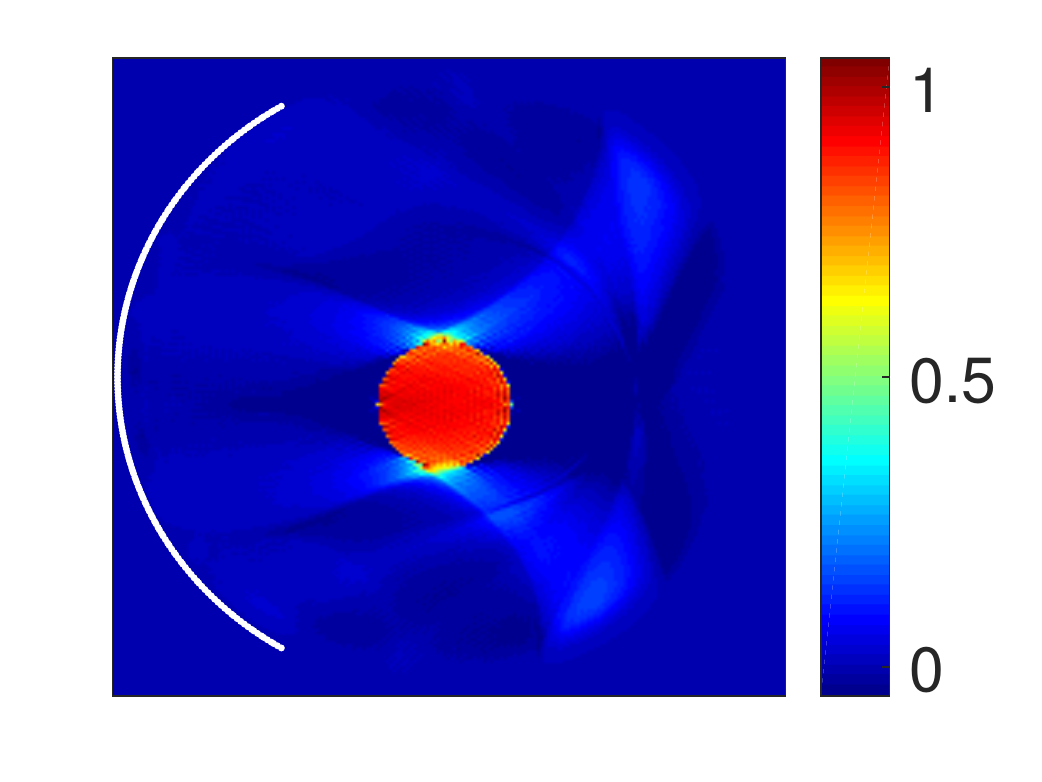}
\includegraphics[width =0.3\textwidth]{pics/case3/3211.pdf}\\
\caption{{\scshape Test case~\ref{t3}: partial data, invisible phantom}
Row 1: Initial pressure data $f$ (left), 
Logarithm of squared reconstruction error $\norm{\fnum_n - \fnum}^2_2$
(middle), and logarithm of residual $\norm{\Lnum_{N,M}\fnum_n - \gnum}^2_2$
(right).
Row 2: Iterative time reversal (after 1, 10 and 200 iterations).
Row 3: Landweber's method (after 1, 10 and 200 iterations).
Row 4: Nesterov's method (after 1, 10 and 200 iterations).
Row 5: the CG method (after 1, 10 and 200 iterations).\label{fig:test3}}
\end{figure}

\begin{figure}[tbh!]\centering
\includegraphics[width =0.3\textwidth]{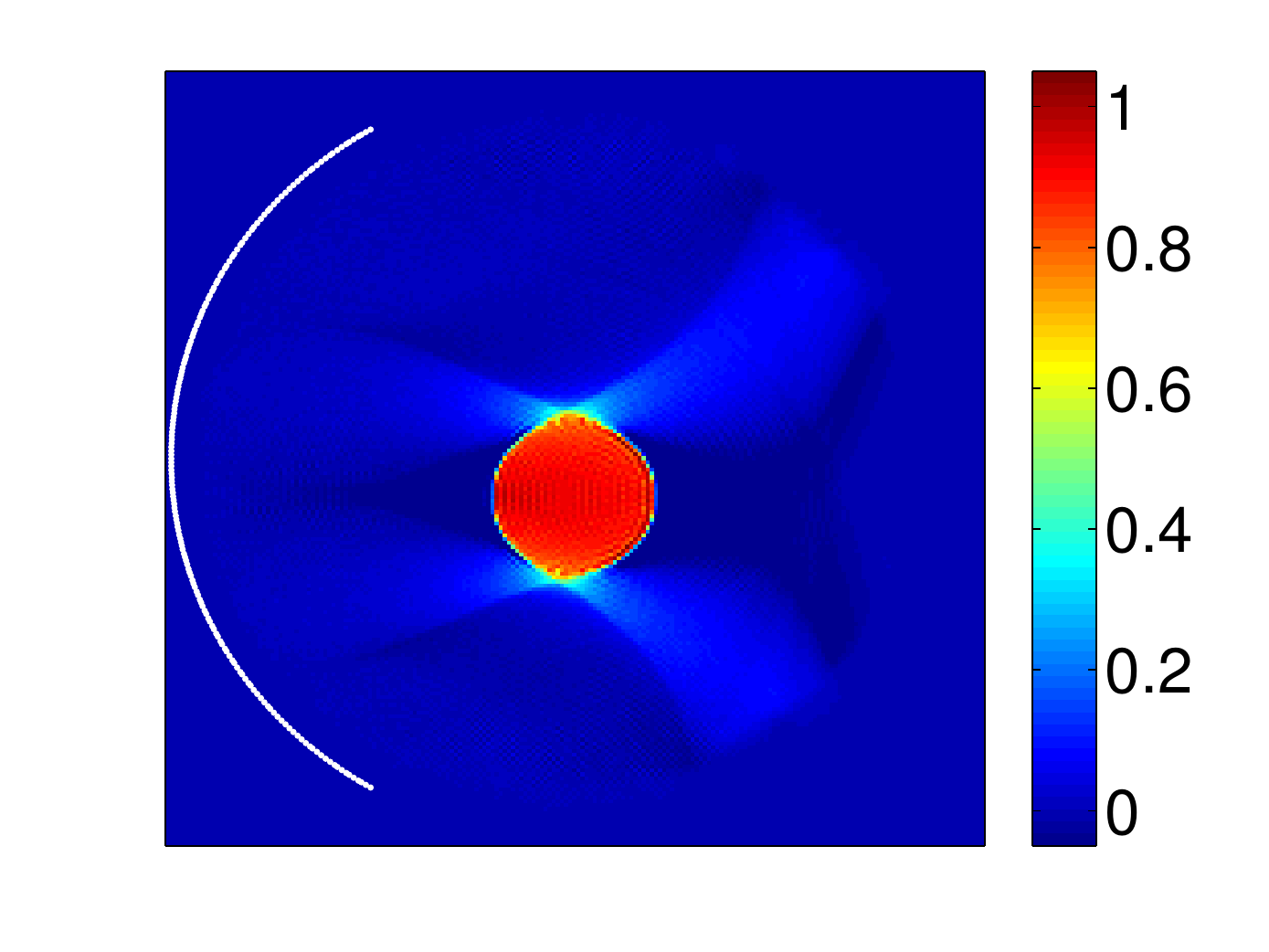}
\includegraphics[width =0.3\textwidth]{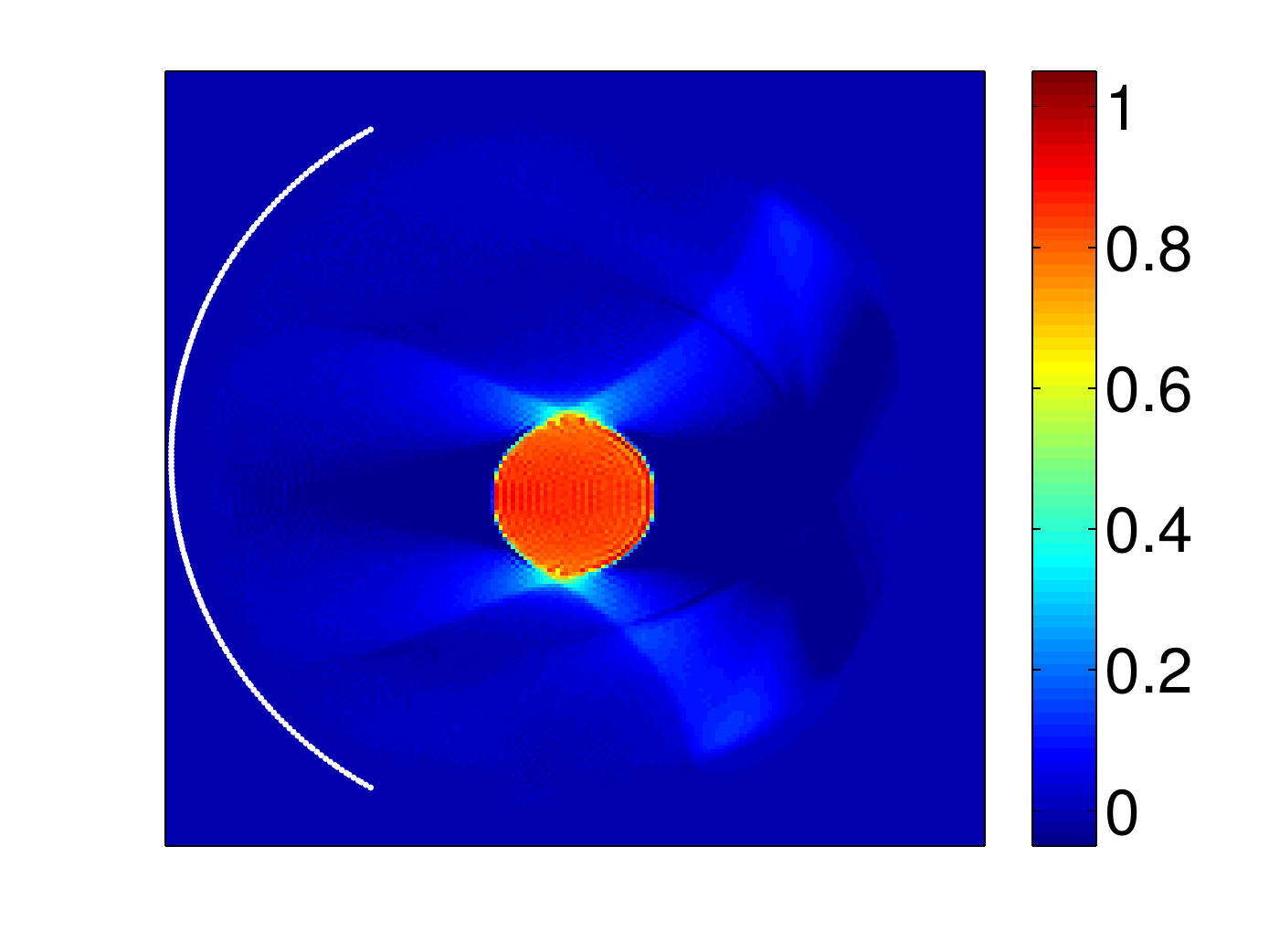}
\includegraphics[width =0.3\textwidth]{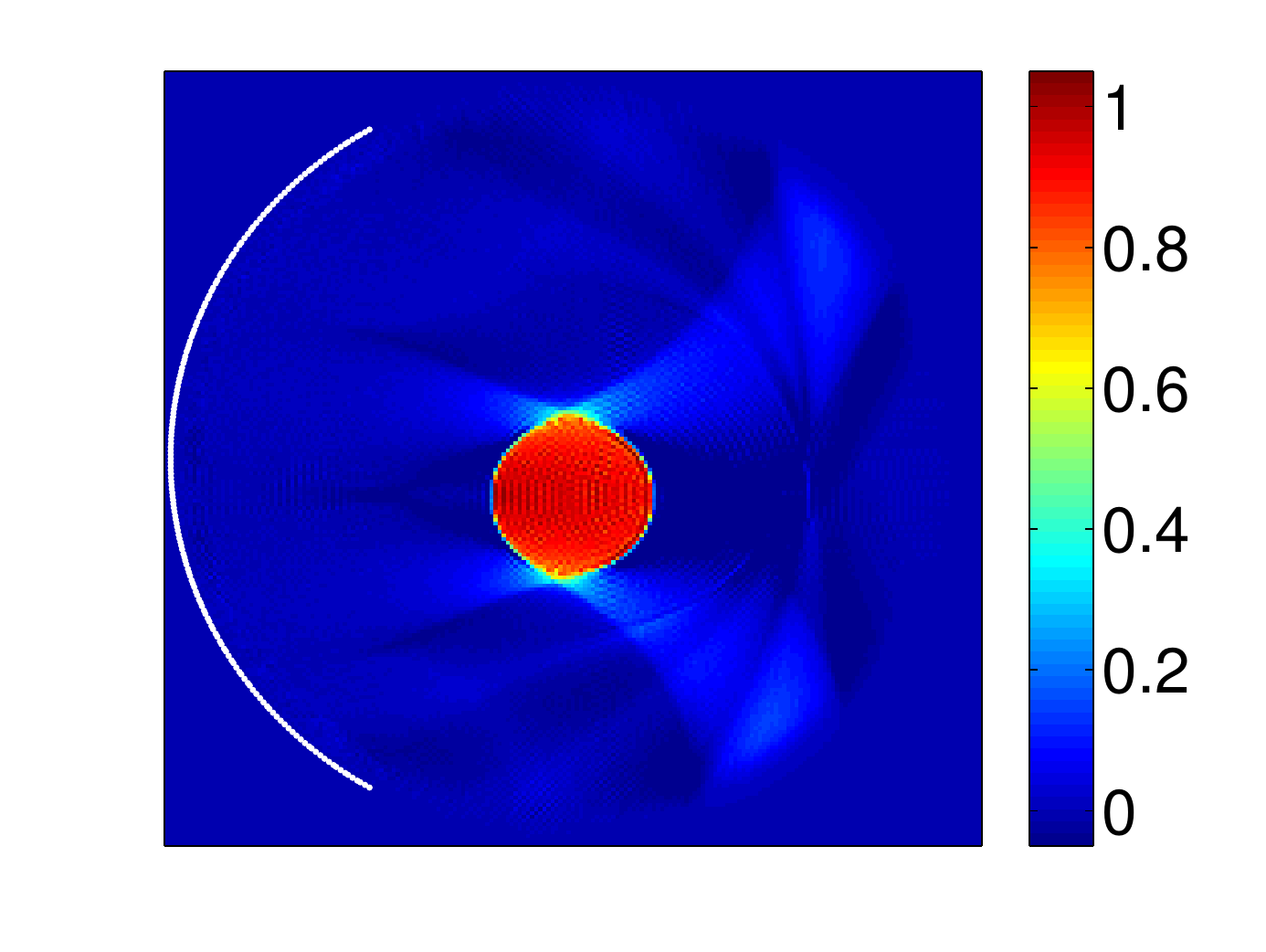}\\
\includegraphics[width =0.3\textwidth]{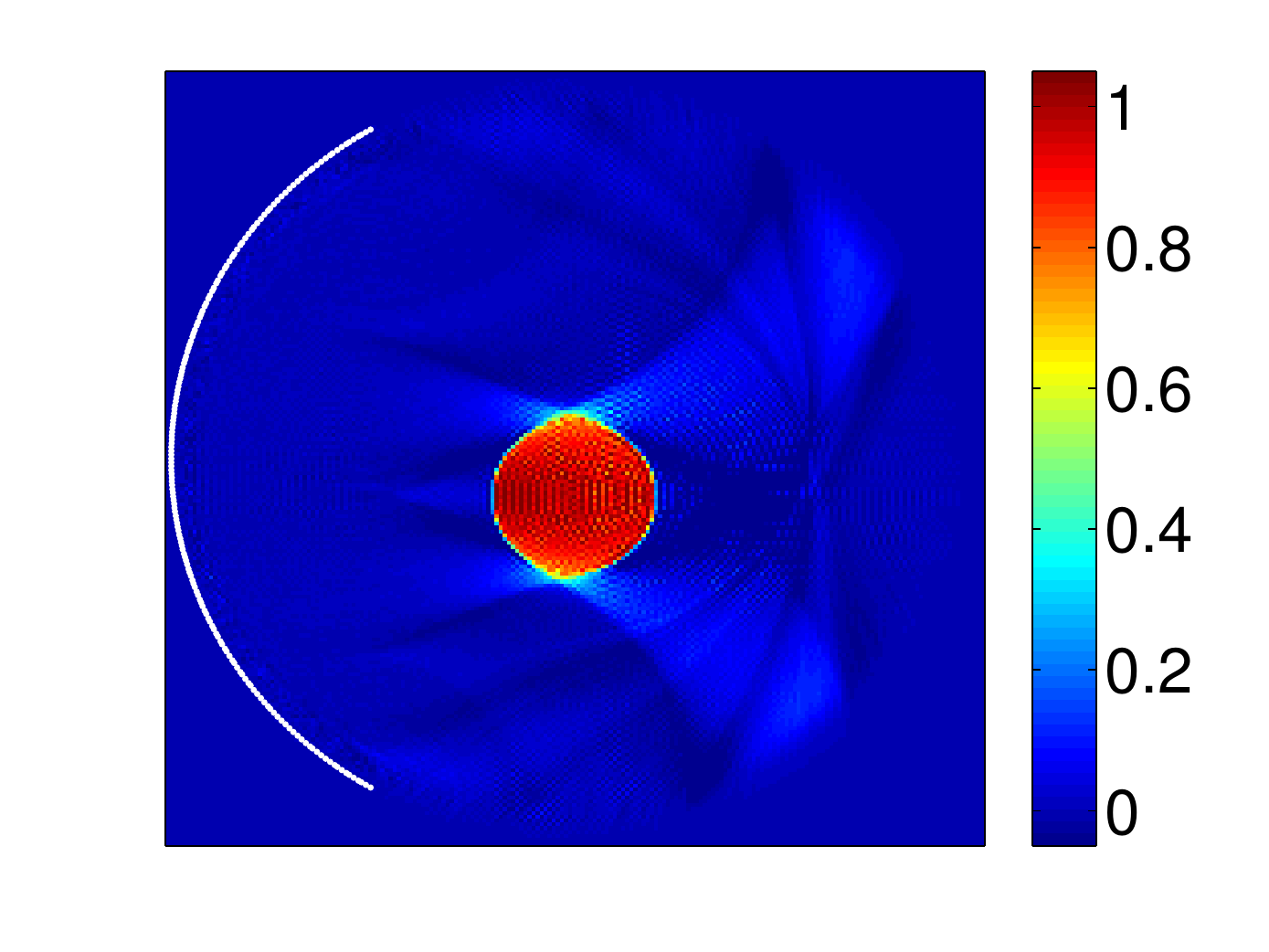}
\includegraphics[width =0.3\textwidth]{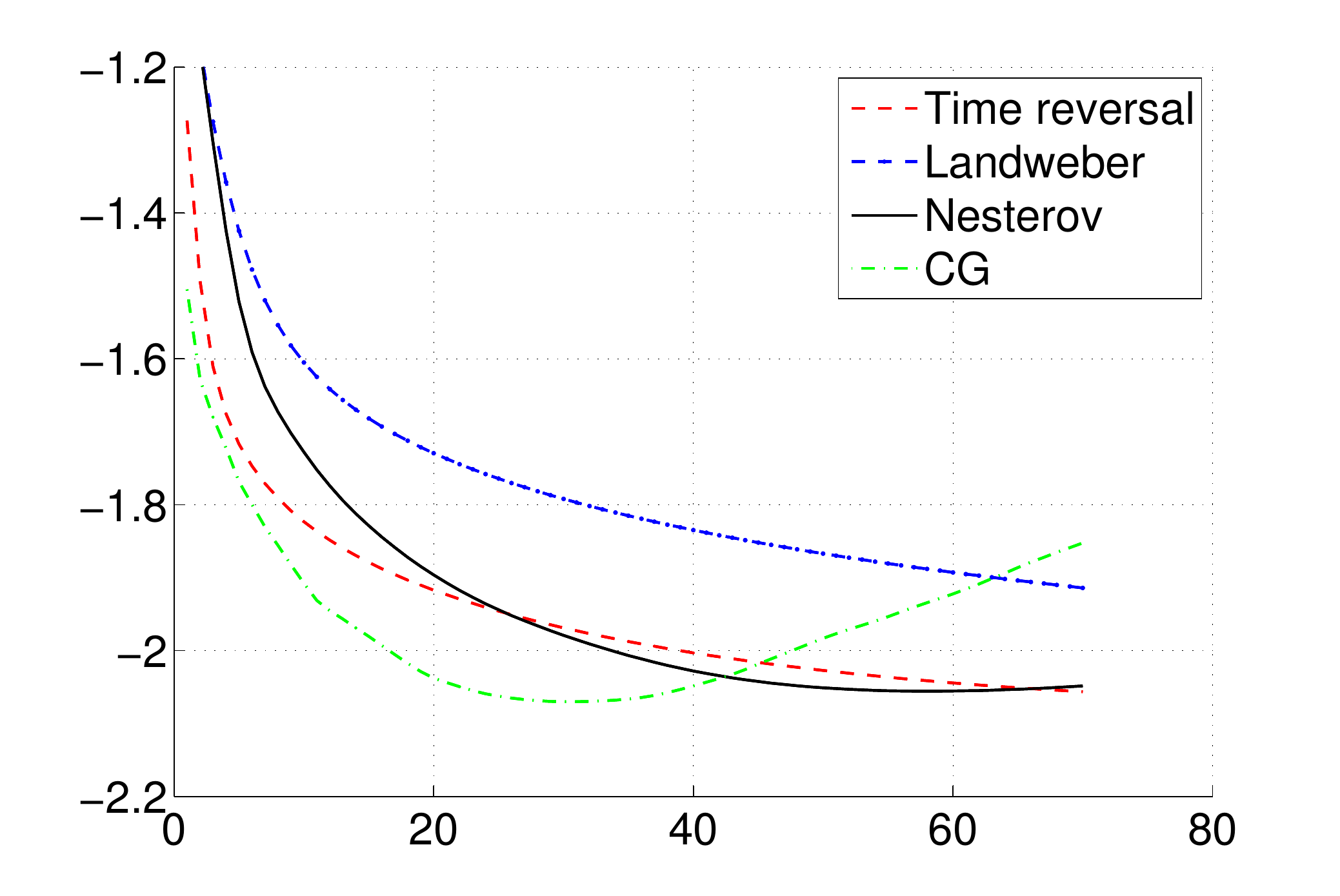}
\includegraphics[width =0.3\textwidth]{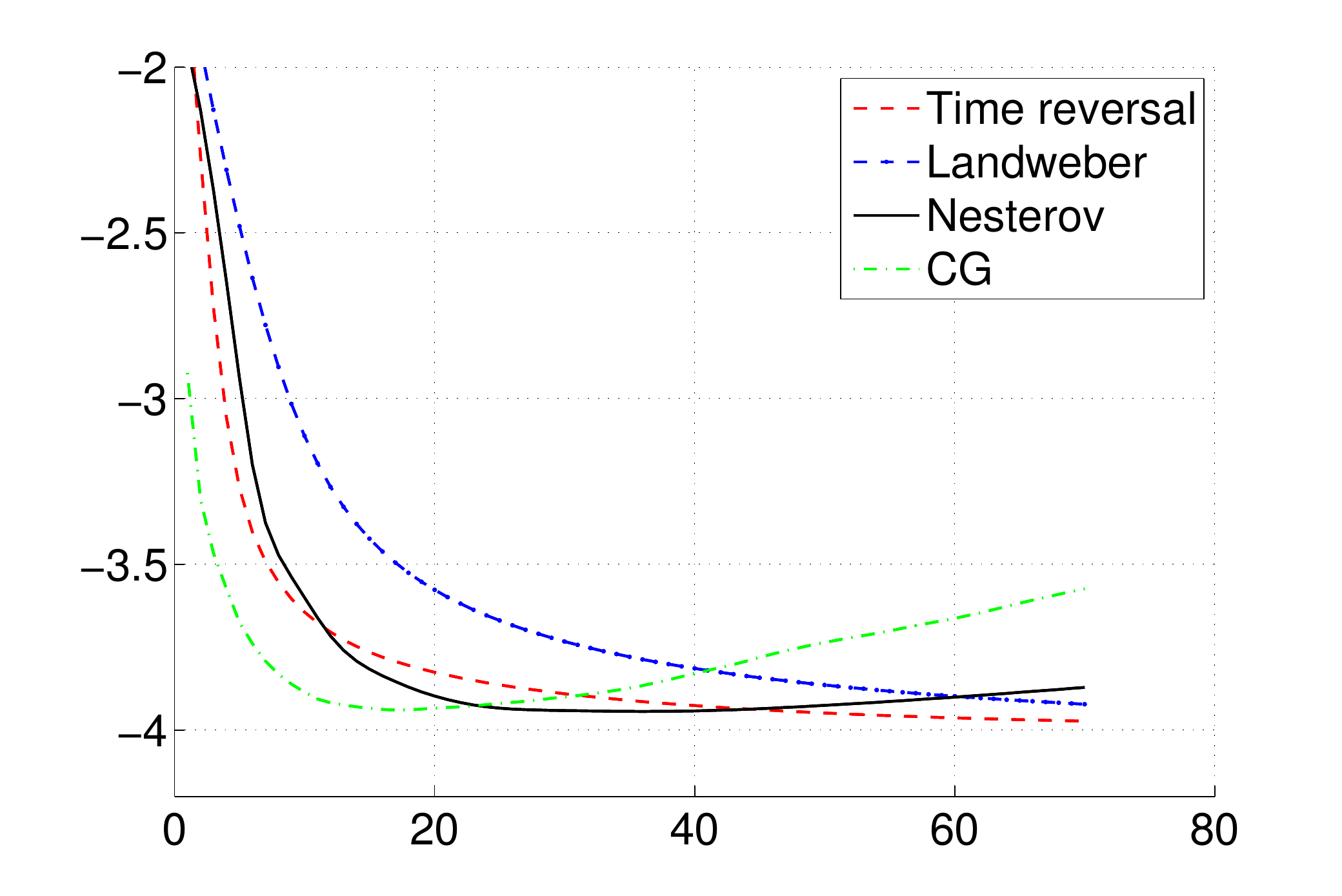}
\caption{{\scshape Test case~\ref{t3}, inexact data.} Top left: Iterative time reversal,  Top center: Landweber's method,
top right: Nesterov's method. Bottom left: the CG method (all after 30 iterations). Bottom center: squared error.
Bottom right: squared residuals.\label{fig:test3noisy}}
\end{figure}

\subsection{Test case \ref{t3}:  Partial data, invisible phantom}
\label{sec:test3}

In this case we investigate  the ill-posed problem, since the invisibility condition holds. We use again the sound speed given in \eqref{eq:ss-nontrapping}
and take $N=200$, $R=1$, $T=1.5$ and $M=800$.
The phantom is shown in the  top left image  in Figure
\ref{fig:test3}.  The partial data are collected on an arc
with opening angle $2\pi/3$ (the measurement curve). 
The invisibility condition holds in this setup.
Rows 2 to 5 in Figure~\ref{fig:test3} show reconstruction results  with  iterative time reversal,
Landweber's, Nesterov's,  and the CG methods after 1, 10 and 200 iterations.
Because the inverse problem is ill-posed no convergence rate results for the Landweber, Nesterov's, and CG methods are available. And indeed one observes that the reconstruction results are worse
compared to the  the case of a completely visible phantom. Again we investigated the  convergence  behavior more carefully. For that purpose in the top row of~\ref{fig:test3} 
we again shows the reconstruction error and the residuals   depending on the iteration index $n$. While the residuals tend to zero quite fast, reconstruction error now decreases much slower than in the previous examples.  Nevertheless also in this situation the CG iteration clearly yields the smallest reconstruction error for a given number of iterations.

Again we repeated the simulations with inexact data where we  simulated the data on a
different grid and further added Gaussian noise to the data.   Due the ill-posedness of the problem we cannot expect complete convergence for noisy data. In fact, as can be seen in Figure~\ref{fig:test3noisy} all iterations show the typical semi-convergence behavior:
The error decreases until a certain optimal index $n^\star$, after which the error starts to increase. Stopping the iteration (for example with Morozovs discrepancy principle) yields approximate but stable solutions. Incorporating additional regularization could  further improve the results. Such investigations, however, are beyond the scope of this paper.

\section{Conclusion and outlook}
In this paper we derived, analyzed, and implemented iterative algorithms for PAT with  variable sound speed. We considered the full and partial data situation. In most of the cases under consideration, Landweber's method performs almost as well as the iterative  time reversal method while Nesterov's and CG method converge faster.   Note that the  semi-convergence of the CG method is visible for all noisy data. The Landweber's and the  iterative  time reversal methods  are asymptotically much slower and therefore also the semi-convergence phenomenon appears later. 
This is also the case for the full data problem; however full data seems to further stabilize the problem above the visible data problem. Especially Landweber's and Nesterov's methods are  convenient for regularization, which we will investigate in an upcoming work.

\section*{Acknowledgement} Linh Nguyen's research is partially supported by the NSF grants DMS 1212125 and DMS 1616904. He also thanks the department of Mathematics of the University of Innsbruck  for financial support and hospitality during his visit in 2016. The authors are thankful to the anonymous referees for various helpful comments and suggestions.

\appendix

\section{Formulation of weak solution} \label{A:weak_soln} 
In this section, we define the (weak) solution $q(x,t)$ in (\ref{E:adjoint}). To that end,  we follow \cite{belhachmi2016direct}. Let us denote by $V$ the Hilbert space with the inner product
$$\left<\varphi_1, \varphi_2 \right> = \int_\Om \varphi_1(x) \, \varphi_2(x) + \int_{\R^n} \nabla \varphi_1(x)  \nabla \varphi_2(x) \, dx,$$
and $V'$ the dual space of $V$. Then, space of test functions $C_0^\infty(\R^n)$ is dense in $V$. Therefore, $V'$ is a subset of the space of distribution $\mD'(\R^n)$. Moreover,
$$V'=\{d \in \mD'(\R^n): \mbox{ there is $C>0$ such that: } |(d,f)| \leq C \|f\|_V,~ \mbox{ for all } f \in C_0^\infty(\R^n)\}.$$ 
Here and elsewhere, $(d,f)$ is the action of the distribution $d$ on the test function $f$. 
\begin{definition} Let $h \in C^\infty( \partial \Om \times (0,T))$. A weak solution of the problem
\begin{eqnarray}\label{E:weak} 
\left\{\begin{array}{l} c^{-2}(x) \, z_{tt}(x,t) - \, \Delta z(x,t) =0,
\quad (x,t) \in (\R^d \setminus \partial \Om) \times (0,T), \\[6 pt] z(x,T) =0, \quad z_t(x,T) =0, \quad x \in \R^d, \\[6 pt]
\big[ z \big](y,t) =0, \Big[\frac{\partial z}{\partial \nu} \Big](y,t) =h (y,t),  \quad (y,t) \in \partial \Om \times [0,T]. \end{array} \right.
\end{eqnarray}
is a function $z$ that satisfies:
\begin{itemize}
\item[1.] $z \in L^2(0,T;V), z' \in L^2(0,T; L^2(\R^n)), z'' \in L^2(0,T;V')$,
\item[2.] $z(T)=z'(T)=0$,
\item[3.] for any $v \in L^2(0,T; V)$, we have \begin{multline} \label{E:variation} \int_0^T  \left<c^{-2}(\edot) \, z_{tt}(\edot,t),v(\edot,t) \right>_{(V',V)} \, dt + \int_0^T \int_{\R^d} \nabla z(x,t) \, \nabla v(x,t) \,dx \, dt =
\\ - \int_0^T \int_{\partial \Om} h(y,t) \, v(y,t) \, dy \, dt.
\end{multline}
\end{itemize}
\end{definition}
 Its existence and uniqueness can be found in \cite{belhachmi2016direct}. 

Assume that $v \in C^\infty(\R^d \times \overline \R)$ such that $v(\edot,t) \in C_0^\infty(\R^d)$ for all $t \in \overline \R$. Taking integration by parts of the second term of the left hand side with respect to $x$, we can write (\ref{E:variation}) in the form
\begin{equation*}  \int_0^T  \big(c^{-2} \, z_{tt}(\edot,t) -\Delta z(\edot, t) ,v(\edot,t) \big) \, dt  = - \int_0^T \int_{\partial \Om} h(y,t) \, v(y,t) \, dy \, dt, \end{equation*}
or 
 \begin{equation*} c^{-2}(\edot) \, z_{tt}(\edot,t) -\Delta z(\edot, t)  = -  \delta_{\partial \Om}(\edot)  \, h(\edot,t).  \end{equation*}
Therefore, (\ref{E:weak}) can be formally rewritten as the nonhomogeneous wave problem
\begin{eqnarray} \label{E:reformg}
\left\{\begin{array}{l} c^{-2}(x) \, z_{tt}(x,t) - \, \Delta z(x,t) = -  \delta_{\pd \Om}(x) \,h(x,t) , \quad (x,t) \in \R^d \times(0,T), \\[6 pt] z(x,T) =0, \quad z_t(x,T) =0, \quad x \in \R^d. \end{array} \right.
\end{eqnarray}

\section{The $k$-space method for numerically solving the wave equation}
\label{sec:kspace}

In this subsection we briefly describe the $k$-space method as we use it to
numerically compute the solution of wave equation,   which is required for evaluating
the forward operator $\Lo$ and its adjoint $\Lo^*$.

Consider the solution $p \colon \R^2  \times (0, T) \to \R$ of the
two-dimensional wave equation
 \begin{align} \label{eq:wave2d}
	&c^{-2}(x) \, p_{tt}(x,t) -   \Delta p(x,t)  = s(x,t)
	&&  \text{ for }(x,t) \in \R^2 \times (0,T)  \,, \\
 	&
	p(x,0)  = f(x)
	&&  \text{ for }  x  \in \R^2  \,,\\
 	&
	p_t(x,0)  = 0
	&&  \text{ for }  x  \in \R^2  \,,
\end{align}
where $s \colon \R^2 \times   (0,T) \to \R$ is a given source term
and $f  \colon \R^2   \to \R$ the given initial pressure.
Several well investigated methods for numerically solving~\eqref{eq:wave2d}
(and analogously for the wave equation in higher
dimensions) are available and  have been used for photoacoustic tomography. This
includes finite difference methods \cite{Burgholzer2007,nguyen2016dissipative,StefanovYang2},
finite element methods \cite{belhachmi2016direct} as well as  Fourier spectral and
$k$-space methods \cite{cox2007k,huang2013full,treeby2010k}.
In this  paper we use a $k$-space method for numerically solving  \eqref{eq:wave2d}
because this method does not suffer from numerical dispersion. The $k$-space method is implemented in the
freely available $k$-wave toolbox (see \cite{treeby2010k}); in order
to be flexible in our implementations  we have developed
our own code as described below.

The $k$-space method  makes the ansatz (see \cite{cox2007k,mast2001k,tabei2002k})
\begin{equation} \label{eq:waveK1}
p(x,t) =  w(x,t) - v(x,t) \quad  \text{ for }(x,t) \in \R^2 \times (0,T) \,,
\end{equation}
where the pressure $p$ is written as linear combination of the
auxiliary quantities  $ w \coloneqq  c_0^2/c^2  \, p $ and $v  \coloneqq (1-c_0^2/c^2)  \, p $.
Here $c_0>0$ a suitable constant; we take  $c_0  \coloneqq  \max \set{ c(x) \colon x \in \R^2}$.
One easily verifies that the wave \eqref{eq:wave2d} is equivalent to the following
system of equations,
\begin{equation} \label{eq:waveK}
\left\{
\begin{aligned}
	w_{tt}(x,t) -  c_0^{2} \, \Delta w(x,t) & =
	c_0^2 \, s(x,t) - c_0^2 \, \Delta v(x,t) &&\text{ for }(x,t) \in \R^2 \times (0,T) \,,\\
	v(x,t) &= \frac{c(x)^2 - c_0^2}{c_0^2}  \, w(x,t) &&  \text{ for }(x,t) \in \R^2 \times (0,T)\,.
\end{aligned} \right.
\end{equation}
Interpreting $\Delta v$ as an additional source term, the first equation in \eqref{eq:waveK} is a standard wave equation
with  constant sound speed. This suggests the time stepping  formula
 (see \cite{cox2007k,mast2001k})
\begin{multline} \label{eq:timestep}
w(x,t + h_t)
=2  w(x,t)  -   w(x,t - h_t)
\\
- 4  \Fo_\xi^{-1} \left[  \sin(c_0 \sabs{\xi} h_t/2)^2 \Fo_x [w(x,t) - v(x,t)]     -
(c_0h_t/2)^2  \sinc(c_0 \sabs{\xi} h_t/2)^2   \Fo_x [s(x,t) ]  \right]
 \,,
\end{multline}
where $\Fo_x$ and $\Fo_\xi^{-1}$ denote  the Fourier and inverse Fourier transform in the spatial variable $x$ and the spatial frequency variable $\xi$, respectively, and  $h_t > 0$ is a time stepping size.  
 Note that the factor $ 4 \sinc(c_0 \sabs{\xi} h_t/2)^2$ is 
 a distinctive feature of the $k$-space method and replaces the factor
 $ (c_0 \sabs{\xi} h_t)^2$ arising in standard finite differences.  For  constant sound speed we have   $v=0$, in which case the solution of equation~\eqref{eq:wave2d} exactly satisfies \eqref{eq:timestep}  (see, e.g., \cite{cox2007k}).
In the case of variable sound speed there is no such equivalence  because $v$ is itself dependent
on $w$. Nevertheless, in any case  \eqref{eq:timestep} serves as the basis of an efficient and   accurate iterative time stepping scheme  for numerically computing the solution of  the wave equation.

The resulting $k$-space method for  solving~\eqref{eq:wave2d}
is summarized in Algorithm~\ref{alg:kpace}.

\begin{alg}[The $k$-space\label{alg:kpace} method]
For given initial pressure $f(x)$ and source  term $s(x,t)$
approximate  the solution  $p(x,t)$ of \eqref{eq:wave2d} as follows:
\begin{enumerate}[leftmargin=3em,label= (\arabic*)]
\item\label{k1} Define  initial  conditions 
$w(x,-h_t) = w(x,0)  = v(x,0) =  c_0^2/c^2   f(x)$;
\item\label{k2} Set $t = 0$;
\item\label{k3} Compute $w(x,t + h_t)$ by evaluating \eqref{eq:timestep};
\item\label{k4} Compute $v(x,t + h_t) \coloneqq  
\kl{c^2(x)/c_0^2- 1}
  \, w(x,t+h_t)$;
\item\label{k5} Compute  $p(x,t + h_t) \coloneqq w(x,t + h_t) -  v(x,t + h_t)$;
\item\label{k6} Set $t \gets t+h_t $ and go back to \ref{k3}.
\end{enumerate}
\end{alg}

 Algorithm~\ref{alg:kpace} can directly be used to evaluate the
forward operator $\Lo f$ by taking  $s(x,t) =0$ and  restricting
the solution to the measurement surface $S_R$, that is $\Lo f = p|_{S_r \times (0, T)}$.
Recall that the  adjoint operator is given by $\Lo^*g  = q_t(\edot,0)$,
where $q\colon \R^2 \times (0,T) \to \R$ satisfies the adjoint wave equation
 \begin{align} \label{eq:wave2ad}
	&  c^{-2}(x) \, q_{tt}(x,t) -    \Delta q(x,t)  = -    \delta_{S_R}(x) \, g(x,t)
	&&  \text{ for }(x,t) \in \R^2 \times (0,T) \\
 	&
	q_t(x,T) = q(x,T)  = 0
	&&  \text{ for }  x  \in \R^2  \,.
\end{align}
By substituting $t \gets T- t$  and taking $s(x,t)  =    g(x,T-t) \,  \delta_{S}(x)$
as source term in \eqref{eq:wave2d}, Algorithm~\ref{alg:kpace}
can also be used to evaluate the $\Lo^*$. In the partial data case where
measurements are made on a subset $S \subsetneq S_R$ only, the
adjoint  can be implemented by taking the source
 $s(x,t)  =  \chi(x,t) \,  g(x,T-t)\, \delta_{S_R}(x)$ with an
appropriate window function $\chi(x,t)$.
In order to   use all available data, in  our implementations
we  take  the window function to be equal to one on the observation
part $S$ and zero outside.  This choice of the window function is known to create streak artifacts
into the picture \cite{frikel2015artifacts,sima,BFNguyen}. However, the artifacts fade away quickly after several iterations when the problem is well-posed.

\end{document}